\newtheorem{thm}{Theorem}[section]
\newtheorem{lem}[thm]{Lemma}
\newtheorem{cor}[thm]{Corollary}
\newtheorem{pro}[thm]{Proposition}
\newtheorem{ex}[thm]{Example}
\newtheorem{rmk}[thm]{Remark}
\newtheorem{defi}[thm]{Definition}
\newcommand {\emptycomment}[1]{}
\newcommand{\be }{\begin{equation}}
\newcommand{\ee }{\end{equation}}
\newcommand{\huaA}{\mathcal{A}}
\newcommand{\huaL}{\mathcal{L}}
\newcommand{\huaR}{\mathcal{R}}
\newcommand{\huaP}{\mathcal{P}}
\newcommand{\huaI}{\mathcal{I}}
\newcommand{\huaO}{{\mathcal{O}}}
\newcommand{\frkd}{\mathfrak d}
\newcommand{\frkr}{\mathfrak r}
\newcommand{\frkB}{\mathfrak B}
\newcommand{\frkL}{\mathfrak L}
\newcommand{\Id}{{\rm{Id}}}
\newcommand{\br}[1]{   [ \cdot,    \cdot  ]   }
\newcommand{\End}{\mathrm{End}}
\newcommand{\ad}{\mathrm{ad}}
\newcommand{\Img}{\mathrm{Im}}
\begin{document}

\title{Zinbiel bialgebras, relative Rota-Baxter operators and the related Yang-Baxter Equation}

\author{You Wang}
\address{Department of Mathematics, Jilin University, Changchun 130012, Jilin, China}
\email{wangyou20@mails.jlu.edu.cn}


\begin{abstract}
  In this paper, we first introduce the notion of a Zinbiel bialgebra and show that Zinbiel bialgebras, matched pairs of Zinbiel algebras and Manin triples of Zinbiel algebras are equivalent. Then we study the coboundary Zinbiel bialgebras, which leads to an analogue of the classical Yang-Baxter equation. Moreover, we introduce the notions of quasi-triangular and factorizable Zinbiel bialgebras as special cases. A quasi-triangular Zinbiel bialgebra can give rise to a relative Rota-Baxter operator of weight $-1$. A factorizable Zinbiel bialgebra can give a factorization of the underlying Zinbiel algebra.
  As an example, we define the Zinbiel double of a Zinbiel bialgebra, which enjoys a natural factorizable Zinbiel bialgebra structure. Finally, we introduce the notion of quadratic Rota-Baxter Zinbiel algebras, as the Rota-Baxter characterization of factorizable Zinbiel bialgebras. We show that there is a one-to-one correspondence between quadratic Rota-Baxter Zinbiel algebras and factorizable Zinbiel bialgebras.
\end{abstract}

\keywords{Zinbiel algebras, Zinbiel bialgebras, the classical Zinbiel Yang-Baxter Equation,  quasi-triangular Zinbiel bialgebras, factorizable Zinbiel bialgebras}

\maketitle

\tableofcontents

\allowdisplaybreaks


\section{Introduction}
The purpose of this paper is to establish the bialgebra theory for Zinbiel algebras. Moreover, we also develop the quasi-triangular and factorizable theories for Zinbiel bialgebras.

\subsection{Zinbiel algebras}

Zinbiel algebras (or dual Leibniz algebras) was initiated by Loday (\cite{Loday}). The cup product on the cohomology group of a Leibniz algebra forms a Zinbiel algebra structure.
It is also known as pre-commutative algebras, Tortkara algebras or chronological algebras. Zinbiel algebras are koszul dual to Leibniz algebras \cite{Balavoine} and   a differential graded algebra was also  constructed in \cite{Balavoine} to obtain the (co)homology of Zinbiel algebras by operad theory.
Zinbiel algebras appeared in many fields of mathematics and mathematical physics, such as
operad theory (\cite{Balavoine,GK,LV}), rack cohomology (\cite{CFLM}), Leibniz algebras and Leibniz cohomology (\cite{Loday,Balavoine,MS,Saha}), deformation theory (\cite{Yau}) and multiple zeta values (\cite{Chapoton}). The classification of low dimensional Zinbiel algebras was investigated in \cite{AOK,ALO}. More recent studies on Zinbiel algebras can be seen in \cite{Tower2,CFLM,GGZ,Tower1}.

There is a close relationship among Lie algebras, associative algebras, commutative algebras, pre-Lie algebras, dendriform algebras and Zinbiel algebras as follows (in the sense of commutative diagram of categories). See \cite{Aguiar,Bai} for more details.
\begin{equation*}
{\footnotesize
\xymatrix@C=18ex{
*+[F]\txt{\rm{~Zinbiel~algebras}\\~$(A,\circ)$} \ar_{x\ast y=x\circ y+y\circ x}[d] \ar^{x\succ y=y\prec x=x\circ y}[r] & *+[F]\txt{\rm {dendriform~algebras}\\~$(A,\prec,\succ)$} \ar_{x\ast y=x\prec y+x\succ y}[d] \ar^{\quad x\star y=x\succ y-y\prec x}[r]   & *+[F]\txt{{\rm preLie~algebras}\\~$(A,\star)$} \ar_{[x,y]=x\star y-y\star x}[d]   \\
*+[F]\txt{\rm{commutative~algebras}\\~$(A,\ast)$} \ar^{\quad{\rm forget~commutativity}}[r]  & *+[F]\txt{\rm{associative~algebras}\\~$(A,\ast)$} \ar^{[x,y]=x\ast y-y\ast x}[r]  & *+[F]\txt{\rm{Lie~algebras}\\~$(A,[\cdot,\cdot])$}
}
}
\end{equation*}

\subsection{Zinbiel bialgebras}

A bialgebra theory for an algebraic structure usually contains two same algebraic structures on a vector space $A$ and its dual space $A^*$ such that certain compatibility conditions hold. Usually, certain matched pairs, Manin triples and bialgebras are equivalent. There are many important bialgebras, such as Lie bialgebras (\cite{Kosmann}), Hopf algebras (\cite{Mon}), and infinitesimal bialgebras (\cite{Aguiar3,Bai2,Zhe}). 
In the context of Lie bialgebras, classical $r$-matrices give rise to  quasi-triangular Lie bialgebras (\cite{STS}). Moreover, factorizable Lie bialgebras, which  are special quasi-triangular Lie bialgebras,  have important applications in integrable systems (\cite{RS,S2}).

In this paper, we establish the bialgebra theory for Zinbiel algebras. We introduce the notions of matched pairs, Manin triples of Zinbiel algebras and Zinbiel bialgebras, and prove the equivalence among them. The main innovation is that we use skew-symmetric invariant bilinear form instead of symmetric invariant bilinear form in the definition of a quadratic Zinbiel algebras. Another ingredient is that the dual representation of a representation $(V;\rho,\mu)$ should be $(V^{\ast};-\rho^{\ast}-\mu^{\ast},\mu^{\ast})$ rather than $(V^{\ast};\mu^{\ast},\rho^{\ast})$.
We also introduce the notion of coboundary Zinbiel bialgebras, which leads to an analogue of the classical Yang-Baxter equation. Then we introduce the notions of quasi-triangular Zinbiel bialgebras and factorizable Zinbiel bialgebras. As an important example, we show that the Zinbiel double of a Zinbiel bialgebra naturally enjoys a factorizable Zinbiel bialgebra structure.

Note that the notion of dendriform D-bialgebras was introduced by Bai in the pioneer work \cite{Bai2}, which serves as the bialgebra for dendriform algebras. As aforementioned,  Zinbiel algebras can be viewed as special dendriform algebras. Thus one can obtain a bialgebra theory for Zinbiel algebras through the restriction of the results in \cite{Bai2} to the category of Zinbiel algebras. However, on the one hand, it is necessary to give explicit formulas for such structures due to the importance of Zinbiel algebras; On the other hand, there are differences in the coboundary case and associated analogue of the Yang-Baxter equation. In \cite{Bai2},   a strong condition $r_{\prec}=r, r_{\succ}=-\sigma(r)$ for some $2$-tensor $r$ is needed. So the analogue of the Yang-Baxter equation in this paper is different from the one given in \cite{Bai2}. Moreover, we also develop  quasi-triangular and factorizable theories for Zinbiel bialgebras, and these are totally new.

\subsection{Rota-Baxter characterization of factorizable Zinbiel bialgebras}

The notion of Rota-Baxter algebras was initiated by G. Baxter (\cite{Ba}) in his probability study to understand Spitzers identity in fluctuation theory. Rota-Baxter operators are broadly connected with mathematical physics, including the application of Connes-Kreimer's algebraic approach in the renormalization of perturbative quantum field theory (\cite{CK}). Rota-Baxter algebras are also closely related to double algebras (\cite{G3,Goncharov}), see \cite{Guo} for more details.  In the Lie algebra context, a Rota-Baxter operator was introduced independently in the 1980s as the operator form of the classical Yang-Baxter equation  (\cite{STS}). In order to better understand the relationship between the classical Yang-Baxter equation and the related integrable systems, Kupershmidt introduced an $\huaO$-operator on a Lie algebra (\cite{Ku}) as a more general notion.

Note that Rota-Baxter operators can be used to characterize certain bialgebras.  Goncharov established a correspondence between non-skewsymmetric solutions of the classical Yang-Baxter equation and  Rota-Baxter operators of nonzero weight for certain Lie algebras (\cite{G1,G2}). See \cite{BGN} for more general study. In \cite{Lang}, it showed that there is a one-to-one correspondence between factorizable Lie bialgebras and quadratic Rota-Baxter Lie algebras of nonzero weight. In \cite{SW}, factorizable antisymmetric infinitesimal bialgebras are characterized by symmetric Rota-Baxter Frobenius algebras.

In this paper, we also give the Rota-Baxter characterization of factorizable Zinbiel bialgebras. We introduce the notion of quadratic Rota-Baxter Zinbiel algebras of weight $\lambda$ and show that there is a one-to-one correspondence between factorizable Zinbiel bialgebras and quadratic Rota-Baxter Zinbiel algebras of nonzero weight. We also prove that quadratic Rota-Baxter Zinbiel algebras and Rota-Baxter commutative algebras with Connes cocycles are equivalent.

\subsection{Outline of the paper}

The paper is organized as follows. In Section \ref{Zinbiel-bialgebras}, we introduce the notions of matched pairs, Manin triples of Zinbiel algebras and Zinbiel bialgebras, and prove the equivalence among them. In Section \ref{sec:coboundary}, we study   coboundary Zinbiel bialgebras, which leads to an analogue of the classical Yang-Baxter equation. In Section \ref{sec:factorizable}, we introduce the notions of quasi-triangular and factorizable Zinbiel bialgebras. A quasi-triangular Zinbiel algebra can give rise to a relative Rota-Baxter operator of weight $-1$.  A factorizable Zinbiel bialgebra can give a factorization of the underlying Zinbiel algebra. We show that the Zinbiel double of a Zinbiel bialgebra naturally has a factorizable Zinbiel bialgebra structure. In Section \ref{sec:quadratic-RB}, we introduce the notion of quadratic Rota-Baxter Zinbiel algebras and show that there is a one-to-one correspondence between factorizable Zinbiel bialgebras and quadratic Rota-Baxter Zinbiel algebras. 

Through out this paper, we work over a field $\mathbb{K}$ and all the vector spaces and algebras are over $\mathbb{K}$ and finite-dimensional.

\vspace{2mm}
\noindent
{\bf Acknowledgements.} We give our warmest thanks to Yunhe Sheng and Rong Tang for helpful discussions.
 This research is supported by NSFC (12471060,  W2412041).

\section{Quadratic Zinbiel algebras and Zinbiel bialgebras}\label{Zinbiel-bialgebras}

In this section, we introduce the notion of a matched pair of Zinbiel algebras. Then we introduce the notions of a Manin triple of Zinbiel algebras and a Zinbiel bialgebra. Finally, we prove that these objects are equivalent.

\subsection{Preliminaries and basic results}
First we recall the representations and some basic results of Zinbiel algebras.

\begin{defi}{\rm(\cite{Aguiar,Loday})}
A (left) {\bf Zinbiel algebra} $(A,\circ_A)$ is a vector space $A$ together with a bilinear operation $\circ_A:A \otimes A \to A$ such that
\begin{equation}\label{Zin-identity}
x\circ_A (y \circ_A z)=(x \circ_A y+y\circ_A x)\circ_A z, \quad \forall x,y,z \in A,
\end{equation}
or equivalently,
$$  (x,y,z)=(y\circ_A x)\circ_A z, \quad \forall x,y,z \in A, $$
where $(x,y,z)=x\circ_A (y \circ_A z)-(x \circ_A y)\circ_A z$ is the associator.
\end{defi}

\emptycomment{
\begin{rmk}
We can also define a right Zinbiel algebra $(A,\bullet_A)$ satisfying the following identity:
$$ (x\bullet_A y) \bullet_A z=x \bullet_A (y \bullet_A z+z \bullet_A y), \quad \forall x,y,z \in A.  $$
Obviously, the opposite algebra of a left Zinbiel algebra is the right Zinbiel algebra under the same vector space. Actually, right Zinbiel algebras (also called dual Leibniz algebras) are koszul dual to the (left) Leibniz algebras. More details can be seen in \cite{Balavoine,Loday}.
\end{rmk}
}

Let $(A,\circ_A)$ be a Zinbiel algebra. Then the bilinear operation $\ast_A:A \otimes A \to A$ defined by
\begin{equation}\label{Zin-com}
x \ast_A y=x \circ_A y+y\circ_A x, \quad \forall  x,y \in A
\end{equation}
defines a commutative associative algebra $(A,\ast_A)$, which is called the {\bf sub-adjacent commutative associative algebra} of $(A,\circ_A)$ and denoted by $A^c$. Recall that a representation of a commutative associative algebra $(A,\ast_A)$ is a pair $(V,\zeta)$, where $V$ is a vector space and $\zeta:A \to \End(V)$ is a linear map satisfying:
\begin{equation}
\zeta(x\ast_A y)=\zeta(x)\zeta(y),\quad \forall x,y\in A.
\end{equation}
In this situation, $L:A \to \End(A)$ defined by $L_x(y)=x\circ_A y$ gives a representation of the commutative associative algebra $A^c$ on $A$.

\begin{pro}
Every Zinbiel algebra $(A,\circ_A)$ satisfies the equation:
\begin{equation}\label{left-com}
x\circ_A (y \circ_A z)=y\circ_A (x\circ_A z), \quad \forall x,y,z \in A.
\end{equation}
\end{pro}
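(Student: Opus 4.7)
The proof is a one-line symmetry observation, so the plan is very short. I would start from the Zinbiel identity \eqref{Zin-identity}:
\[
x\circ_A (y \circ_A z)=(x \circ_A y+y\circ_A x)\circ_A z.
\]
The key remark is that the expression $x\circ_A y+y\circ_A x$ on the right-hand side is manifestly symmetric in the arguments $x$ and $y$. Consequently, the entire right-hand side $(x \circ_A y+y\circ_A x)\circ_A z$ is symmetric under the exchange $x\leftrightarrow y$.

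Therefore the left-hand side $x\circ_A (y\circ_A z)$ must also be invariant under swapping $x$ and $y$, which yields exactly \eqref{left-com}:
\[
x\circ_A (y\circ_A z)=y\circ_A (x\circ_A z).
\]
There is no real obstacle here; the statement is essentially a rephrasing of the fact that the sub-adjacent product $\ast_A$ defined in \eqref{Zin-com} is commutative, combined with the Zinbiel identity written in the form $L_x L_y=L_{x\ast_A y}$. Equivalently, this says that the left multiplication map $L:A\to\End(A)$, $L_x(y)=x\circ_A y$, gives a representation of the commutative associative algebra $A^c$, so the operators $L_x$ and $L_y$ commute on every element $z\in A$.
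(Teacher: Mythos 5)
Your argument is correct and is exactly the ``direct deduction from \eqref{Zin-identity}'' that the paper's one-line proof alludes to: the right-hand side of the Zinbiel identity is symmetric in $x$ and $y$, so the left-hand side must be as well. No issues.
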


\begin{proof}
It can be deduced directly from the equation \eqref{Zin-identity}.
\end{proof}

\emptycomment{
\begin{pro}
Let $(A,\prec,\succ)$ be a dendriform algebra. Then $(A,\circ)$ is a Zinbiel algebra, where $\circ$ is defined by
$$ x\circ y=x \prec y=y \succ x, \quad \forall x,y \in A.$$
\end{pro}

\begin{proof}
It can be obtained obviously from the definition of dendriform algebras.
\end{proof}
}

\begin{defi}
A {\bf representation} of a Zinbiel algebra $(A,\circ_A)$ is a triple $(V;\rho,\mu)$, where $V$ is a vector space and $\rho,\mu:A \to \End(V)$ are linear maps such that the following equations hold for all $x,y \in A$
\begin{eqnarray}
\label{rep1}&&\rho(x)\rho(y)=\rho(x\circ_A y)+\rho(y\circ_A x);\\
\label{rep2}&&\rho(x)\mu(y)=\mu(x\circ_A y)=\mu(y)\rho(x)+\mu(y)\mu(x);
\end{eqnarray}
\end{defi}

In fact, $(V;\rho,\mu)$ is a representation of a Zinbiel algebra $(A,\circ_A)$ if and only if the direct sum $A\oplus V$ of vector spaces is a Zinbiel algebra (the semi-direct product) by defining the multiplication $\circ_{(\rho,\mu)}$ on $A\oplus V$ by
$$ (x+u)\circ_{(\rho,\mu)}(y+v)=x \circ_A y+\rho(x)v+\mu(y)u, \quad \forall x,y\in A,u,v\in V.$$
We denote it by $A\ltimes_{\rho,\mu} V$ or simply by $A\ltimes V$.

\begin{ex}{\rm
Let $(A,\circ_A)$ be a Zinbiel algebra.  Then $(A;L,R)$ is a representation of $(A,\circ_A)$, which is called the {\bf regular representation}, where $R:A\to \End(A)$ is defined by $R_{x}(y):=y\circ_A x$, for all $x,y\in A$. If there is a Zinbiel algebra structure on the dual space $A^{\ast}$, we denote the left and right multiplications by $\huaL$ and $\huaR$ respectively.
}
\end{ex}

Now we investigate the dual representation in the Zinbiel algebra context, in order to relate the matched pairs of Zinbiel algebras to Zinbiel bialgebras and Manin triples of Zinbiel algebras later.

\begin{pro}\label{dual-rep}
Let $(V;\rho,\mu)$ be a representation of a Zinbiel algebra $(A,\circ_A)$. Then
$$(V^{\ast};-\rho^{\ast}-\mu^{\ast},\mu^{\ast})$$
is a representation of $(A,\circ_A)$, which is called the {\bf dual representation} of $(V;\rho,\mu)$.
\end{pro}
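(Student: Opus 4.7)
The plan is a direct verification of \eqref{rep1} and the two equations in \eqref{rep2} for the pair $\rho'\defbe-\rho^{\ast}-\mu^{\ast}$, $\mu'\defbe\mu^{\ast}$ on $V^{\ast}$. First I would record the three dualized identities that will do all the work: starring $\rho(y)\rho(x)=\rho(x\circ_A y)+\rho(y\circ_A x)$ from \eqref{rep1} (with $x,y$ swapped), $\rho(x)\mu(y)=\mu(x\circ_A y)$ from the first half of \eqref{rep2}, and $\mu(y)\rho(x)+\mu(y)\mu(x)=\mu(x\circ_A y)$ from the second half. The only subtlety is the order-reversal $(fg)^{\ast}=g^{\ast}f^{\ast}$ when passing compositions to the dual.

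Next I would check \eqref{rep1} for $(\rho',\mu')$. Expanding $\rho'(x)\rho'(y)=(\rho^{\ast}(x)+\mu^{\ast}(x))(\rho^{\ast}(y)+\mu^{\ast}(y))$ into four cross-terms, the diagonal $\rho^{\ast}(x)\rho^{\ast}(y)$ yields $\rho^{\ast}(x\circ_A y)+\rho^{\ast}(y\circ_A x)$; the cross-term $\mu^{\ast}(x)\rho^{\ast}(y)$ yields $\mu^{\ast}(y\circ_A x)$ by the dualized first half of \eqref{rep2} with variables swapped; and the pair $\rho^{\ast}(x)\mu^{\ast}(y)+\mu^{\ast}(x)\mu^{\ast}(y)$ collapses to $\mu^{\ast}(x\circ_A y)$ by the dualized second half of \eqref{rep2}. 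Reassembling reproduces exactly $\rho'(x\circ_A y)+\rho'(y\circ_A x)$.

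Then I would verify \eqref{rep2} in two halves. For $\rho'(x)\mu'(y)=\mu'(x\circ_A y)$, expansion of $(-\rho^{\ast}(x)-\mu^{\ast}(x))\mu^{\ast}(y)$ and a single application of the dualized second half of \eqref{rep2} gives the identity. For $\mu'(y)\rho'(x)+\mu'(y)\mu'(x)=\mu'(x\circ_A y)$, the two copies of $\mu^{\ast}(y)\mu^{\ast}(x)$ cancel and the surviving $-\mu^{\ast}(y)\rho^{\ast}(x)$ is identified with $\mu^{\ast}(x\circ_A y)$ through the dualized first half of \eqref{rep2}.

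The main obstacle is purely book-keeping: the order-reversal $(fg)^{\ast}=g^{\ast}f^{\ast}$ together with the various minus signs has to be tracked precisely, and one sees that the additional $-\mu^{\ast}$ contribution inside $\rho'$ is exactly what is needed to absorb the interaction term $\mu(y)\mu(x)$ appearing in the second half of \eqref{rep2}. Without this correction — for the naive guess $(V^{\ast};\mu^{\ast},\rho^{\ast})$ singled out in the introduction — both \eqref{rep1} and the second identity in \eqref{rep2} break, which is precisely the point the paper is emphasising.
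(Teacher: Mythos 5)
Your strategy---dualize the three defining identities \eqref{rep1}--\eqref{rep2} and match terms in the four-fold expansion of $\rho'(x)\rho'(y)$---is exactly the paper's proof, and the structural points you isolate (the cancellation of the two $\mu^*(y)\mu^*(x)$ terms, the extra $-\mu^*$ in $\rho'$ absorbing the interaction term $\mu(y)\mu(x)$) are all correct. However, the signs in your first verification do not close as written. The paper's convention, visible in the coregular representation, is $\langle\rho^*(x)\xi,v\rangle=-\langle\xi,\rho(x)v\rangle$; with this signed adjoint each dualized identity acquires a global minus sign, namely $\rho^*(x)\rho^*(y)=-\rho^*(x\circ_A y)-\rho^*(y\circ_A x)$, $\mu^*(x)\rho^*(y)=-\mu^*(y\circ_A x)$ and $\rho^*(x)\mu^*(y)+\mu^*(x)\mu^*(y)=-\mu^*(x\circ_A y)$. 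As you state them (without the minus signs) the three contributions sum to $(\rho^*+\mu^*)(x\circ_A y+y\circ_A x)$, which equals $-\rho'(x\circ_A y)-\rho'(y\circ_A x)$, the negative of what is required; moreover your own third verification uses the correctly signed identity $\mu^*(y)\rho^*(x)=-\mu^*(x\circ_A y)$, which contradicts the unsigned version invoked in the first. (The convention is not optional: with the unsigned adjoint, \eqref{rep1} for $(-\rho^*-\mu^*,\mu^*)$ fails by an overall sign, so the proposition itself would be false.) Once all three dualized identities carry their minus signs, the four terms do reassemble to $\rho'(x\circ_A y)+\rho'(y\circ_A x)$, and the remainder of your argument goes through as described.
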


\begin{proof}
By \eqref{rep1} and \eqref{rep2}, for all $x,y,z\in A$ and $\xi\in A^*$, we have
\begin{eqnarray*}
&&\langle \Big((-\rho^*-\mu^*)(x)(-\rho^*-\mu^*)(y)-(-\rho^*-\mu^*)(x\circ_A y+y\circ_A x)\Big)\xi,z \rangle\\
&=&\langle \xi,\rho(y)\rho(x)z+ \mu(y)\rho(x)z+\rho(y)\mu(x)z+\mu(y)\mu(x)z   \rangle\\
&&-\langle \xi, \rho(x \circ_A y+y\circ_A x)z   \rangle-\langle \xi, \mu(x \circ_A y+y\circ_A x)z   \rangle\\
&=&0,
\end{eqnarray*}
which implies that $(-\rho^*-\mu^*)(x)(-\rho^*-\mu^*)(y)=(-\rho^*-\mu^*)(x\circ_A y+y\circ_A x)$. Similarly, we can show that
\begin{eqnarray*}
&&\langle \Big((-\rho^*-\mu^*)(x)\mu^*(y)-\mu^*(y)(-\rho^*-\mu^*)(x)-\mu^*(y)\mu^*(x)\Big)\xi,z\rangle\\
&=& \langle \xi,-\mu(y)\rho(x)z-\mu(y)\mu(x)z+\rho(x)\mu(y)z+\mu(x)\mu(y)z-\mu(x)\mu(y)z\rangle\\
&=& 0,
\end{eqnarray*}
and
\begin{eqnarray*}
&&\langle \Big(\mu^*(x\circ_A y)-\mu^*(y)(-\rho^*-\mu^*)(x)-\mu^*(y)\mu^*(x)\Big)\xi,z\rangle\\
&=& \langle \xi,-\mu(x\circ_A y)z+\rho(x)\mu(y)z+\mu(x)\mu(y)z-\mu(x)\mu(y)z\rangle\\
&=& 0.
\end{eqnarray*}
Thus, $(V^{\ast};-\rho^{\ast}-\mu^{\ast},\mu^{\ast})$ is a representation of $(A,\circ_A)$.
\end{proof}

\begin{ex}{\rm
Let $(A,\circ_A)$ be a Zinbiel algebra. By proposition \ref{dual-rep}, $(A^{\ast};-L^{\ast}-R^{\ast},R^{\ast})$ is a representation of $(A,\circ_A)$, which is called the {\bf coregular representation}, where two linear maps $L^{\ast},R^{\ast}:A\to \End(A^{\ast})$ with $x\to L^{\ast}_x $ and $x\to R^{\ast}_x $ are respectively defined by
$$ \langle L^{\ast}_x (\xi),y \rangle=-\langle \xi, x\circ_A y \rangle, \quad \langle R^{\ast}_x (\xi),y \rangle=-\langle \xi, y\circ_A x \rangle, \quad  \forall x,y\in A, \xi \in A^{\ast}. $$
}
\end{ex}

\subsection{Matched pairs, Manin triples of Zinbiel algebras and Zinbiel bialgebras}

\begin{defi}
Let $(A,\circ_A)$ and $(B,\circ_B)$ be two Zinbiel algebras. If there exists a representation $(\rho,\mu)$ of A on B and a representation $(\rho',\mu')$ of B on A satisfying the following identities:
\begin{eqnarray}
\label{mp-zinbiel-1}u\circ_B(\mu(x)v)+\mu(\rho'(v)x)u&=&\mu(x)(u\circ_B v+v\circ_B u);\\
\label{mp-zinbiel-2}\rho(x)(u \circ_B v)-(\rho(x)u)\circ_B v&=&\rho(\mu'(u)x)v+(\mu(x)u)\circ_B v+\rho(\rho'(u)x)v;\\
\label{mp-zinbiel-3}u\circ_B(\rho(x)v)+\mu(\mu'(v)x)u-(\mu(x)u)\circ_B v&=&(\rho(x)u)\circ_B v+\rho(\mu'(u)x)v+\rho(\rho'(u)x)v;\\
\label{mp-zinbiel-4}x\circ_A(\mu'(u)y)+\mu'(\rho(y)u)x&=&\mu'(u)(x\circ_A y+y\circ_A x);\\
\label{mp-zinbiel-5}\rho'(u)(x \circ_A y)-(\rho'(u)x)\circ_A y&=&\rho'(\mu(x)u)y+(\mu'(u)x)\circ_A y+\rho'(\rho(x)u)y;\\
\label{mp-zinbiel-6}x\circ_A(\rho'(u)y)+\mu'(\mu(y)u)x-(\mu'(u)x)\circ_A y&=&(\rho'(u)x)\circ_A y+\rho'(\mu(x)u)y+\rho'(\rho(x)u)y,
\end{eqnarray}
for all $x,y\in A$ and $u,v\in B$. Then we call $(A,B;(\rho,\mu),(\rho',\mu'))$ a {\bf matched pair of Zinbiel algebras}.
\end{defi}

\begin{pro}\label{mp-equ}
Let $(A,B;(\rho,\mu),(\rho',\mu'))$ be a matched pair of Zinbiel algebras. Then there exists a Zinbiel algebra structure on $A\oplus B$ defined by
\begin{equation}\label{mp-zinbiel}
(x+u)\circ_{\bowtie}(y+v)=x\circ_A y+\rho'(u)y+\mu'(v)x+u\circ_B v+\rho(x)v+\mu(y)u.
\end{equation}
Denote this Zinbiel algebra by $A \bowtie_{\rho',\mu'}^{~\rho,\mu} B,$ or simply by
$A \bowtie B.$

Conversely, if $(A \oplus B,\circ_{\bowtie})$ is a Zinbiel algebra such
that $A$ and $B$ are Zinbiel subalgebras, then
$(A,B;(\rho,\mu),(\rho',\mu'))$ is a matched pair of Zinbiel
algebras, where the representation $(\rho,\mu)$ of $A$ on $B$
and the representation $(\rho',\mu')$ of $B$ on $A$ are
determined  by
$$ x\circ_{\bowtie} u=\rho(x)u+\mu'(u)x,\quad u \circ_{\bowtie} x=\mu(x)u+\rho'(u)x,\quad  \forall x\in A, u\in B. $$
\end{pro}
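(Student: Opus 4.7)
The plan is to verify the Zinbiel axiom \eqref{Zin-identity} for the bracket $\circ_{\bowtie}$ on $A\oplus B$ by expanding both sides for arbitrary triples and matching terms by the component ($A$ or $B$) they live in. For arguments $x+u,\ y+v,\ z+w$ with $x,y,z\in A$ and $u,v,w\in B$, I will split into the eight cases indexed by which of the three slots is in $A$ versus $B$ (pure $AAA$, pure $BBB$, and the six mixed cases). Linearity of $\circ_{\bowtie}$ reduces the general identity to these eight cases.

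The two pure cases are handled directly: the case $(x,y,z)\in A^3$ is exactly \eqref{Zin-identity} for $(A,\circ_A)$, and the case $(u,v,w)\in B^3$ is \eqref{Zin-identity} for $(B,\circ_B)$. For the six mixed cases, I would expand $(x+u)\circ_{\bowtie}\bigl((y+v)\circ_{\bowtie}(z+w)\bigr)$ and $\bigl((x+u)\circ_{\bowtie}(y+v)+(y+v)\circ_{\bowtie}(x+u)\bigr)\circ_{\bowtie}(z+w)$ using \eqref{mp-zinbiel}, collect the $A$-valued and $B$-valued parts separately, and then cancel using: (i) the representation identities \eqref{rep1}--\eqref{rep2} for $(\rho,\mu)$ and for $(\rho',\mu')$, and (ii) one of the six mixed compatibility conditions \eqref{mp-zinbiel-1}--\eqref{mp-zinbiel-6}. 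The natural correspondence I expect is that the cases with exactly one $B$-entry produce three of the six relations and the cases with exactly two $B$-entries produce the other three, each relation arising from equating $A$-components (or $B$-components) in one configuration.

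For the converse, assume $(A\oplus B,\circ_{\bowtie})$ is a Zinbiel algebra with $A,B$ as subalgebras. Define $\rho,\mu:A\to\End(B)$ and $\rho',\mu':B\to\End(A)$ by decomposing the mixed products
\[
x\circ_{\bowtie} u=\rho(x)u+\mu'(u)x,\qquad u\circ_{\bowtie} x=\mu(x)u+\rho'(u)x,
\]
for $x\in A$, $u\in B$. To see that $(\rho,\mu)$ is a representation of $A$ on $B$, apply \eqref{Zin-identity} in $A\oplus B$ to triples of the form $(x,y,u)$, $(x,u,y)$ and $(u,x,y)$ and read off the $B$-components; the resulting equalities are exactly \eqref{rep1} and \eqref{rep2} for $(\rho,\mu)$. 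The same recipe with the roles of $A$ and $B$ swapped yields that $(\rho',\mu')$ is a representation of $B$ on $A$. Finally, reading the $A$-components of those same triples (and of the three triples with two $B$-entries) produces \eqref{mp-zinbiel-1}--\eqref{mp-zinbiel-6}.

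The only real obstacle is bookkeeping: with eight types of terms on each side of \eqref{Zin-identity} and six compatibility relations, the risk is mis-labeling which case produces which identity and which of $\rho,\mu,\rho',\mu'$ acts on which argument. To control this, I would fix a convention (e.g., always write the $A$-part first when splitting a mixed product) and tabulate the eight cases, verifying one compatibility relation per mixed case so that both the forward and backward directions are essentially the same computation read in opposite directions.
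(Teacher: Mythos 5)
Your proposal is correct and follows essentially the same route as the paper: the paper's proof is precisely the eight-case tabulation you describe, with the two pure cases giving the Zinbiel identities for $A$ and $B$, the three one-$B$-entry cases yielding \eqref{rep1}--\eqref{rep2} together with \eqref{mp-zinbiel-4}--\eqref{mp-zinbiel-6}, and the three two-$B$-entry cases yielding the representation identities together with \eqref{mp-zinbiel-1}--\eqref{mp-zinbiel-3}, read in both directions for the converse.
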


\begin{proof}
For all $x,y,z\in A$ and $u,v,w \in B$, the equation \eqref{mp-zinbiel} defines a Zinbiel algebra structure on $A\oplus B$ if and only if the following equations are satisfied:
\begin{eqnarray*}
x \circ_{\bowtie} (y \circ_{\bowtie} z)=(x\circ_{\bowtie}y+y \circ_{\bowtie}x)\circ_{\bowtie} z &\Longleftrightarrow& {\rm (A,\circ_A)~is~a~Zinbiel~algebra;}\\
x \circ_{\bowtie} (y \circ_{\bowtie} w)=(x\circ_{\bowtie}y+y \circ_{\bowtie}x)\circ_{\bowtie} w &\Longleftrightarrow& {\rm the~equations~\eqref{rep1}~and~\eqref{mp-zinbiel-4}~hold;}\\
x \circ_{\bowtie} (v \circ_{\bowtie} z)=(x\circ_{\bowtie}v+v \circ_{\bowtie}x)\circ_{\bowtie} z &\Longleftrightarrow& {\rm the~equations~\eqref{rep2}~and~\eqref{mp-zinbiel-6}~hold;}\\
u \circ_{\bowtie} (y \circ_{\bowtie} z)=(u\circ_{\bowtie}y+y \circ_{\bowtie}u)\circ_{\bowtie} z &\Longleftrightarrow& {\rm the~equations~\eqref{rep2}~and~\eqref{mp-zinbiel-5}~hold;}\\
u \circ_{\bowtie} (v \circ_{\bowtie} w)=(u\circ_{\bowtie}v+v \circ_{\bowtie}u)\circ_{\bowtie} w &\Longleftrightarrow& {\rm (B,\circ_B)~is~a~Zinbiel~algebra;}\\
u \circ_{\bowtie} (v \circ_{\bowtie} z)=(u\circ_{\bowtie}v+v \circ_{\bowtie}u)\circ_{\bowtie} z &\Longleftrightarrow& {\rm the~equations~\eqref{rep1}~and~\eqref{mp-zinbiel-1}~hold;}\\
u \circ_{\bowtie} (y \circ_{\bowtie} w)=(u\circ_{\bowtie}y+y \circ_{\bowtie}u)\circ_{\bowtie} w &\Longleftrightarrow& {\rm the~equations~\eqref{rep2}~and~\eqref{mp-zinbiel-3}~hold;}\\
x \circ_{\bowtie} (v \circ_{\bowtie} w)=(x\circ_{\bowtie}v+v \circ_{\bowtie}x)\circ_{\bowtie} w &\Longleftrightarrow& {\rm the~equations~\eqref{rep2}~and~\eqref{mp-zinbiel-2}~hold.}
\end{eqnarray*}
Thus, $(A,B;(\rho,\mu),(\rho',\mu'))$ is a matched pair of Zinbiel algebras if and only if  $(B;\rho,\mu)$ is a representation of $A$ and $(A;\rho',\mu')$ is a representation of $B$ and they satisfy the equations \eqref{mp-zinbiel-1}-\eqref{mp-zinbiel-6}.
\end{proof}

Let us recall the notion of a matched pair of commutative associative algebras.
\begin{defi}{\rm(\cite{NB})}
Let $(A,\ast_1)$ and $(B,\ast_2)$ be two commutative associative algebras. If there exists a representation $(B,\zeta)$ of $A$ and a representation $(A,\zeta')$ of $B$ satisfying the following equations:
\begin{eqnarray}
\label{mp-com-1}\zeta(x)(u\ast_2 v)&=&(\zeta(x)u)\ast_2 v+\zeta(\zeta'(u)x)v;\\
\label{mp-com-2}\zeta'(u)(x\ast_1 y)&=&(\zeta'(u)x)\ast_1 y+\zeta'(\zeta(x)u)y,
\end{eqnarray}
for all $x,y \in A$ and $u,v \in B$. Then $(A,B;\zeta,\zeta')$ is called a {\bf matched pair of commutative associative algebras}.
\end{defi}

In this case, there exists a commutative associative algebra structure $\ast_{\bowtie}$ on the vector space $A\oplus B$ given by
\begin{equation}
(x+u)\ast_{\bowtie}(y+v)=x\ast_1 y+\zeta'(u)y+\zeta'(v)x+u\ast_2 v+\zeta(x)v+\zeta(y)u.
\end{equation}

It can be denoted by $ A\bowtie^{\zeta'}_{\zeta} B $ or simply $A\bowtie B$. Moreover, every commutative associative algebra which is the direct sum of the underlying vector spaces of two subalgebras can be obtained from a matched pair of commutative associative algebras.

\begin{pro}\label{mpZin-mpcom}
Let $(A,B;(\rho,\mu),(\rho',\mu'))$ be a matched pair of Zinbiel algebras. Then $(A^c,B^c;\rho+\mu,\rho'+\mu')$ be a matched pair of commutative associative algebras.
\end{pro}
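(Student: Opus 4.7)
The plan is to deduce the statement from Proposition \ref{mp-equ} together with the commutative-associative analogue already recalled in this section, rather than verifying \eqref{mp-com-1} and \eqref{mp-com-2} directly against the six identities \eqref{mp-zinbiel-1}--\eqref{mp-zinbiel-6}. The idea is to let the Zinbiel bowtie do the work and then pass to sub-adjacent structures.

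First, I would use Proposition \ref{mp-equ} to get the Zinbiel algebra $A \bowtie B = (A \oplus B, \circ_{\bowtie})$ in which $A$ and $B$ sit as Zinbiel subalgebras. Passing to its sub-adjacent commutative associative algebra yields $(A \bowtie B)^c$ with multiplication $w \ast_{\bowtie} w' = w \circ_{\bowtie} w' + w' \circ_{\bowtie} w$. Because $A$ and $B$ are closed under $\circ_{\bowtie}$, the restriction of $\ast_{\bowtie}$ to $A$ (resp.\ $B$) agrees with $\ast_A$ (resp.\ $\ast_B$); hence $A^c$ and $B^c$ are commutative associative subalgebras of $(A \bowtie B)^c$, and vector-space-wise $(A \bowtie B)^c = A^c \oplus B^c$.

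Next I would compute the mixed product. For $x \in A$ and $u \in B$, using \eqref{mp-zinbiel},
$$ x \ast_{\bowtie} u \;=\; x \circ_{\bowtie} u + u \circ_{\bowtie} x \;=\; \bigl(\rho(x)u + \mu(x)u\bigr) \,+\, \bigl(\mu'(u)x + \rho'(u)x\bigr), $$
whose $B$-component is $(\rho+\mu)(x)u$ and whose $A$-component is $(\rho'+\mu')(u)x$. Comparing with the semi-direct product for a matched pair of commutative associative algebras specialised to pure tensors from $A$ and $B$, the induced representations are forced to be $\zeta = \rho + \mu$ on $B$ and $\zeta' = \rho' + \mu'$ on $A$.

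Finally, I would invoke the converse direction of the matched-pair theorem for commutative associative algebras, recalled just after its definition: every commutative associative algebra that is a vector-space direct sum of two subalgebras comes from a matched pair, with the two representations read off from the cross terms. Applied to $(A \bowtie B)^c = A^c \oplus B^c$ with the cross terms computed above, this produces exactly the matched pair $(A^c, B^c; \rho+\mu, \rho'+\mu')$, as desired. The only non-routine aspect is the component-wise identification in the middle step; beyond that the argument is essentially formal, which is precisely the advantage of routing through the Zinbiel bowtie rather than attacking \eqref{mp-com-1}--\eqref{mp-com-2} head-on.
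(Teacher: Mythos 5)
Your proposal is correct and follows essentially the same route as the paper: the paper's proof likewise observes that $(A\bowtie B)^c$ decomposes as the direct sum of the subalgebras $A^c$ and $B^c$ and is therefore the commutative associative algebra obtained from a matched pair $(A^c,B^c;\zeta,\zeta')$ with $\zeta=\rho+\mu$ and $\zeta'=\rho'+\mu'$ read off from the cross terms. Your write-up just makes the component-wise identification more explicit.
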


\begin{proof}
This inclusion can be proved by a direct calculation or from the relation between the Zinbiel algebra $A\bowtie B$ and its sub-adjacent commutative associative algebra. In fact, the sub-adjacent commutative associative algebra $(A\bowtie B)^c$ is just the commutative associative algebra $A^c \bowtie^{\zeta'}_{\zeta} B^c $  obtained from the matched pair $(A^c,B^c;\zeta,\zeta')$:
$$ (x+u)\ast_{\bowtie}(y+v)=x\ast_A y+\zeta'(u)y+\zeta'(v)x+u\ast_B v+\zeta(x)v+\zeta(y)u, \quad \forall x,y\in A^c,u,v\in B^c,$$
where $\zeta=\rho+\mu$ and $\zeta'=\rho'+\mu'.$
\end{proof}

\begin{thm}\label{equ-mps}
Let $(A,\circ_A)$ and $(A^*,\cdot_{A^*})$ be two Zinbiel algebras. then $(A,A^*;-L^*-R^*,R^*,-\huaL^*-\huaR^*,\huaR^*)$ is a matched pair of Zinbiel algebras if and only if  $(A^c,A^{*c};-L^*,-\huaL^*)$ is a matched pair of commutative associative algebras.
\end{thm}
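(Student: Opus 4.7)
The strategy is to handle the two directions separately; the forward implication is essentially a one-line application of Proposition \ref{mpZin-mpcom}, while the converse requires a reduction that exploits the built-in duality of the given actions.

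For the forward direction, I simply invoke Proposition \ref{mpZin-mpcom} on the matched pair $(A, A^*; (-L^*-R^*, R^*), (-\huaL^*-\huaR^*, \huaR^*))$. The induced commutative matched pair is governed by $\rho+\mu = (-L^*-R^*)+R^* = -L^*$ and $\rho'+\mu' = (-\huaL^*-\huaR^*)+\huaR^* = -\huaL^*$, giving exactly $(A^c, A^{*c}; -L^*, -\huaL^*)$.

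For the converse, I first observe that $(A;L,R)$ and $(A^*;\huaL,\huaR)$ are the regular representations of the two Zinbiel algebras, so Proposition \ref{dual-rep} automatically yields that $(A^*; -L^*-R^*, R^*)$ is a representation of $(A,\circ_A)$ and $(A;-\huaL^*-\huaR^*, \huaR^*)$ is a representation of $(A^*, \cdot_{A^*})$. Consequently, the representation axioms \eqref{rep1} and \eqref{rep2} come for free, and it suffices to verify the six cross-compatibility identities \eqref{mp-zinbiel-1}--\eqref{mp-zinbiel-6}. My plan is to pair each such identity against a suitable test element in $A$ or $A^*$, unfold the coadjoint actions using the definitions of $L^*, R^*, \huaL^*, \huaR^*$, and rewrite everything as a scalar identity involving $\circ_A$, $\cdot_{A^*}$, and the canonical pairing.

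The key technical step is then to split each of \eqref{mp-zinbiel-1}--\eqref{mp-zinbiel-6}, after pairing, into a "commutative symmetrization" part that matches exactly \eqref{mp-com-1} or \eqref{mp-com-2} with $\zeta=-L^*$, $\zeta'=-\huaL^*$, together with a "Zinbiel remainder" involving only the data of $(A,\circ_A)$ or $(A^*,\cdot_{A^*})$ alone. The remainders should collapse under the Zinbiel identity \eqref{Zin-identity} and its consequence \eqref{left-com}. The main obstacle is the apparent asymmetry between the two hypothesis identities on the commutative side and the six conclusion identities on the Zinbiel side; the resolution is to group the six equations into three pairs (those with two $A$-inputs and one $A^*$-input, two $A^*$-inputs and one $A$-input, and the mixed placements) and to show that within each pair the symmetric combination is controlled by the commutative matched pair condition while the complementary combination is controlled purely by the Zinbiel axioms of $A$ and $A^*$. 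Once this decomposition is carried out explicitly for one representative equation, the remaining five follow from the same template up to relabeling, completing the proof.
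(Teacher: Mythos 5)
Your forward direction is exactly the paper's: Proposition \ref{mpZin-mpcom} applied to the coregular actions, using $\rho+\mu=-L^*$ and $\rho'+\mu'=-\huaL^*$. Your converse also follows the paper's overall route (the representation axioms are free by Proposition \ref{dual-rep}, and one dualizes the six identities \eqref{mp-zinbiel-1}--\eqref{mp-zinbiel-6} against test elements to compare them with \eqref{mp-com-1}--\eqref{mp-com-2}). However, the structural picture you describe for that reduction does not match what actually happens, in two ways. First, your proposed grouping ``into three pairs'' by input signature is not available: among the six identities there are only two signatures (one $A$-input with two $A^*$-inputs for \eqref{mp-zinbiel-1}--\eqref{mp-zinbiel-3}, and two $A$-inputs with one $A^*$-input for \eqref{mp-zinbiel-4}--\eqref{mp-zinbiel-6}), and the partition that is actually relevant cuts across the signatures: \eqref{mp-zinbiel-1}, \eqref{mp-zinbiel-5}, \eqref{mp-zinbiel-6} are each individually equivalent to \eqref{mp-com-1}, while \eqref{mp-zinbiel-2}, \eqref{mp-zinbiel-3}, \eqref{mp-zinbiel-4} are each individually equivalent to \eqref{mp-com-2}. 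There is no need to form ``symmetric'' and ``complementary'' combinations of two equations at a time, and if you tried, the bookkeeping (three symmetric combinations versus only two commutative identities) would not close up as you describe.

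Second, the ``Zinbiel remainder'' you expect to kill with \eqref{Zin-identity} and \eqref{left-com} does not arise. For the coregular actions the reduction is purely formal: after pairing with a test element, the extra terms combine because $\rho'+\mu'=-\huaL^*$ (so, e.g., in \eqref{mp-zinbiel-2} the terms $\rho(\mu'(u)x)v+\rho(\rho'(u)x)v$ collapse to $-\rho(\huaL_u^*x)v$, and $(\rho(x)u)\circ_B v+(\mu(x)u)\circ_B v$ collapses to $-\huaR_v L_x^*u$), and each resulting term matches a term of \eqref{mp-com-1} or \eqref{mp-com-2} under the duality $\langle L_x^*\xi,y\rangle=-\langle\xi,x\circ_A y\rangle$, etc.; the Zinbiel axioms of $(A,\circ_A)$ and $(A^*,\cdot_{A^*})$ are never invoked beyond guaranteeing the representations. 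So your plan, as stated, would not execute as written; it needs to be replaced by the term-by-term matching of each single identity with one of the two commutative identities, which is what the paper does (carrying out the case \eqref{mp-zinbiel-1}$\Leftrightarrow$\eqref{mp-com-1} explicitly).
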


\begin{proof}
By Proposition \ref{mpZin-mpcom}, we can obviously know that the ``only if" part is right.
We only need to prove that the ``if" part. A direct proof is given as follows. In the case $\rho=-L^*-R^*,\mu=R^*,\rho'=-\huaL^*-\huaR^*,\mu'=\huaR^*$ and $\zeta=-L^*,\zeta'=-\huaL^*$, we have
\begin{eqnarray*}
{\rm equation~\eqref{mp-zinbiel-1}}\Longleftrightarrow {\rm equation~\eqref{mp-zinbiel-5}}&\Longleftrightarrow&{\rm equation~\eqref{mp-zinbiel-6}}\Longleftrightarrow{\rm equation~\eqref{mp-com-1}}\\
{\rm equation~\eqref{mp-zinbiel-2}}\Longleftrightarrow {\rm equation~\eqref{mp-zinbiel-3}}&\Longleftrightarrow&{\rm equation~\eqref{mp-zinbiel-4}}\Longleftrightarrow{\rm equation~\eqref{mp-com-2}}.
\end{eqnarray*}
As an example, we show that how the equation \eqref{mp-zinbiel-1} is equivalent to the equation  \eqref{mp-com-1}. In fact, it follows from
\begin{eqnarray*}
\langle R_x^*(u\ast_B v),y\rangle&=&\langle L_y^*(u\ast_B v),x\rangle;\\
\langle R^* \big(-\huaL_v^*(x)-\huaR_v^*(x) \big)u,y\rangle&=&\langle (L_y^* u)\ast_B v,x\rangle;\\
\langle u\circ_B (R_x^* v),y\rangle&=&-\langle L^*(\huaL^*_u y)v,x\rangle,
\end{eqnarray*}
for all $x,y\in A$ and $u,v\in B$. Actually, the proof of other equivalent relations is similar. It can be proved in the same way and we omit it.
\end{proof}

Now we introduce the notion of a quadratic Zinbiel algebra,which is an important ingredient in the study of Zinbiel bialgebras.

\begin{defi}\label{quadratic-Zinbiel-alg}
A {\bf quadratic Zinbiel algebra} is a Zinbiel algebra $(A,\circ_A,\omega)$ equipped with a nondegenerate skew-symmetric bilinear form $\omega\in \wedge^2 A^*$ such that the following invariant condition holds:
\begin{equation}\label{quadratic-condition1}
\omega(x\circ_A y,z)=\omega(y,x\circ_A z+z\circ_A x),\quad \forall x,y,z\in A.
\end{equation}
\end{defi}

\begin{rmk}
In fact, by \eqref{quadratic-condition1}, we have
\begin{equation}\label{quadratic-condition2}
\omega(x\circ_A y,z)=\omega(y,x\circ_A z+z\circ_A x)=\omega(z\circ_A y,x),\quad \forall x,y,z\in A.
\end{equation}
\end{rmk}

\begin{defi}
A {\bf Manin triple of Zinbiel algebras} is a triple $(\huaA,A_1,A_2)$, where
\begin{itemize}
\item[{\rm(i)}] $(\huaA,\circ_{\huaA},\omega)$ is a quadratic Zinbiel algebra.

\item[{\rm(ii)}] both $A_1$ and $A_2$ are isotropic subalgebras of $(\huaA,\circ_{\huaA})$ with respect to $\omega$.

\item[{\rm(iii)}]$\huaA=A_1\oplus A_2$ as vector spaces.
\end{itemize}
\end{defi}

\begin{ex}{\rm
Let $(A,\circ_A)$ be a Zinbiel algebra. Then $(A\ltimes_{-L^*-R^*,R^*} A^*,A,A^*)$ is a Manin triple of Zinbiel algebras, where the natural nondegenerate skew-symmetric bilinear form $\omega$ on $A\oplus A^*$ is given by
\begin{equation}\label{natural-bilinear-form}
\omega(x+\xi,y+\eta)=\langle\xi,y\rangle-\langle \eta,x \rangle, \quad \forall x,y\in A, \xi,\eta\in A^*.
\end{equation}
}
\end{ex}

\begin{defi}
Let $A$ be a vector space. A {\bf Zinbiel bialgebra} structure on $A$ is a pair of linear maps $(\alpha,\beta)$ such that $\alpha:A\to A\otimes A,\beta:A^*\to A^*\otimes A^*$ and
\begin{itemize}
\item[{\rm(i)}] $\alpha^*: A^*\otimes A^* \to A^*$ is a Zinbiel algebra structure on $A^*$.

\item[{\rm(ii)}] $\beta^*: A\otimes A \to A$ is a Zinbiel algebra structure on $A$.

\item[{\rm(iii)}] $\alpha$ is a 1-cocycle of the sub-adjacent commutative associative algebra $A^c$ with coefficients in the representation $\Big(A\otimes A;L\otimes \Id+\Id \otimes (L+R)\Big)$.

\item[{\rm(iv)}] $\beta$ is a 1-cocycle of the sub-adjacent commutative associative algebra $A^{\ast c}$ with coefficients in the representation $\Big(A^*\otimes A^*;\huaL\otimes \Id+\Id \otimes (\huaL+\huaR)\Big)$.
\end{itemize}
  We denote this {\bf Zinbiel bialgebra} by $(A,A^*,\alpha,\beta)$, or simply by $(A,A^*)$.
\end{defi}

Up to now, we have introduced the notions of a matched pair and a Manin triple of Zinbiel algebras and a Zinbiel bialgebra. Next we show that these objects are equivalent when we consider the coregular representation in a matched pair of Zinbiel algebras. The following theorem is the main result in this section.

\begin{thm}
Let $(A,\circ_A)$ and $(A^*,\cdot_{A^*})$ be two Zinbiel algebras. Then the following conditions are equivalent:
\begin{itemize}
\item[{\rm(i)}] $(A,A^*)$ is a Zinbiel bialgebra.

\item[{\rm(ii)}] $(A,A^*;-L^*-R^*,R^*,-\huaL^*-\huaR^*,\huaR^*)$ is a matched pair of Zinbiel algebras.

\item[{\rm(iii)}] $(A\oplus A^*,A,A^*)$ is a Manin triple of Zinbiel algebras, where the invariant skew-symmetric bilinear form $\omega$ on $A\oplus A^*$ is given by \eqref{natural-bilinear-form}.
\end{itemize}
\end{thm}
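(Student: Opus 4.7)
My plan is to prove the two equivalences (ii) $\Leftrightarrow$ (iii) and (i) $\Leftrightarrow$ (ii) in turn: the former is geometric, using the invariant bilinear form $\omega$, while the latter is algebraic, via dualization of the two $1$-cocycle conditions into the six mixed compatibility identities of the matched pair.

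For (ii) $\Rightarrow$ (iii), Proposition \ref{mp-equ} already produces a Zinbiel algebra structure $\circ_{\bowtie}$ on $A \oplus A^*$ in which $A$ and $A^*$ are subalgebras, and they are manifestly isotropic for the pairing $\omega$ of \eqref{natural-bilinear-form}. What remains is the invariance condition \eqref{quadratic-condition1} for $\omega$, which by multilinearity reduces to finitely many mixed cases according to how the three inputs are chosen from $A$ or $A^*$. Each case unfolds directly from the defining identities $\langle L_x^*\xi,y\rangle = -\langle\xi, x\circ_A y\rangle$, $\langle R_x^*\xi,y\rangle = -\langle\xi, y\circ_A x\rangle$ (and their $A^*$-analogues) together with the formula \eqref{mp-zinbiel} for $\circ_{\bowtie}$. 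Conversely, for (iii) $\Rightarrow$ (ii), a Manin triple $(\huaA, A, A^*)$ induces a matched pair via Proposition \ref{mp-equ}, with representations $(\rho,\mu)$ and $(\rho',\mu')$ determined by restriction of $\circ_{\huaA}$ to mixed inputs; the invariance of $\omega$, together with the isotropy of $A$ and $A^*$, pins these representations down to $\rho = -L^* - R^*$, $\mu = R^*$, $\rho' = -\huaL^* - \huaR^*$, $\mu' = \huaR^*$.

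For (i) $\Leftrightarrow$ (ii), the two Zinbiel algebra structures on $A$ and $A^*$ are common to both sides, so the task is to identify the two $1$-cocycle conditions (iii), (iv) of a Zinbiel bialgebra with the six matched-pair compatibilities \eqref{mp-zinbiel-1}--\eqref{mp-zinbiel-6}. The most efficient route exploits Theorem \ref{equ-mps}, which collapses those six Zinbiel identities to the two commutative matched-pair identities \eqref{mp-com-1}, \eqref{mp-com-2} for $(A^c, A^{*c}; -L^*, -\huaL^*)$. It then suffices to show, by pairing with an arbitrary $\xi \otimes \eta \in A^* \otimes A^*$, that the $1$-cocycle condition for $\alpha$ with coefficients in $L \otimes \Id + \Id \otimes (L+R)$ is equivalent to \eqref{mp-com-1}; symmetrically, pairing with $x \otimes y \in A \otimes A$, the $1$-cocycle condition for $\beta$ translates to \eqref{mp-com-2}.

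The main obstacle I anticipate is the bookkeeping in this last translation: the coefficient $L \otimes \Id + \Id \otimes (L+R)$ has an asymmetric ``stray'' summand $L \otimes \Id$ with no immediate counterpart in the commutative matched-pair equation. The key observation needed is that $L_x + R_x$ is the $A^c$-multiplication operator by $x$, so its dual on $A^{*}$ reproduces precisely the coefficient $-\huaL^*$ (up to sign) appearing in $(A^c, A^{*c}; -L^*, -\huaL^*)$, while the isolated $L \otimes \Id$ summand is absorbed after the symmetrization inherent in the cocycle relation together with the commutativity of $\cdot_{A^*}$. Once this cancellation is made explicit the two sides match term-by-term, completing the equivalence.
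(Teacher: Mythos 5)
Your proposal follows essentially the same route as the paper: it proves (ii)$\Leftrightarrow$(iii) by directly verifying the invariance condition \eqref{quadratic-condition1} for $\omega$ on $A\bowtie A^*$ (and conversely using invariance plus isotropy to pin down the four representation maps), and proves (i)$\Leftrightarrow$(ii) by passing through Theorem \ref{equ-mps} and dualizing the two $1$-cocycle conditions against the commutative matched-pair identities, exactly as the paper does. The only slip is a labeling swap in the last step: pairing against $\xi\otimes\eta$ shows the cocycle condition for $\alpha$ corresponds to \eqref{mp-com-2} and that for $\beta$ to \eqref{mp-com-1}, not the other way around as you state, but since the equivalence is of the two conjunctions this does not affect the argument.
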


\begin{proof}
First we can prove that {\rm(ii)} is equivalent to {\rm(iii)}.

Let $(A,A^*;-L^*-R^*,R^*,-\huaL^*-\huaR^*,\huaR^*)$ be a matched pair of Zinbiel algebras. Then $(A\oplus A^*,\circ_{\bowtie})$ is a Zinbiel algebra, where $\circ_{\bowtie}$ is given by \eqref{mp-zinbiel}. We only need to prove that $\omega$ satisfies the invariant condition \eqref{quadratic-condition1}. For all $x,y,z\in A$ and $\xi,\eta,\theta\in A^*$, we have
\begin{eqnarray*}
&&\omega\Big( (x+\xi)\circ_{\bowtie}(y+\eta), z+\theta   \Big)\\
&=& \omega\Big( x\circ_A y-(\huaL_{\xi}^*+\huaR_{\xi}^*)y+\huaR_{\eta}^* x+\xi \cdot_{A^*} \eta-
(L_x^*+R_x^*)\eta+R_y^*\xi,z+\theta \Big)\\
&=& \langle \xi \cdot_{A^*} \eta-(L_x^*+R_x^*)\eta+R_y^*\xi,z  \rangle-\langle\theta,x\circ_A y-(\huaL_{\xi}^*+\huaR_{\xi}^*)y \rangle\\
&=& \langle \xi \cdot_{A^*} \eta,z  \rangle+\langle \eta,x\circ_A z+z\circ_A x \rangle-\langle \xi,z\circ_A y\rangle-\langle \theta,x\circ_A y \rangle\\
&&-\langle \xi \cdot_{A^*} \theta+ \theta \cdot_{A^*} \xi,y \rangle+\langle \theta\cdot_{A^*} \eta,x \rangle.
\end{eqnarray*}
Similarly, we also have
\begin{eqnarray*}
&&\omega\Big(y+\eta,(x+\xi)\circ_{\bowtie}(z+\theta)+(z+\theta)\circ_{\bowtie}(x+\xi) \Big)\\
&=& \omega \Big(y+\eta,x\circ_A z+z\circ_A x-\huaL_{\xi}^* z-\huaL_{\theta}^* x+\theta \cdot_{A^*} \xi+\xi \cdot_{A^*} \theta-L_x^* \theta-L_z^* \xi \Big)\\
&=& \langle \eta, x\circ_A z+z\circ_A x-\huaL_{\xi}^* z-\huaL_{\theta}^* x \rangle-\langle \theta \cdot_{A^*} \xi+\xi \cdot_{A^*} \theta-L_x^* \theta-L_z^* \xi, y  \rangle\\
&=& \langle \eta,x\circ_A z+z\circ_A x \rangle+\langle \xi \cdot_{A^*} \eta,z\rangle+\langle \theta\cdot_{A^*} \eta,x \rangle-\langle \xi \cdot_{A^*} \theta+ \theta \cdot_{A^*} \xi,y\rangle\\
&&-\langle \theta,x\circ_A y \rangle-\langle \xi,z\circ_A y\rangle.
\end{eqnarray*}
Thus, $\omega$ satisfies the invariant condition \eqref{quadratic-condition1}.

On the other hand, if $(\huaA,A,A^*)$ is a Manin triple of Zinbiel algebras with the invariant bilinear form given by \eqref{natural-bilinear-form}, for all $x\in A,\xi,\eta\in A^*$, by \eqref{quadratic-condition2}, we have
$$\langle -\mu'(\xi)x,\eta\rangle=\omega(x\circ_{\bowtie} \xi, \eta)=-\omega(x,\eta \circ_{\bowtie} \xi)=\langle  \eta \cdot_{A^*} \xi,x \rangle=\langle\eta, -\huaR_{\xi}^* x \rangle,$$
which implies that $\mu'=\huaR^*.$ By \eqref{quadratic-condition1}, we also have
$$\langle\eta,(\rho'+\mu')(\xi)x\rangle=\omega(\eta,x\circ_{\bowtie} \xi+\xi \circ_{\bowtie} x)= \omega(x\circ_{\bowtie} \eta,\xi)= \omega(\huaR_{\eta}^*x,\xi)=\langle \eta,-\huaL_{\xi}^*x \rangle,   $$
which implies that $\rho'=-\huaL^*-\huaR^*.$
Using the same way, we can also have $\mu=R^*$ and $\rho=-L^*-R^*$. Thus, $(A,A^*;-L^*-R^*,R^*,-\huaL^*-\huaR^*,\huaR^*)$ is a matched pair of Zinbiel algebras.

Next we prove that {\rm(i)} is equivalent to {\rm(ii)}.

Let $(A,\circ_A)$ and $(A^*,\cdot_{A^*})$ be two Zinbiel algebras. If $(A,A^*;-L^*-R^*,R^*,-\huaL^*-\huaR^*,\huaR^*)$ is a matched pair of Zinbiel algebras, by Theorem \ref{equ-mps}, equivalently, $(A^c,A^{*c};-L^*,-\huaL^*)$ is a matched pair of commutative associative algebras. Consider the equation \eqref{mp-com-1}, we have
\begin{eqnarray*}
&&\langle -L_x^*(\xi\ast_{A^*} \eta)+(L_x^*\xi)\cdot_{A^*} \eta+ \eta \cdot_{A^*}(L_x^*\xi)-L^*(\huaL_{\xi}^* x)\eta, y \rangle\\
&=&\langle\xi\ast_{A^*} \eta, x\circ_A y
\rangle+\langle \xi,x\circ_A (\huaR_{\eta}^* y) \rangle+\langle \xi,x \circ_A (\huaL_{\eta}^* y) \rangle+\langle \eta, (\huaL_{\xi}^* x)\circ_A y  \rangle\\
&=&\langle \beta(\xi\ast_{A^*} \eta)-(\Id \otimes \huaL_{\eta}+\huaR_{\eta})\beta(\xi)-(\huaL_{\xi}\otimes \Id)\beta(\eta), x\otimes y\rangle\\
&=& 0,
\end{eqnarray*}
which implies that $\beta(\xi\ast_{A^*} \eta)=(\huaL_{\xi}\otimes \Id)\beta(\eta)+(\Id \otimes (\huaL_{\eta}+\huaR_{\eta}))\beta(\xi)$. So $\beta$ is a 1-cocycle of $A^{\ast c}$ with coefficients in the representation $\Big(A^*\otimes A^*;\huaL\otimes \Id+\Id \otimes (\huaL+\huaR)\Big)$.
Similarly, by \eqref{mp-com-2}, we also have
\begin{eqnarray*}
&&\langle -\huaL_{\xi}^*(x\ast_A y)+(\huaL_{\xi}^*x)\ast_A y+y \ast_A (\huaL_{\xi}^*x)-\huaL(L_x^* \xi)y, \eta \rangle\\
&=&\langle x\ast_A y,\xi\cdot_{A^*} \eta\rangle+\langle x,\xi\cdot_{A^*} (R^*_y \eta) \rangle+\langle x,\xi\cdot_{A^*} (L^*_y \eta)\rangle+\langle y,(L_x^* \xi)\cdot_{A^*}\eta \rangle\\
&=&\langle \alpha(x\ast_A y)-(L_x\otimes \Id)\alpha(y)-(\Id\otimes L_y+R_y)\alpha(x),\xi\otimes \eta\rangle\\
&=& 0,
\end{eqnarray*}
which implies that $\alpha(x\ast_A y)=(L_x\otimes \Id)\alpha(y)+(\Id\otimes L_y+R_y)\alpha(x)$.
So $\alpha$ is a 1-cocycle of $A^c$ with coefficients in the representation $\Big(A\otimes A;L\otimes \Id+\Id \otimes (L+R)\Big)$.
Thus $(A,A^*)$ is a Zinbiel bialgebra if and only if $(A^c,A^{*c};-L^*,-\huaL^*)$ is a matched pair of commutative associative algebras, or equivalently, $(A,A^*;-L^*-R^*,R^*,-\huaL^*-\huaR^*,\huaR^*)$ is a matched pair of Zinbiel algebras. The proof is finished.
\end{proof}

\begin{defi}
Let $(A,A^*,\alpha_A,\beta_A)$ and $(B,B^*,\alpha_B,\beta_B)$ be two Zinbiel bialgebras. A linear map $\varphi:A \to B$ is called {\bf a homomorphism of Zinbiel bialgebras}, if $\varphi:A \to B$ is a homomorphism of Zinbiel algebras such that $\varphi^*:B^* \to A^*$ is also a homomorphism of Zinbiel algebras, or equivalently, $\varphi$ satisfying
\begin{eqnarray}
(\varphi \otimes \varphi)\circ \alpha_A=\alpha_B \circ \varphi, \quad (\varphi^* \otimes \varphi^*)\circ \beta_B=\beta_A \circ \varphi^*.
\end{eqnarray}
Furthermore, if $\varphi:A \to B$ is a linear isomorphism of vector spaces, then $\varphi$ is called {\bf an isomorphism of Zinbiel bialgebras.}
\end{defi}

\begin{pro}\label{pro:induced isomorphism}
Let $(A,A^*,\alpha_A,\beta_A)$ be a Zinbiel bialgebra and $B$ a vector space. Assume that $\varphi:A \to B$ is a linear isomorphism between vector spaces. Define product $\circ_B:B\otimes B \to B$ and $\cdot_{B^*}:B^*\otimes B^* \to B^*$ respectively by
$$  u\circ_B v=\varphi(\varphi^{-1}(u) \circ_A  \varphi^{-1}(v)),\quad \xi' \cdot_{B^*} \eta'=(\varphi^*)^{-1} (\varphi^*(\xi')\cdot_{A^*} \varphi^*(\eta')),\quad \forall u,v\in B,\xi',\eta' \in B^*.$$
Then $(B,B^*,\alpha_B,\beta_B)$ is a Zinbiel bialgebra, where $\alpha_B:B \to B\otimes B$ and $\beta_B:B^* \to B^* \otimes B^*$ are given respectively by
$$ \langle \alpha'(u),\xi'\otimes \eta' \rangle=\langle u,\xi' \cdot_{B^*} \eta' \rangle,\quad
\langle \beta'(\xi'),u \otimes v \rangle=\langle \xi', u \circ_{B} v \rangle, \quad \forall u,v\in B,\xi',\eta' \in B^*. $$
Moreover, $\varphi$ is a Zinbiel bialgebra isomorphism between $(A,A^*,\alpha_A,\beta_A)$ and $(B,B^*,\alpha_B,\beta_B)$.
\end{pro}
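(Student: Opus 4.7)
The plan is to deduce everything by transport of structure along $\varphi$, combined with the equivalence between Zinbiel bialgebras and matched pairs of Zinbiel algebras established earlier in this section.

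First, since $\circ_B$ is obtained from $\circ_A$ by conjugation with the bijection $\varphi$, the Zinbiel identity \eqref{Zin-identity} on $A$ transfers verbatim to $B$; in particular $\varphi:(A,\circ_A)\to(B,\circ_B)$ is by construction a Zinbiel algebra isomorphism. Applying the same argument with $\varphi^*$ in place of $\varphi$, the multiplication $\cdot_{B^*}$ is a Zinbiel structure on $B^*$ and $\varphi^*:(B^*,\cdot_{B^*})\to(A^*,\cdot_{A^*})$ is a Zinbiel algebra isomorphism. A direct computation from the defining pairings then gives $\alpha_B^{\ast}=\cdot_{B^*}$ and $\beta_B^{\ast}=\circ_B$, so conditions (i) and (ii) of the definition of a Zinbiel bialgebra are satisfied.

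For the 1-cocycle conditions (iii) and (iv), the cleanest route is via the equivalence theorem proved above. The Zinbiel bialgebra $(A,A^*,\alpha_A,\beta_A)$ corresponds to the matched pair $(A,A^*;-L^*-R^*,R^*,-\huaL^*-\huaR^*,\huaR^*)$ on $A\oplus A^*$. Because $\varphi$ is a Zinbiel algebra isomorphism, the regular representations intertwine in the form $L^{B}_{\varphi(x)}=\varphi\circ L^{A}_{x}\circ\varphi^{-1}$ and $R^{B}_{\varphi(x)}=\varphi\circ R^{A}_{x}\circ\varphi^{-1}$, with the analogous identities on the dual side coming from $\varphi^*$ being a Zinbiel algebra isomorphism. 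Dualizing these intertwining relations shows that the linear isomorphism $\varphi\oplus(\varphi^*)^{-1}:A\oplus A^*\to B\oplus B^*$ carries the Zinbiel algebra structure on $A\bowtie A^*$ onto the Zinbiel algebra structure on $B\oplus B^*$ obtained from the matched pair $(B,B^*;-L^*-R^*,R^*,-\huaL^*-\huaR^*,\huaR^*)$ formed via the coregular representations of $B$ and $B^*$. By the equivalence theorem, this matched pair corresponds to the Zinbiel bialgebra $(B,B^*,\alpha_B,\beta_B)$, and the 1-cocycle conditions (iii), (iv) follow.

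It remains to verify that $\varphi$ is a Zinbiel bialgebra isomorphism. This is a direct consequence of the defining pairings: the equality $\varphi^*(\xi'\cdot_{B^*}\eta')=\varphi^*(\xi')\cdot_{A^*}\varphi^*(\eta')$ dualizes into $(\varphi\otimes\varphi)\alpha_A=\alpha_B\varphi$, and symmetrically $\varphi(u\circ_B v)=\varphi(u)\circ_A\varphi(v)$ combined with the pairings defining $\beta_A$ and $\beta_B$ gives $(\varphi^*\otimes\varphi^*)\beta_B=\beta_A\varphi^*$. The main obstacle in the whole argument is the bookkeeping in the third step; once the intertwining identities $L^{B}_{\varphi(x)}\circ\varphi=\varphi\circ L^{A}_{x}$ and their duals are recorded carefully, everything else is essentially automatic.
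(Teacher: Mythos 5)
Your proposal is correct. The paper itself gives no argument here (it says ``It follows by a direct calculation. We omit the details.''), so the implicit intended route is a brute-force verification that $\alpha_B$ and $\beta_B$ satisfy the two $1$-cocycle identities for the transported products. Your route is genuinely different and cleaner: you transport the Zinbiel identities along $\varphi$ and $\varphi^*$ to get conditions (i)--(ii), and then, instead of checking the cocycle conditions directly, you push the matched-pair Zinbiel structure on $A\oplus A^*$ forward along $\varphi\oplus(\varphi^*)^{-1}$ and invoke the equivalence between Zinbiel bialgebras and matched pairs. The key point you correctly isolate is the intertwining of the coregular representations: with the paper's sign convention $\langle R^*_x\xi,y\rangle=-\langle\xi,y\circ_A x\rangle$ one indeed gets $R^{B*}_{\varphi(x)}\circ(\varphi^*)^{-1}=(\varphi^*)^{-1}\circ R^{A*}_x$ and likewise for $L^*$, $\huaL^*$, $\huaR^*$, so the product $\circ_\bowtie$ on $B\oplus B^*$ built from the coregular representations is exactly the pushforward of the one on $A\oplus A^*$; Proposition \ref{mp-equ} then yields the matched pair for $(B,B^*)$ and the equivalence theorem yields the bialgebra. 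What this buys over the direct calculation is that no cocycle identity is ever re-verified. One trivial slip in your last paragraph: since $\varphi:A\to B$, the identity should read $\varphi^{-1}(u\circ_B v)=\varphi^{-1}(u)\circ_A\varphi^{-1}(v)$ (equivalently $\varphi(x\circ_A y)=\varphi(x)\circ_B\varphi(y)$), not $\varphi(u\circ_B v)=\varphi(u)\circ_A\varphi(v)$; this does not affect the argument.
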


\begin{proof}
It follows by a direct calculation. We omit the details.
\end{proof}

\begin{ex}\label{dual Zinbiel alg}
{\rm
Let $(A,A^*,\alpha,\beta)$ be a Zinbiel bialgebra. Then its dual $(A^*,A,\beta,\alpha)$ is also a Zinbiel bialgebra.
}
\end{ex}

\begin{ex}{\rm
Let $(A,\circ_A)$ be a Zinbiel algebra and the Zinbiel algebra structure on $A^*$ be trivial, then in this case $(A,A^*,0,\beta)$ is a Zinbiel bialgebra, which corresponds to the Zinbiel algebra $A \ltimes_{-L^*-R^*,R^*} A^*.$
}
\end{ex}

\section{Coboundary Zinbiel bialgebras and the classical Zinbiel Yang-Baxter equation}\label{sec:coboundary}

In this section, we study the case that the linear map $\alpha$ is a 1-coboundary of the sub-adjacent commutative associative algebra $A^c$ with coefficients in the representation $\Big(A\otimes A;L\otimes \Id+\Id \otimes (L+R)\Big)$. Then we obtain a equivalent characterization of a coboundary Zinbiel bialgebra, which gives rise to the Zinbiel Yang-Baxter equation, as an analogue of the classical Yang-Baxter equation.

\begin{defi}
A Zinbiel bialgebra $(A,A^*,\alpha,\beta)$ is called {\bf coboundary} if $\alpha$ is a 1-coboundary of $A^c$ with coefficients in the representation $\Big(A\otimes A;L\otimes \Id+\Id \otimes (L+R)\Big)$. It means, there exists a $r\in A\otimes A$ such that
\begin{equation}\label{coboundary-condition}
\alpha(x)=\Big( \Id\otimes (L_x+R_x)-L_x \otimes \Id  \Big)r, \quad \forall x\in A.
\end{equation}
\end{defi}

Let $A$ be a Zinbiel algebra whose product is given by $\beta^*:A\otimes A\to A$ and $r\in A\otimes A$. Suppose that $\alpha$ satisfies the equation \eqref{coboundary-condition}, then
it's straightforward to check that $\alpha$ is a 1-cocycle of $A^c$ with coefficients in the representation $\Big(A\otimes A;L\otimes \Id+\Id \otimes (L+R)\Big)$ by direct calculations. Therefore, $(A,A^*,\alpha,\beta)$ is a Zinbiel bialgebra if and only if the following conditions hold:
\begin{itemize}
\item[{\rm(i)}] $\alpha^*: A^*\otimes A^* \to A^*$ is a Zinbiel algebra structure on $A^*$.

\item[{\rm(ii)}] $\beta$ is a 1-cocycle of $A^{\ast c}$ with coefficients in the representation $\Big(A^*\otimes A^*;\huaL\otimes \Id+\Id \otimes (\huaL+\huaR)\Big)$, where the Zinbiel algebra structure on $A^*$ is given by {\rm(i)}.
\end{itemize}

\begin{pro}
Let $(A,\circ_A)$ be a Zinbiel algebra whose product is given by $\beta^*:A\otimes A\to A$ and $r\in A\otimes A$. Suppose there exists a Zinbiel algebra structure $\cdot_{A^*}$ given by $\alpha^*:A^*\otimes A^*\to A^*$, where $\alpha$ is given by \eqref{coboundary-condition}. Then $\beta:A^* \to A^*\otimes A^*$ is a 1-cocycle of $A^{*c}$ with coefficients in the representation $\Big(A\otimes A;L\otimes \Id+\Id \otimes (L+R)\Big)$ if and only if $r$ satisfies
\begin{equation}\label{beta-1-cocycle}
\Big( L_{x\circ_A y}\otimes \Id-\Id \otimes L_{x\circ_A y}-L_x L_y\otimes \Id+L_x\otimes L_y \Big)(r-\sigma r)=0,
\end{equation}
where the linear map $\sigma:A\otimes A \to A\otimes A$ is a exchanging operator satisfying $\sigma(x\otimes y)=y\otimes x$, for all $x,y\in A.$

\end{pro}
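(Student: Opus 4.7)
The strategy is to dualize the 1-cocycle condition for $\beta$ and then substitute the coboundary formula for $\alpha$. Following exactly the computation in the proof of the main theorem of Section~\ref{Zinbiel-bialgebras} (the passage deriving equation \eqref{mp-com-1}), pairing the cocycle condition for $\beta$ with $x \otimes y$ and using the defining relations $\langle \beta(\xi), x \otimes y\rangle = \langle \xi, x \circ_A y\rangle$ and $\langle \xi \cdot_{A^*} \eta, z\rangle = \langle \xi \otimes \eta, \alpha(z)\rangle$ allows me to rewrite the cocycle condition for $\beta$ as
\begin{equation*}
\langle \xi \otimes \eta + \eta \otimes \xi,\, \alpha(x \circ_A y)\rangle + \langle L_x^*\xi \otimes \eta + \eta \otimes L_x^*\xi,\, \alpha(y)\rangle + \langle \eta,\, (\huaL_\xi^* x) \circ_A y\rangle = 0
\end{equation*}
for all $x, y \in A$ and $\xi, \eta \in A^*$.

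Next I would substitute the coboundary formula $\alpha(z) = (\Id \otimes (L_z + R_z) - L_z \otimes \Id)r$ together with the derived expression $\huaL_\xi^* x = (\xi \otimes \Id)\bigl((L_x \otimes \Id - \Id \otimes (L_x + R_x))r\bigr)$ (which follows from $\langle \huaL_\xi^* x, \tau\rangle = -\langle \alpha(x), \xi \otimes \tau\rangle$). Writing $r = r_1 \otimes r_2$ in suppressed sum notation, each of the three summands above becomes a combination of scalar products of the form $\xi(a)\eta(b)$ in which $a, b \in A$ are built from $x, y, r_1, r_2$ via the Zinbiel product.

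I would then simplify using the Zinbiel identity \eqref{Zin-identity} in the form $u \circ_A (v \circ_A w) = (u \ast_A v) \circ_A w$, its consequence \eqref{left-com}, and the fact that $L_u L_v = L_{u \ast_A v}$ since $L$ gives a representation of $A^c$. Grouping the resulting terms by whether $\xi$ contracts with the first or second tensor leg of $r$ assembles them into expressions involving $r$ and $\sigma r$ respectively. A direct verification shows that the symmetric combination $r + \sigma r$ contributes zero, leaving precisely
\begin{equation*}
\langle \xi \otimes \eta,\, \bigl(L_{x \circ_A y}\otimes \Id - \Id \otimes L_{x \circ_A y} - L_x L_y \otimes \Id + L_x \otimes L_y\bigr)(r - \sigma r)\rangle = 0,
\end{equation*}
which, holding for all $\xi, \eta \in A^*$, is equivalent to \eqref{beta-1-cocycle}.

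The main obstacle is the combinatorial bookkeeping, as several families of monomials in the components of $r$ must be tracked through the Zinbiel manipulations. The key simplifications are the Zinbiel identity, which merges $L_{x \circ_A y}$-terms with $L_x L_y$-terms up to $R$-corrections, and \eqref{left-com}, which permutes the roles of $x$ and $y$ inside nested products in just the way needed so that the symmetric part of $r$ drops out and only $r - \sigma r$ survives in the final identity.
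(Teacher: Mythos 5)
Your proposal follows the paper's own proof essentially step for step: dualize the cocycle condition via the pairings $\langle\beta(\xi),x\otimes y\rangle=\langle\xi,x\circ_A y\rangle$ and $\langle\xi\cdot_{A^*}\eta,z\rangle=\langle\xi\otimes\eta,\alpha(z)\rangle$, substitute the coboundary formula \eqref{coboundary-condition} together with the induced expression for $\huaL^*_\xi x$, and reduce with the Zinbiel identity and \eqref{left-com} until only $r-\sigma r$ survives; your intermediate three-term reformulation coincides exactly with the paper's displayed expression $\langle\xi\ast_{A^*}\eta,x\circ_A y\rangle+\langle L_x^*(\xi)\cdot_{A^*}\eta+\eta\cdot_{A^*}L_x^*(\xi)-L^*(\huaL^*_{\xi}x)\eta,y\rangle$. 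The approach is correct and the same as the paper's, the only difference being that the paper carries out the term-by-term expansion explicitly while you defer it as routine bookkeeping.
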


\begin{proof}
Since $\beta:A^* \to A^*\otimes A^*$ is a 1-cocycle of $A^{*c}$ with coefficients in the representation $\Big(A\otimes A;L\otimes \Id+\Id \otimes (L+R)\Big)$, by \eqref{coboundary-condition}, for all $x,y\in A, \xi,\eta\in A^*$, we have
\begin{eqnarray*}
&& \langle \beta(\xi\ast_{A^*} \eta)-(\Id \otimes \huaL_{\eta}+\huaR_{\eta})\beta(\xi)-(L_{\xi}\otimes \Id)\beta(\eta), x\otimes y\rangle\\
&=& \langle\xi\ast_{A^*} \eta, x\circ_A y
\rangle+\langle \xi,x\circ_A (\huaR_{\eta}^* y) \rangle+\langle \xi,x \circ_A (\huaL_{\eta}^* y) \rangle+\langle \eta, (\huaL_{\xi}^* x)\circ_A y  \rangle\\
&=& \langle\xi\ast_{A^*} \eta, x\circ_A y\rangle+\langle L_x^*(\xi) \cdot_{A^*} \eta+\eta \cdot_{A^*} L_x^*(\xi)-L^*(\huaL^*_{\xi}x)\eta,y \rangle\\
&=& \langle \xi\otimes\eta, \alpha(x\circ_A y)+\sigma \alpha(x\circ_A y)-(L_x\otimes \Id)(\alpha(y)+\sigma\alpha(y))-(\Id\otimes R_y)\alpha(x)\rangle\\
&=& (L_{x\circ_A y}\otimes \Id-\Id \otimes (L_{x\circ_A y}+R_{x\circ_A y}))r+(\Id\otimes L_{x\circ_A y}-(L_{x\circ_A y}+R_{x\circ_A y})\otimes \Id)\sigma(r)\\
&&-(L_x\otimes \Id)(L_y\otimes \Id-\Id \otimes (L_y+R_y))r-(L_x\otimes \Id)(\Id\otimes L_y- (L_y+R_y)\otimes \Id)\sigma(r)\\
&&-(\Id\otimes R_y)(L_x\otimes \Id-\Id \otimes (L_y+R_y))r\\
&=& \Big( L_{x\circ_A y}\otimes \Id-\Id \otimes L_{x\circ_A y}-L_x L_y\otimes \Id+L_x\otimes L_y \Big)(r-\sigma r)\\
&=& 0.
\end{eqnarray*}
Thus, the proof is finished.
\end{proof}

For any linear map $\alpha:A\to A\otimes A$, let $J_{\alpha}:A\to A\otimes A\otimes A$ be a linear map given by
\begin{eqnarray}\label{J}
J_{\alpha}(x)=(\Id \otimes \alpha)\alpha(x)-(\alpha\otimes \Id)\alpha(x)-(\sigma \otimes \Id)(\alpha\otimes \Id)\alpha(x),\quad \forall x\in A.
\end{eqnarray}

\begin{lem}
Let A be a vector space and $\alpha:A\to A\otimes A$ a linear map. Then $\alpha^*:A^*\otimes A^*\to A^*$ defines a Zinbiel algebra structure on $A^*$ if and only if $J_{\alpha}=0.$
\end{lem}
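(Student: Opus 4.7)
The plan is to translate the defining Zinbiel identity for the dual product $\cdot_{A^*} = \alpha^*$ into a tensor identity on $A \otimes A \otimes A$ via the duality pairing, and then recognize the resulting expression as exactly $J_\alpha$. Throughout, since $A$ is assumed finite-dimensional, the canonical pairing $A^* \otimes A^* \otimes A^* \cong (A\otimes A\otimes A)^*$ is nondegenerate, so vanishing under all such pairings is equivalent to the vectorial vanishing.

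First I would write the Zinbiel identity for $(A^*,\cdot_{A^*})$, namely
\[
\xi \cdot_{A^*}(\eta\cdot_{A^*} \theta) \;=\; (\xi\cdot_{A^*}\eta)\cdot_{A^*}\theta \;+\; (\eta\cdot_{A^*}\xi)\cdot_{A^*}\theta,
\]
for arbitrary $\xi,\eta,\theta\in A^*$, and pair both sides with a generic $x\in A$. Using $\langle \xi\cdot_{A^*}\eta, x\rangle = \langle \xi\otimes\eta,\alpha(x)\rangle$, I would unfold the left-hand side, inserting $\alpha$ in the second tensor slot, to obtain
\[
\langle \xi\cdot_{A^*}(\eta\cdot_{A^*}\theta), x\rangle \;=\; \langle \xi\otimes\eta\otimes\theta,\, (\Id\otimes \alpha)\alpha(x)\rangle.
\]
Similarly, for the first summand on the right I would insert $\alpha$ in the first tensor slot to get
\[
\langle (\xi\cdot_{A^*}\eta)\cdot_{A^*}\theta, x\rangle \;=\; \langle \xi\otimes\eta\otimes\theta,\, (\alpha\otimes \Id)\alpha(x)\rangle,
\]
and for the second summand, after swapping the first two arguments,
\[
\langle (\eta\cdot_{A^*}\xi)\cdot_{A^*}\theta, x\rangle \;=\; \langle \eta\otimes\xi\otimes\theta,\, (\alpha\otimes \Id)\alpha(x)\rangle \;=\; \langle \xi\otimes\eta\otimes\theta,\, (\sigma\otimes \Id)(\alpha\otimes \Id)\alpha(x)\rangle,
\]
where the last equality uses that $\sigma$ acts on the first two factors as the adjoint of the swap on $A^*\otimes A^*$.

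Combining these three, the Zinbiel identity for $\alpha^*$ is equivalent to
\[
\langle \xi\otimes \eta\otimes\theta,\; (\Id\otimes \alpha)\alpha(x) - (\alpha\otimes \Id)\alpha(x) - (\sigma\otimes \Id)(\alpha\otimes \Id)\alpha(x)\rangle = 0
\]
for all $\xi,\eta,\theta\in A^*$ and $x\in A$. The bracketed expression is precisely $J_\alpha(x)$ by its definition \eqref{J}, so by nondegeneracy of the pairing the identity holds for all $\xi,\eta,\theta,x$ if and only if $J_\alpha(x) = 0$ for every $x\in A$, i.e.\ $J_\alpha = 0$.

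The only subtle point, and hence the main obstacle, is correctly identifying the $\sigma$ factor: one must verify that the adjoint of the transposition on $A^*\otimes A^*$ (used to rewrite $\eta\otimes \xi$ as $\sigma^*(\xi\otimes\eta)$) agrees with the $\sigma$ acting on the first two slots of $A\otimes A\otimes A$ appearing in \eqref{J}. This is a routine verification using $\langle \sigma^*(\xi\otimes\eta), a\otimes b\rangle = \langle \xi\otimes\eta, \sigma(a\otimes b)\rangle$, after which the identification of the three displayed pairings with $J_\alpha$ is immediate. No further calculation is required.
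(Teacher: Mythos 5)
Your proposal is correct and follows essentially the same route as the paper: pair the Zinbiel identity for $\cdot_{A^*}=\alpha^*$ against an arbitrary $x\in A$, unfold the three terms into $(\Id\otimes\alpha)\alpha(x)$, $(\alpha\otimes\Id)\alpha(x)$ and $(\sigma\otimes\Id)(\alpha\otimes\Id)\alpha(x)$, and conclude by nondegeneracy of the pairing. The identification of the swap on $A^*\otimes A^*$ with $\sigma$ acting on the first two tensor factors of $A\otimes A\otimes A$ is handled correctly.
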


\begin{proof}
Actually, for all $x\in A$ and $~\xi,\eta,\theta\in A^*$, we have
\begin{eqnarray*}
&&\langle \xi\cdot_{A^*} (\eta\cdot_{A^*}\theta)-(\xi\cdot_{A^*} \eta+\eta\cdot_{A^*}\xi ) \cdot_{A^*}\theta,x \rangle\\
&=& \langle \xi\otimes \eta \otimes \theta, (\Id \otimes \alpha)\alpha(x)-(\alpha\otimes \Id)\alpha(x)-(\sigma \otimes \Id)(\alpha\otimes \Id)\alpha(x) \rangle\\
&=& \langle \xi\otimes \eta \otimes \theta,J_{\alpha}(x) \rangle,
\end{eqnarray*}
which implies that $(A^*,\alpha^*=\cdot_{A^*})$ is a Zinbiel algebra if and only if $J_{\alpha}=0.$
\end{proof}

\begin{lem}\label{QQQ}
Let $(A,\circ_A)$ be a Zinbiel algebra. Writing $r=\sum_{i} a_i\otimes b_i\in A\otimes A$ with $a_i,b_i\in A$. If $\alpha:A\otimes A\otimes A$ is defined by \eqref{coboundary-condition}, then
we have
\begin{equation}\label{Q}
J_{\alpha}(x)=H(x)\llbracket r,r\rrbracket-\sum_{j}\Big( L_{x\circ_A a_j}\otimes \Id-\Id \otimes L_{x\circ_A a_j}-L_x L_{a_j}\otimes \Id+L_x\otimes L_{a_j} \Big)(r-\sigma r)\otimes b_j,
\end{equation}
where $\llbracket r,r\rrbracket=-r_{13}\circ r_{12}-r_{23}\circ r_{21}+r_{13}\ast r_{21}+r_{12}\ast r_{23}-r_{13}\ast r_{23}$ and $H(x)=L_x\otimes \Id\otimes \Id-\Id \otimes \Id \otimes (L_x+R_x)$, for all $x\in A.$
\end{lem}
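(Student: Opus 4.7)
The proof is a direct but lengthy computation. The plan is to expand each of the three terms defining $J_\alpha(x)$ using the coboundary formula \eqref{coboundary-condition}, and then to reorganize the resulting double sums into the two contributions on the right-hand side of \eqref{Q}.

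Writing $r = \sum_i a_i \otimes b_i$, the formula \eqref{coboundary-condition} reads
\begin{equation*}
\alpha(x) = \sum_i a_i \otimes (x \circ_A b_i + b_i \circ_A x) - \sum_i (x \circ_A a_i) \otimes b_i.
\end{equation*}
Substituting this into each of $(\Id \otimes \alpha)\alpha(x)$, $(\alpha \otimes \Id)\alpha(x)$, and $(\sigma \otimes \Id)(\alpha \otimes \Id)\alpha(x)$ produces double sums indexed by $i, j$ whose summands are tensor products of $a_i, a_j, b_i, b_j$ with iterated $\circ_A$-products against $x$. The first step is to classify these summands by the tensor slot in which $x$ appears: since $J_\alpha$ applies $\alpha$ twice, exactly one instance of $x$ appears in each summand.

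The summands in which $x$ sits alone in the first slot collect into an expression of the form $(L_x \otimes \Id \otimes \Id)\Xi$, while those where $x$ sits alone in the third slot collect into $-(\Id \otimes \Id \otimes (L_x + R_x))\Xi'$ for triple sums $\Xi, \Xi'$ in the $a_i, b_i$ alone. The crucial step is then to use the Zinbiel identity \eqref{Zin-identity} to rewrite expressions of the form $u \circ_A (v \circ_A w)$ as $(u \ast_A v)\circ_A w$, together with the left-commutativity \eqref{left-com}, to normalize all summands; this allows one to identify both $\Xi$ and $\Xi'$ with $\llbracket r, r\rrbracket$ as defined in the statement. This accounts for the $H(x)\llbracket r, r\rrbracket$ contribution.

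What remains are the summands in which $x$ is nested inside an iterated product in a way that cannot be absorbed into $H(x)$: these are precisely the places where \eqref{Zin-identity} and \eqref{left-com} leave an asymmetry between the first and second arguments of the outer $\circ_A$. Grouping these residual terms by the index $j$ of the outermost $b_j$, the remaining expression in the first two slots takes the form
\begin{equation*}
\bigl( L_{x \circ_A a_j} \otimes \Id - \Id \otimes L_{x \circ_A a_j} - L_x L_{a_j} \otimes \Id + L_x \otimes L_{a_j}\bigr)(r - \sigma r),
\end{equation*}
tensored with $b_j$ and summed over $j$; the appearance of $r - \sigma r$ reflects that the obstruction is symmetric under swapping $a_i, b_i$ precisely when $r$ is skew-symmetric.

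The main obstacle is bookkeeping: the two summands in \eqref{coboundary-condition}, applied twice and composed in three different ways, yield roughly eighteen families of summands that must be matched against the five terms of $\llbracket r, r\rrbracket$ multiplied by $H(x)$ plus the correction. I expect the cleanest route is to tabulate the summands by the position of $x$ and by which half of \eqref{coboundary-condition} is used at each level, then to apply \eqref{Zin-identity} and \eqref{left-com} systematically; no single identity is deep, but careful accounting is essential.
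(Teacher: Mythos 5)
Your plan coincides with the paper's proof: the paper expands $(\Id\otimes\alpha)\alpha(x)$, $(\alpha\otimes\Id)\alpha(x)$ and $(\sigma\otimes\Id)(\alpha\otimes\Id)\alpha(x)$ term by term from \eqref{coboundary-condition}, normalizes the nested products with \eqref{Zin-identity} and \eqref{left-com}, and sorts the summands into the $H(x)\llbracket r,r\rrbracket$ block and the $(r-\sigma r)$ correction, exactly as you describe. However, what you have written is a plan rather than a proof: the entire content of the lemma is the assertion that the dozen-odd families of summands actually recombine into those two specific expressions, and you never carry out that matching. In particular, the claim that the terms with $x$ isolated in the first or third slot assemble into $H(x)$ applied to \emph{the same} three-tensor $\llbracket r,r\rrbracket$, and that the leftover nested terms organize into precisely the stated operator applied to $r-\sigma r$ tensored with $b_j$, is exactly what needs to be verified and cannot be taken on faith — a sign error or a mismatched term would change the definition of the Zinbiel Yang--Baxter equation itself. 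To complete the argument you must write out the expansion (the paper records all twelve families explicitly), apply the two identities to the underlined nested terms, and exhibit the cancellation; until then the identity \eqref{Q} is asserted, not proved.
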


\begin{proof}
By \eqref{coboundary-condition} and \eqref{J}, for all $x\in A$, we have
\begin{eqnarray*}
&&J_{\alpha}(x)\\
&=&(\Id \otimes \alpha)\alpha(x)-(\alpha\otimes \Id)\alpha(x)-(\sigma \otimes \Id)(\alpha\otimes \Id)\alpha(x)\\
&=&\sum_{i,j}(x\circ_A a_i)\otimes (b_i\circ_A a_j)\otimes b_j-(x\circ_A a_i)\otimes a_j\otimes (b_i\ast_A b_j)-a_i\otimes (x\ast_A b_i)\circ_A a_j\otimes b_j\\
&&+a_i\otimes a_j\otimes (x\ast_A b_i)\ast_A b_j-(x\circ_A a_i)\circ_A a_j\otimes b_j\otimes b_i+a_j\otimes (x\circ_A a_i)\ast_A b_j\otimes b_i\\
&&+(a_i \circ_A a_j)\otimes b_j\otimes (x\ast_A b_i)-a_j\otimes (a_i\ast_A b_j)\otimes (x\ast_A b_i)-b_j\otimes (x\circ_A a_i)\circ_A a_j\otimes b_i\\
&&+(x\circ_A a_i)\ast_A b_j\otimes a_j\otimes b_i+b_j\otimes (a_i\circ_A a_j)\otimes (x\ast_A b_i)-(a_i\ast_A b_j)\otimes a_j\otimes (x\ast_A b_i)\\
&=& \sum_{i,j}(x\circ_A a_i)\otimes (b_i\circ_A a_j)\otimes b_j-(x\circ_A a_i)\otimes a_j\otimes (b_i\ast_A b_j)-\underline{a_i\otimes x\circ_A (b_i\circ_A a_j) \otimes b_j}\\
&&+a_i\otimes a_j\otimes x\ast_A (b_i\ast_A b_j)-(x\circ_A a_i)\circ_A a_j\otimes b_j\otimes b_i+a_j\otimes (x\circ_A a_i)\circ_A b_j\otimes b_i\\
&&+\underline{a_j\otimes b_j\circ_A(x\circ_A a_i)\otimes b_i}+(a_i \circ_A a_j)\otimes b_j\otimes (x\ast_A b_i)-a_j\otimes (a_i\ast_A b_j)\otimes (x\ast_A b_i)\\
&&-b_j\otimes (x\circ_A a_i)\circ_A a_j\otimes b_i+(x\circ_A a_i)\circ_A b_j\otimes a_j\otimes b_i+b_j \circ_A (x\circ_A a_i)\otimes a_j\otimes b_i\\
&&+b_j\otimes (a_i\circ_A a_j)\otimes (x\ast_A b_i)-(a_i\ast_A b_j)\otimes a_j\otimes (x\ast_A b_i)\\
&=& \sum_{i,j} \Big( (x\circ_A a_i)\otimes (b_i\circ_A a_j)\otimes b_j-(x\circ_A a_i)\otimes a_j\otimes (b_i\ast_A b_j)+a_i\otimes a_j\otimes x\ast_A (b_i\ast_A b_j)\\
&& +(a_i \circ_A a_j)\otimes b_j\otimes (x\ast_A b_i)-a_j\otimes (a_i\ast_A b_j)\otimes (x\ast_A b_i)+b_j \circ_A (x\circ_A a_i)\otimes a_j\otimes b_i\\
&& +b_j\otimes (a_i\circ_A a_j)\otimes (x\ast_A b_i)-(a_i\ast_A b_j)\otimes a_j\otimes (x\ast_A b_i)-x\circ_A (a_j\circ_A a_i)\otimes b_i\otimes b_j\\
&&+x\circ_A (a_j\circ_A b_i)\otimes a_i\otimes b_j+x\circ_A a_i\otimes a_j\circ_A b_i \otimes b_j
-x\circ_A b_i\otimes a_j\circ_A a_i\otimes b_j \Big)\\
&& -\Big(  (x\circ_A a_i)\circ_A a_j\otimes b_j\otimes b_i+a_i\otimes (x\circ_A a_j)\circ_A b_i\otimes b_j-b_i\otimes (x\circ_A a_j)\circ_A a_i\otimes b_j\\
&&+(x\circ_A a_j)\circ_A b_i\otimes a_i\otimes b_j+
+x\circ_A (a_j\circ_A a_i)\otimes b_i\otimes b_j-x\circ_A (a_j\circ_A b_i)\otimes a_i\otimes b_j\\
&&-x\circ_A a_i\otimes a_j\circ_A b_i \otimes b_j+x\circ_A b_i\otimes a_j\circ_A a_i\otimes b_j \Big)\\
&=& (L_x\otimes \Id \otimes \Id)(r_{12}\circ r_{23})-(L_x\otimes \Id \otimes \Id)(r_{13}\ast r_{23})+(\Id\otimes \Id \otimes (L_x+R_x))(r_{13}\ast r_{23})\\
&&+(\Id\otimes \Id \otimes (L_x+R_x))(r_{13}\circ r_{12})-(\Id\otimes \Id \otimes (L_x+R_x))(r_{23}\ast r_{12})+(L_x\otimes \Id \otimes \Id)(r_{21}\circ r_{13})\\
&&+(\Id\otimes \Id \otimes (L_x+R_x))(r_{23}\circ r_{21})-(\Id\otimes \Id \otimes (L_x+R_x))(r_{13}\ast r_{21})-(L_x\otimes \Id \otimes \Id)(r_{13}\circ r_{12})\\
&&+(L_x\otimes \Id \otimes \Id)(r_{13}\circ r_{21})+(L_x\otimes \Id \otimes \Id)(r_{23}\circ r_{12})-(L_x\otimes \Id \otimes \Id)(r_{23}\circ r_{21})\\
&& -\sum_{j}\Big( L_{x\circ_A a_j}\otimes \Id-\Id \otimes L_{x\circ_A a_j}-L_x L_{a_j}\otimes \Id+L_x\otimes L_{a_j} \Big)(r-\sigma(r))\otimes b_j\\
&=& H(x)\llbracket r,r\rrbracket-\sum_{j}\Big( L_{x\circ_A a_j}\otimes \Id-\Id \otimes L_{x\circ_A a_j}-L_x L_{a_j}\otimes \Id+L_x\otimes L_{a_j} \Big)(r-\sigma(r))\otimes b_j.
\end{eqnarray*}
Hence the conclusion holds.
\end{proof}

Based on the above observation, we have the following theorem, which gives a equivalent condition of a coboundary Zinbiel bialgebra.

\begin{thm}\label{equ-coboundary-Zin-bialg}
Let $A$ be a Zinbiel algebra. Writing $r$ as $r=a+\Lambda$ with $a\in \wedge^2 A$ and $ \Lambda\in S^2(A)$. Then the map $\alpha$ defined by \eqref{coboundary-condition} induces a Zinbiel algebra structure on $A^*$ such that $(A,A^*)$ is a Zinbiel bialgebra if and only if the following conditions hold for all $x,y\in A$:
\begin{itemize}
\item[{\rm(i)}] $\Big( L_{x\circ_A y}\otimes \Id-\Id \otimes L_{x\circ_A y}-L_x L_y\otimes \Id+L_x\otimes L_y \Big)(a)=0.$

\item[{\rm(ii)}] $H(x)\llbracket r,r\rrbracket=0,$
\end{itemize}
where $\llbracket r,r\rrbracket$ and $H(x)$ are defined by Lemma \ref{QQQ}.
\end{thm}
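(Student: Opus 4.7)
The plan is to assemble the result from the three preparatory statements that immediately precede it. The discussion just after the definition of a coboundary Zinbiel bialgebra observes that, when $\alpha$ has the form \eqref{coboundary-condition}, the $1$-cocycle condition on $\alpha$ is automatically satisfied; hence $(A,A^*,\alpha,\beta)$ is a Zinbiel bialgebra exactly when (a) $\alpha^*$ endows $A^*$ with a Zinbiel algebra structure, and (b) $\beta$ is a $1$-cocycle of $A^{*c}$ with values in the representation $\bigl(A\otimes A;\,L\otimes\Id+\Id\otimes(L+R)\bigr)$ attached to that structure.

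First I would reduce (b) to condition (i). By the proposition giving equation \eqref{beta-1-cocycle}, (b) is equivalent to
\begin{equation*}
\Big( L_{x\circ_A y}\otimes \Id-\Id \otimes L_{x\circ_A y}-L_x L_y\otimes \Id+L_x\otimes L_y \Big)(r-\sigma r)=0
\end{equation*}
for all $x,y\in A$. Writing $r=a+\Lambda$ with $a\in\wedge^2 A$ and $\Lambda\in S^2(A)$ gives $\sigma(\Lambda)=\Lambda$ and $\sigma(a)=-a$, so $r-\sigma r=2a$. Thus (b) is precisely condition (i).

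Next I would reduce (a) to condition (ii). The lemma stating that $\alpha^*$ defines a Zinbiel algebra structure if and only if $J_\alpha=0$ is the starting point, and Lemma \ref{QQQ} supplies the explicit formula
\begin{equation*}
J_{\alpha}(x)=H(x)\llbracket r,r\rrbracket-\sum_{j}\Big( L_{x\circ_A a_j}\otimes \Id-\Id \otimes L_{x\circ_A a_j}-L_x L_{a_j}\otimes \Id+L_x\otimes L_{a_j} \Big)(r-\sigma r)\otimes b_j.
\end{equation*}
The key observation is that the corrective sum is built out of exactly the operator appearing in (i), applied to $r-\sigma r$, with $y$ replaced successively by each $a_j$ and the result tensored with $b_j$. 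Once (i) holds, each summand vanishes and we are left with $J_{\alpha}(x)=H(x)\llbracket r,r\rrbracket$, so $J_\alpha=0$ becomes precisely condition (ii).

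The only mild subtlety, and the one I would flag as the main point to get right, concerns the order of implications in the backward direction: one must first use (i) to obtain (b), and only afterwards combine (i) with (ii) to extract (a) from Lemma \ref{QQQ}, because the reduction of $J_\alpha$ to $H(x)\llbracket r,r\rrbracket$ depends on (i) being available. Once this order is respected, the argument is a direct chaining of the lemmas already established and no further computation is required.
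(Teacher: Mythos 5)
Your proposal is correct and follows exactly the route the paper intends: the theorem is stated with no separate proof precisely because it is the assembly of the automatic $1$-cocycle property of $\alpha$, the proposition characterizing the $\beta$-cocycle condition via \eqref{beta-1-cocycle} (which equals condition (i) since $r-\sigma r=2a$), and Lemma \ref{QQQ} combined with the criterion $J_\alpha=0$. Your remark about needing (i) first before reducing $J_\alpha$ to $H(x)\llbracket r,r\rrbracket$ is the right point to flag, and the argument is complete.
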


The equation $\llbracket r,r\rrbracket=-r_{13}\circ r_{12}-r_{23}\circ r_{21}+r_{13}\ast r_{21}+r_{12}\ast r_{23}-r_{13}\ast r_{23}=0$ is called the {\bf Zinbiel Yang-Baxter equation}, as an analogue of the classical Yang-Baxter Equation.

\section{Quasi-triangular and factorizable Zinbiel bialgebras}\label{sec:factorizable}

In this section, we introduce the notions of quasi-triangular Zinbiel bialgebras and factorizable Zinbiel bialgebras. We show that the double of a Zinbiel bialgebra enjoys a factorizable Zinbiel bialgebra naturally.

\subsection{Quasi-triangular Zinbiel bialgebras and relative Rota-Baxter operators}

For $r\in A\otimes A,$ we define $r_+,r_-:A^*\to A$ by
\begin{eqnarray}
\langle r_+(\xi), \eta\rangle=r(\xi,\eta)=\langle \xi, r_-(\eta) \rangle, \quad \forall \xi,\eta\in A^*.
\end{eqnarray}

Then the Zinbiel algebra structure on $A^*$ defined by \eqref{coboundary-condition} in Theorem \ref{equ-coboundary-Zin-bialg} is given by
\begin{eqnarray}\label{Ar}
\xi \cdot_r \eta=-(L_{r_+(\xi)}^*+R_{r_+(\xi)}^*)\eta+R_{r_{-}(\eta)}^* \xi,\quad \forall \xi,\eta\in A^*.
\end{eqnarray}

Now we introduce the notion of  $(L,L+R)$-invariance of a 2-tensor $r\in A\otimes A$, which is the main ingredient in the definition of a quasi-triangular Zinbiel bialgebra.
\begin{defi}
Let $(A,\circ_A)$ be a Zinbiel algebra and $r\in A\otimes A.$ Then $r$ is  {\bf $(L,L+R)$-invariant} if
\begin{equation}
\Big(L_x \otimes \Id-\Id\otimes (L_x+R_x) \Big)r=0, \quad \forall x \in A.
\end{equation}
\end{defi}

\begin{lem}\label{invariance1}
Let $(A,\circ_A)$ be a Zinbiel algebra and $r\in A\otimes A.$ Then $r$ is $(L,L+R)$-invariant if and only if
$$r_+(L_x^{\ast}\xi)+x \ast_A r_+(\xi)=0, \quad \forall x\in A,\xi \in A^{\ast}. $$
\end{lem}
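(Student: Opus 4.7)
The plan is to translate the tensor-level invariance condition into a scalar identity on $A^*$ by pairing against an arbitrary test covector in the second slot and recognizing the definitions of $r_+$ and $L_x^\ast$. Since the statement is a simple equivalence of two ways of writing the same condition, the strategy is essentially a bookkeeping unpacking rather than a structural argument; no Zinbiel identity \eqref{Zin-identity} is needed.

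First I would write $r = \sum_i a_i \otimes b_i$ and compute the two summands of $(L_x \otimes \Id - \Id \otimes (L_x+R_x))r$ separately as
$\sum_i (x \circ_A a_i) \otimes b_i$ and $\sum_i a_i \otimes (x \ast_A b_i)$,
using the definition \eqref{Zin-com} of the sub-adjacent product to combine $L_x + R_x$ into $x \ast_A (\cdot)$ in the second factor. Next I would pair each summand with $\xi \otimes \eta$ for arbitrary $\xi, \eta \in A^*$. For the first, using the coregular definition $\langle L_x^\ast \xi, y\rangle = -\langle \xi, x\circ_A y\rangle$ and the defining formula $r_+(\zeta) = \sum_i \langle \zeta, a_i\rangle b_i$, one obtains
$\sum_i \langle \xi, x\circ_A a_i\rangle \langle \eta, b_i\rangle = -\langle r_+(L_x^\ast \xi), \eta\rangle$.
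For the second, pulling $\xi$ through the first factor gives $\sum_i \langle \xi, a_i\rangle \langle \eta, x\ast_A b_i\rangle = \langle \eta, x\ast_A r_+(\xi)\rangle$ by $\K$-linearity of $x \ast_A (\cdot)$.

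Combining these two computations, the invariance $(L_x\otimes \Id - \Id\otimes (L_x+R_x))r = 0$ is equivalent to the scalar identity $\langle r_+(L_x^\ast \xi) + x \ast_A r_+(\xi),\, \eta\rangle = 0$ holding for all $\xi,\eta \in A^*$ and $x \in A$; nondegeneracy of the natural pairing between $A$ and $A^*$ then yields the stated equation. The only point requiring care is the sign convention in the coregular action, where the minus sign from $\langle L_x^\ast \xi, y\rangle = -\langle \xi, x\circ_A y\rangle$ conspires with the minus sign from the invariance condition to produce the $+$ between the two summands in the conclusion; beyond this, no genuine obstacle is expected.
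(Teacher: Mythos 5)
Your proposal is correct and is essentially the paper's own argument run in the opposite direction: the paper starts from $\langle r_+(L_x^{\ast}\xi)+x\ast_A r_+(\xi),\eta\rangle$ and unwinds it via adjointness to $-\langle(L_x\otimes\Id-\Id\otimes(L_x+R_x))r,\xi\otimes\eta\rangle$, while you expand the tensor in a decomposition $r=\sum_i a_i\otimes b_i$ and pair against $\xi\otimes\eta$; the sign bookkeeping you describe matches the paper's. No gap.
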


\begin{proof}
By a direct calculation, for all $x\in A, \xi,\eta \in A^{\ast}$, we have
\begin{eqnarray*}
\langle  r_+(L_x^{\ast}\xi)+x \ast_A r_+(\xi),\eta \rangle
&=& \langle r, L_x^{\ast}\xi\otimes\eta\rangle-\langle r, \xi\otimes (L_x^*+R_x^*)\eta    \rangle\\
&=& - \langle  (L_x \otimes \Id)r,\xi \otimes \eta   \rangle+\langle  (\Id\otimes (L_x+R_x)  )r,\xi \otimes \eta   \rangle\\
&=& - \langle  \Big(L_x \otimes \Id-\Id\otimes (L_x+R_x) \Big)r,  \xi \otimes \eta   \rangle,
\end{eqnarray*}
which implies that $r$ is $(L,L+R)$-invariant if and only if $r_+(L_x^{\ast}\xi)+x \ast_A r_+(\xi)=0$.
\end{proof}

\begin{pro}
Let $(A,\circ_{A})$ be a Zinbiel algebra and $r\in A\otimes
A.$ If the skew-symmetric part $a$ of $r$ is $(L,L+R)$-invariant,
then we have $\Big( L_{x\circ_A y}\otimes \Id-\Id \otimes L_{x\circ_A y}-L_x L_y\otimes \Id+L_x\otimes L_y \Big)(a)=0,$ for all $x,y\in A$.
\end{pro}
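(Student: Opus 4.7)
The plan is to work directly with the invariance relation $(L_x\otimes \Id)(a) = (\Id\otimes(L_x+R_x))(a)$ and reduce the four-term expression to a single identity that is forced by the Zinbiel axiom together with equation~\eqref{left-com}. The key observation is that two of the four terms can be collapsed against each other using invariance, and the other two can also be collapsed, leaving a single residual equation.

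First I would rewrite the two ``diagonal'' terms. Applying invariance at $y$ in the first slot gives
\[
(L_x L_y\otimes \Id)(a) = (L_x\otimes \Id)(\Id\otimes(L_y+R_y))(a) = (L_x\otimes L_y)(a) + (L_x\otimes R_y)(a),
\]
so $(L_x L_y\otimes \Id - L_x\otimes L_y)(a) = (L_x\otimes R_y)(a)$. Similarly, applying invariance at $x\circ_A y$ yields
\[
(L_{x\circ_A y}\otimes \Id - \Id\otimes L_{x\circ_A y})(a) = (\Id\otimes R_{x\circ_A y})(a).
\]
So the whole statement reduces to the single identity
\[
(\Id\otimes R_{x\circ_A y})(a) = (L_x\otimes R_y)(a).
\]

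To prove this residual identity, I would start from the invariance equation $(L_x\otimes \Id)(a) = (\Id\otimes(L_x+R_x))(a)$ and apply $\Id\otimes R_y$ to both sides, obtaining $(L_x\otimes R_y)(a)$ on the left and, on the right, a sum of terms of the form $u_i\otimes((x\circ_A v_i)\circ_A y)$ and $u_i\otimes((v_i\circ_A x)\circ_A y)$ (where $a=\sum u_i\otimes v_i$). The Zinbiel identity~\eqref{Zin-identity} combines these into $u_i\otimes x\circ_A(v_i\circ_A y)$, and then the left-commutativity relation~\eqref{left-com} rewrites this as $u_i\otimes v_i\circ_A(x\circ_A y) = (\Id\otimes R_{x\circ_A y})(a)$, closing the loop.

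The only mildly delicate point is the order in which the two Zinbiel identities must be applied in the last step: first the defining relation~\eqref{Zin-identity} to turn a sum of two associator-like terms back into a single left-multiplication $x\circ_A(v_i\circ_A y)$, and then the consequence~\eqref{left-com} to swap $x$ past $v_i$. Note that skew-symmetry of $a$ is not actually needed here, only the invariance hypothesis; so the statement remains true for any $(L,L+R)$-invariant tensor, not merely for the skew-symmetric component, which is a useful sanity check on the computation.
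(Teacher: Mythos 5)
Your proof is correct. The two reductions via invariance are valid: applying $(L_x\otimes\Id)$ to the invariance relation at $y$ gives $(L_xL_y\otimes\Id)(a)=(L_x\otimes(L_y+R_y))(a)$, and invariance at $x\circ_A y$ gives $(L_{x\circ_A y}\otimes\Id-\Id\otimes L_{x\circ_A y})(a)=(\Id\otimes R_{x\circ_A y})(a)$, so the claim does reduce to $(\Id\otimes R_{x\circ_A y})(a)=(L_x\otimes R_y)(a)$; and your derivation of that residual identity — apply $\Id\otimes R_y$ to the invariance relation, then use \eqref{Zin-identity} to combine $(x\circ_A v_i)\circ_A y+(v_i\circ_A x)\circ_A y$ into $x\circ_A(v_i\circ_A y)$ and \eqref{left-com} to rewrite it as $v_i\circ_A(x\circ_A y)$ — is sound.

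The route differs from the paper's in presentation rather than substance. The paper dualizes: it evaluates the four-term operator against $\xi\otimes\eta$, converts everything to statements about the contraction $a_+:A^*\to A$ via Lemma \ref{invariance1} (i.e.\ $a_+(L_x^*\xi)=-x\ast_A a_+(\xi)$), and reduces to the element-level identity $(a_+(\xi)\ast_A x)\circ_A y=(x\ast_A a_+(\xi))\circ_A y$, which holds by commutativity of $\ast_A$ after one application of \eqref{Zin-identity}. You stay entirely at the level of $2$-tensors and operators, which avoids the pairing bookkeeping and isolates a single clean residual identity; both arguments invoke invariance three times and rest on the same Zinbiel axioms. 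Your closing observation that skew-symmetry of $a$ is never used is accurate and applies equally to the paper's proof, since Lemma \ref{invariance1} and the contraction $a_+$ make sense for any $(L,L+R)$-invariant tensor.
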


\begin{proof}
Since the skew-symmetric part $a$ of $r$ is $(L,L+R)$-invariant, by Lemma \ref{invariance1}, we have
\begin{eqnarray*}
&&\Big( L_{x\circ_A y}\otimes \Id-\Id \otimes L_{x\circ_A y}-L_x L_y\otimes \Id+L_x\otimes L_y \Big)a(\xi,\eta)\\
&=& -a(L_{x\circ_A y}^*\xi, \eta)+a(\xi,L_{x\circ_A y}^*\eta)-a(L_y^* L_x^* \xi, \eta)+a(L_x^*\xi, L_y^*\eta)\\
&=& -\langle a_+(L_{x\circ_A y}^*\xi), \eta\rangle+\langle a_+(\xi),L_{x\circ_A y}^*\eta\rangle-\langle  a_+(L_y^* L_x^* \xi), \eta\rangle +\langle  a_+(L_x^*\xi), L_y^*\eta   \rangle\\
&=& \langle(x\circ_A y)\ast_A a_+(\xi)- (x\circ_A y)\circ_A a_+(\xi)- y\ast_A(x \ast_A a_+(\xi)) + y\circ_A (x \ast_A a_+(\xi) ),\eta\rangle\\
&=& \langle a_+(\xi)\circ_A (x \circ_A y)-(x\ast_A a_+(\xi))\circ_A y,\eta \rangle\\
&=& \langle (a_+(\xi)\ast_A x) \circ_A y-(x\ast_A a_+(\xi))\circ_A y,\eta \rangle\\
&=& 0,
\end{eqnarray*}
for all $x,y\in A$ and $\xi,\eta \in A^*$.
\end{proof}

\begin{defi}
Let $(A,\circ_A)$ be a Zinbiel algebra. If $r\in A\otimes A$ satisfies $\llbracket r,r\rrbracket=0$ and the skew-symmetric part $a$ of $r$ is $(L,L+R)$-invariant, then the Zinbiel bialgebra $(A,A_{r}^{\ast})$ induced by $r$ is called a {\bf quasi-triangular Zinbiel bialgebra}.
Moreover, if $r$ is symmetric, then $(A,A_{r}^{\ast})$ is called a {\bf triangular Zinbiel bialgebra}.
\end{defi}

Denote by $I$ the operator
\begin{eqnarray}\label{I}
I=r_{+}-r_{-}:A^{\ast}\longrightarrow A.
\end{eqnarray}

Note that $I^{\ast}=-I$. Actually, $\frac{1}{2} I$ is the contraction of the skew-symmetric part $a$ of $r$, which means that $\frac{1}{2} \langle I(\xi),\eta \rangle=a(\xi,\eta).  $
If $r$ is symmetric, then $I=0$.

Now we give another characterization of the $(L,L+R)$-invariant condition.

\begin{pro}\label{invariance2}
Let $(A,\circ_{A})$ be a Zinbiel algebra and $r\in A\otimes
A.$ The skew-symmetric part $a$ of $r$ is $(L,L+R)$-invariant if
and only if $I\circ L_x^{\ast}=-(L_x+R_x) \circ I$, or $ L_x\circ I =-I
\circ (L_x^*+R_x^*),$  for all $x\in A$,  where $I$ is
given by \eqref{I}.
\end{pro}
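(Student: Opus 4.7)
The plan is to reduce the proposition to Lemma \ref{invariance1}. The key observation I would establish first is that $I = 2a_+$, where $a$ is the skew-symmetric part of $r$. Writing $r = a + \Lambda$ with $a$ skew-symmetric and $\Lambda$ symmetric, and noting the general identity $(\sigma s)_+ = s_-$ for any $s \in A\otimes A$, the skew-symmetry $\sigma(a) = -a$ gives $a_- = -a_+$, while $\sigma(\Lambda) = \Lambda$ gives $\Lambda_- = \Lambda_+$. Hence $I = r_+ - r_- = 2a_+$, which is consistent with the remark already made in the text that $\tfrac{1}{2} I$ is the contraction of $a$.

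With this identification in hand, I would apply Lemma \ref{invariance1} directly to the 2-tensor $a$: the statement that $a$ is $(L,L+R)$-invariant is equivalent to $a_+(L_x^{\ast}\xi) + x \ast_A a_+(\xi) = 0$ for all $x\in A$ and $\xi \in A^{\ast}$. Substituting $a_+ = \tfrac{1}{2} I$ and using $x \ast_A y = (L_x + R_x)y$, this reads $I(L_x^{\ast}\xi) + (L_x + R_x) I(\xi) = 0$, which is precisely $I \circ L_x^{\ast} = -(L_x + R_x) \circ I$.

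For the equivalence with the second form $L_x \circ I = -I \circ (L_x^{\ast} + R_x^{\ast})$, the cleanest route is to take adjoints of the first identity. Since $I^{\ast} = -I$, the adjoint of $I \circ L_x^{\ast}$ is $L_x \circ I^{\ast} = -L_x \circ I$, while the adjoint of $-(L_x + R_x) \circ I$ is $-I^{\ast} \circ (L_x^{\ast} + R_x^{\ast}) = I \circ (L_x^{\ast} + R_x^{\ast})$. Equating and multiplying through by $-1$ yields the stated identity, and reversing the argument shows the implication goes both ways.

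The whole argument is essentially a bookkeeping exercise in dualization, and no substantive obstacle arises beyond tracking sign conventions for $r_+$, $r_-$, and the decomposition $r = a + \Lambda$. The only point that deserves a moment's care is the step $I = 2a_+$, since without this normalization one would be off by a factor of two in converting Lemma \ref{invariance1} into the form involving $I$.
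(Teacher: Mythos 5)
Your proposal is correct and follows essentially the same route as the paper: both reduce to Lemma \ref{invariance1} via the identification $a_+=\tfrac{1}{2}I$ and then pass to the second form of the identity by taking adjoints using $I^{\ast}=-I$. Your explicit justification of $I=2a_+$ via the decomposition $r=a+\Lambda$ is a slightly more detailed version of what the paper simply asserts.
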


\begin{proof}
By Lemma \ref{invariance1}, the skew-symmetric part $a$ of $r$ is
$(L,L+R)$-invariant if and only if $a_+(L_x^{\ast}\xi)+x \ast_A a_+(\xi)=0.$ Note that  $a_{+}=\frac{1}{2} I$. Thus  the
skew-symmetric part $a$  is $(L,\ad)$-invariant if and only if
$I\circ L_x^{\ast}=-(L_x+R_x) \circ I.$  By $I^{\ast}=-I$,  $I\circ
L_x^{\ast}=-(L_x+R_x) \circ I$ if and only if  $ L_x\circ I =-I \circ (L_x^*+R_x^*)$.
\end{proof}

\begin{lem}
Let $(A,\circ_{A})$ be a Zinbiel algebra and $r\in A\otimes A$. Let $r=a+\Lambda$ with $a\in\wedge^2 A$ and $\Lambda \in S^2 (A)$. If $a$ is $(L,L+R)$-invariant, then
\begin{eqnarray}\label{eq:invariant 2}
\langle \xi, x\circ_A a_+(\eta) \rangle+\langle \eta,x\ast_A a_+(\xi)  \rangle=0, \quad \forall
x \in A, \xi,\eta \in A^{\ast}.
\end{eqnarray}
Furthermore, the Zinbiel algebra multiplication $\cdot_{r}$ on $A^{\ast}$ given by \eqref{Ar} reduces to
\begin{eqnarray}\label{r}
    \xi \cdot_{r} \eta= -(L_{\Lambda_{+}(\xi)}^*+ R_{\Lambda_{+}(\xi)}^*)\eta+R_{\Lambda_{+}(\eta)}^{\ast} \xi,\quad \forall \xi,\eta \in A^{\ast}.
\end{eqnarray}
\end{lem}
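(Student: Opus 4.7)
The plan is to derive the pairing identity \eqref{eq:invariant 2} directly from the $(L,L+R)$-invariance of $a$ by Lemma \ref{invariance1}, and then use this identity to show that in the formula \eqref{Ar} for $\cdot_r$, the contributions of the skew-symmetric part $a_+$ cancel, leaving only $\Lambda_+$.

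For the first assertion, I would start from Lemma \ref{invariance1} applied to $a$: the $(L,L+R)$-invariance is equivalent to
\[
a_+(L_x^{\ast}\xi)+x\ast_A a_+(\xi)=0,\qquad \forall x\in A,\ \xi\in A^{\ast}.
\]
Since $a\in\wedge^2 A$ is skew-symmetric, the standard relation $a_-=-a_+$ holds. Pairing $a_+(L_x^{\ast}\xi)$ with $\eta$ and rewriting via $\langle a_+(L_x^*\xi),\eta\rangle=a(L_x^*\xi,\eta)=\langle L_x^*\xi,a_-(\eta)\rangle=-\langle L_x^*\xi, a_+(\eta)\rangle$, and then invoking the sign convention $\langle L_x^*\xi,y\rangle=-\langle \xi,x\circ_A y\rangle$, I obtain $\langle a_+(L_x^*\xi),\eta\rangle=\langle \xi,x\circ_A a_+(\eta)\rangle$. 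Combined with the invariance identity $a_+(L_x^*\xi)=-x\ast_A a_+(\xi)$ it gives $\langle \xi, x\circ_A a_+(\eta)\rangle=-\langle \eta, x\ast_A a_+(\xi)\rangle$, which is \eqref{eq:invariant 2}.

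For the second assertion, I would decompose $r=a+\Lambda$, so that $r_+=a_++\Lambda_+$ and, using $a_-=-a_+$ and $\Lambda_-=\Lambda_+$, also $r_-=-a_++\Lambda_+$. Substituting into \eqref{Ar} yields
\[
\xi\cdot_r\eta=-\bigl(L^{\ast}_{\Lambda_+(\xi)}+R^{\ast}_{\Lambda_+(\xi)}\bigr)\eta+R^{\ast}_{\Lambda_+(\eta)}\xi\;-\;\bigl(L^{\ast}_{a_+(\xi)}+R^{\ast}_{a_+(\xi)}\bigr)\eta\;-\;R^{\ast}_{a_+(\eta)}\xi.
\]
It then suffices to show the three $a_+$-terms sum to zero. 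Pairing against an arbitrary $z\in A$ and using the sign conventions for $L^*$ and $R^*$, this reduces to
\[
\langle \eta,\,a_+(\xi)\ast_A z\rangle+\langle \xi,\,z\circ_A a_+(\eta)\rangle=0,
\]
which is exactly \eqref{eq:invariant 2} applied with $x=z$ (using commutativity of $\ast_A$, so $a_+(\xi)\ast_A z=z\ast_A a_+(\xi)$). This yields \eqref{r}.

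The whole argument is essentially bookkeeping of the sign conventions; the only subtle point — and the step where one is most likely to slip — is keeping track of the minus signs arising both from the paper's definition $\langle L^*_x\xi,y\rangle=-\langle\xi,x\circ_A y\rangle$ and from the skew-symmetry relation $a_-=-a_+$, so that the two independent sign flips combine correctly to produce the stated identity rather than its opposite. Once \eqref{eq:invariant 2} is in hand, the reduction to \eqref{r} is a one-line pairing computation exploiting commutativity of $\ast_A$.
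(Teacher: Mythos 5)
Your proposal is correct and follows essentially the same route as the paper: it derives \eqref{eq:invariant 2} from the characterization of $(L,L+R)$-invariance of $a$ (the identity $a_+(L_x^{\ast}\xi)+x\ast_A a_+(\xi)=0$, together with $a_-=-a_+$ and the sign convention for $L_x^{\ast}$), and then substitutes $r_{\pm}=\Lambda_+\pm a_+$ into \eqref{Ar} and cancels the three $a_+$-terms by pairing against an element of $A$ and invoking \eqref{eq:invariant 2} with commutativity of $\ast_A$. All signs check out, so nothing further is needed.
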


\begin{proof}
Since the skew-symmetric part $a$ of $r$  is $(L,L+R)$-invariant, by Proposition \ref{invariance2}, we have
\begin{eqnarray*}
\langle \xi, x\circ_A a_+(\eta) \rangle+\langle \eta,x\ast_A a_+(\xi)  \rangle
&=&-\langle L_x^{\ast}\xi , a_+(\eta)\rangle+\langle \eta,x\ast_A a_+(\xi)  \rangle\\
&=&\langle \eta, a_+(L_x^{\ast}\xi)+x\ast_A a_+(\xi) \rangle=0,
\end{eqnarray*}
which implies that \eqref{eq:invariant 2} holds.

Moreover, by \eqref{eq:invariant 2}, we have
\begin{eqnarray*}
\langle -(L_{a_{+}(\xi)}^*+ R_{a_{+}(\xi)}^*)\eta -R_{a_+(\eta)}^{\ast} \xi , x \rangle
=\langle \eta,a_+(\xi)\ast_A x  \rangle+\langle \xi,x\circ_A  a_+(\eta) \rangle
= 0.
\end{eqnarray*}
Then by \eqref{Ar}, we have
\begin{eqnarray*}
\xi \cdot_{r} \eta
&=& -(L_{r_{+}(\xi)}^{\ast}+R_{r_{+}(\xi)}^{\ast}) \eta+R_{r_{-}(\eta)}^{\ast} \xi\\
&=& -(L_{\Lambda_{+}(\xi)}^{\ast}+R_{\Lambda_{+}(\xi)}^{\ast}) \eta-(L_{a_{+}(\xi)}^{\ast}+R_{a_{+}(\xi)}^{\ast}) \eta
+R_{\Lambda_{+}(\eta)}^{\ast} \xi-R_{a_{+}(\eta)}^{\ast} \xi\\
&=& -(L_{\Lambda_{+}(\xi)}^{\ast}+R_{\Lambda_{+}(\xi)}^{\ast}) \eta+R_{\Lambda_{+}(\eta)}^{\ast} \xi,
\end{eqnarray*}
which implies that \eqref{r} holds.
\end{proof}

\begin{thm}\label{heart}
Let $(A,\circ_A)$ be a Zinbiel algebra and $r\in A\otimes A$. Assume that the skew-symmetric part $a$ of $r $ is $(L,L+R)$-invariant. Then $r$ satisfies $\llbracket r,r\rrbracket=0$ if and only if $(A^{\ast},\cdot_{r})$ is a Zinbiel algebra and
the linear maps $r_{+},r_{-}:(A^{\ast},\cdot_{r}) \longrightarrow (A,\circ_A)$ are both Zinbiel algebra homomorphisms.
\end{thm}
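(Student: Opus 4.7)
The plan is to prove the biconditional through a direct identity that equates the homomorphism defect of $r_-$ with a contraction of $\llbracket r, r\rrbracket$, combined with a preliminary observation that $r_+$ and $r_-$ have the same defect under the assumed invariance. Under the $(L,L+R)$-invariance of $a$, the first condition of Theorem \ref{equ-coboundary-Zin-bialg} is automatic (by the proposition preceding Definition \ref{quadratic-Zinbiel-alg}), so $\llbracket r, r\rrbracket = 0$ alone gives a Zinbiel bialgebra structure via Theorem \ref{equ-coboundary-Zin-bialg}, and in particular $(A^*, \cdot_r)$ is a Zinbiel algebra.

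Next I would prove that under the invariance,
\[
r_+(\xi \cdot_r \eta) - r_+(\xi) \circ_A r_+(\eta) \;=\; r_-(\xi \cdot_r \eta) - r_-(\xi) \circ_A r_-(\eta), \qquad \forall \xi, \eta \in A^*,
\]
by writing $r_+ = r_- + I$ and computing
\[
I(\xi \cdot_r \eta) \;=\; r_+(\xi) \circ_A I(\eta) + I(\xi) \circ_A r_-(\eta).
\]
This uses Proposition \ref{invariance2} in the form $I \circ L^*_x = -(L_x + R_x) \circ I$, together with its companion $I \circ R^*_x = R_x \circ I$, which is obtained by taking the adjoint of the invariance identity and using $I^* = -I$.

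The main step is to establish the identity
\[
r_-(\xi) \circ_A r_-(\eta) - r_-(\xi \cdot_r \eta) \;=\; \langle \llbracket r, r \rrbracket,\; \cdot \otimes \xi \otimes \eta \rangle, \qquad \forall \xi, \eta \in A^*,
\]
where the right-hand side denotes the contraction of $\llbracket r, r\rrbracket \in A^{\otimes 3}$ against $\xi, \eta$ in the second and third tensor slots, producing an element of $A$. To prove this I would write $r = \sum_i a_i \otimes b_i$, substitute \eqref{Ar}, expand the left-hand side as a double sum over index pairs $(i,j)$, and match with the five summed products defining $\llbracket r, r\rrbracket$. A crucial intermediate reduction is $r_+(L^*_{r_-\eta}\xi) - r_-(L^*_{r_-\eta}\xi) = I(L^*_{r_-\eta}\xi) = -r_-(\eta) \ast_A I\xi$, which absorbs the $r_+$-contribution into the commutative structure $\ast_A$.

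Granting these two identities, both directions of the theorem follow immediately: if $\llbracket r, r \rrbracket = 0$, then the right-hand side of the main identity vanishes, so $r_-$ and hence $r_+$ is a Zinbiel algebra homomorphism; conversely, if $r_-$ is a homomorphism then the main identity forces the contraction $\langle \llbracket r, r \rrbracket, \cdot \otimes \xi \otimes \eta\rangle$ to vanish for every $\xi, \eta \in A^*$, and since such contractions over all $\xi, \eta$ determine the three-tensor $\llbracket r, r \rrbracket$ completely, we conclude $\llbracket r, r \rrbracket = 0$. The main obstacle is the tensor bookkeeping for the main identity: the expansion of the left-hand side produces a sum of terms indexed by pairs $(i,j)$ that only matches the five summands of $\llbracket r, r \rrbracket$ after the invariance identities are applied several times to handle the asymmetry between $L^*_x$ and $R^*_x$ appearing in \eqref{Ar}.
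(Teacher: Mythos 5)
Your proposal follows essentially the same strategy as the paper: the heart of both arguments is an identity equating the homomorphism defect of $r_\pm$ with a contraction of $\llbracket r,r\rrbracket$ against $\xi$ and $\eta$, from which both directions of the equivalence follow at once. The differences are organizational. The paper works directly with $r_{+}$, decomposes $r=a+\Lambda$ so that the cross terms $\llbracket a,\Lambda\rrbracket+\llbracket\Lambda,a\rrbracket$ vanish and the product $\cdot_{r}$ reduces to the $\Lambda_{+}$-only formula \eqref{r}, and then verifies \eqref{333} by pairing with a third covector; the case of $r_{-}$ is dismissed with ``similarly.'' You instead expand $r=\sum_i a_i\otimes b_i$ in a basis and work with $r_{-}$, but you supply two genuinely useful additions: (1) an explicit justification that $(A^{\ast},\cdot_{r})$ is a Zinbiel algebra in the forward direction, via the automatic validity of condition (i) of Theorem \ref{equ-coboundary-Zin-bialg} under the invariance hypothesis (a point the paper's proof passes over in silence); and (2) the identity $I(\xi\cdot_{r}\eta)=r_{+}(\xi)\circ_A I(\eta)+I(\xi)\circ_A r_{-}(\eta)$, which follows from Proposition \ref{invariance2} together with the companion relation $I\circ R_x^{\ast}=R_x\circ I$ and shows that the defects of $r_{+}$ and $r_{-}$ coincide exactly -- a cleaner replacement for the paper's ``similarly.'' One correction to your main identity: the free slot should be the \emph{middle} one, i.e. the defect equals $\pm\llbracket r,r\rrbracket(\xi,\cdot,\eta)$ as in \eqref{333}, not $\langle\llbracket r,r\rrbracket,\cdot\otimes\xi\otimes\eta\rangle$; since $\llbracket r,r\rrbracket$ contains the term $\langle\theta,a_{+}(\xi)\circ_A a_{+}(\eta)\rangle$, it is not symmetric in its first two arguments, so the two contractions genuinely differ when $a\neq 0$. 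This does not damage your argument -- any contraction pattern that ranges over all $\xi,\eta$ with one slot free still detects the vanishing of the full three-tensor in both directions -- but the bookkeeping you flag as the main obstacle will only close up with the slots in the paper's arrangement.
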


\begin{proof}
By a direct calculation, for all $\xi,\eta,\theta\in A^*,$ we have
\begin{eqnarray*}
\llbracket \Lambda,\Lambda\rrbracket(\xi,\eta,\theta)
=\langle \xi,\Lambda_+(\eta)\circ_A \Lambda_+(\theta)  \rangle+\langle \eta,\Lambda_+(\xi)\circ_A \Lambda_+(\theta) \rangle-\langle \theta,\Lambda_+(\xi)\ast_A \Lambda_+(\eta) \rangle.
\end{eqnarray*}
Moreover, by \eqref{eq:invariant 2}, we have
$$ \llbracket a,a\rrbracket(\xi,\eta,\theta)=\langle \eta,a_+(\xi)\circ_A a_+(\theta)\rangle,  $$
and
$$\llbracket a,\Lambda\rrbracket(\xi,\eta,\theta)+\llbracket \Lambda,a \rrbracket(\xi,\eta,\theta)=0,$$
which implies that
\begin{eqnarray*}
\llbracket r,r\rrbracket(\xi,\eta,\theta)=\llbracket \Lambda+a, \Lambda+a \rrbracket(\xi,\eta,\theta)
=\llbracket \Lambda,\Lambda\rrbracket(\xi,\eta,\theta)+\llbracket a,a\rrbracket(\xi,\eta,\theta).
\end{eqnarray*}
Next we show that the following equation holds
\begin{eqnarray}\label{333}
r_+(\xi \cdot_r \eta)-r_+(\xi)\circ_A r_+(\eta)=-\llbracket r,r\rrbracket (\xi,\cdot,\eta).
\end{eqnarray}
On the one hand, by \eqref{r}, we have
\begin{eqnarray*}\label{111}
&&\langle \theta, r_+(\xi \cdot_r \eta)-r_+(\xi)\circ_A r_+(\eta) \rangle\\
\nonumber&=& \langle \theta, (a_++\Lambda_+)\Big(-(L_{\Lambda_{+}(\xi)}^{\ast}+R_{\Lambda_{+}(\xi)}^{\ast}) \eta+R_{\Lambda_{+}(\eta)}^{\ast} \xi \Big)\rangle-\langle \theta,(a_++\Lambda_+)(\xi)\circ_A (a_++\Lambda_+)(\eta)  \rangle\\
\nonumber&=& \langle \eta, \Lambda_+(\xi)\ast_A \Lambda_+(\theta)\rangle-\langle \eta, \Lambda_+(\xi)\ast_A a_+(\theta)  \rangle-\langle\xi, \Lambda_+(\theta)\circ_A \Lambda_+(\eta) \rangle+\langle \xi,a_+(\theta)\circ_A \Lambda_+(\eta) \rangle\\
\nonumber&&-\langle \theta, a_+(\xi)\circ_A  a_+(\eta)  \rangle-\langle \theta, a_+(\xi)\circ_A \Lambda_+(\eta)   \rangle-\langle \theta, \Lambda_+(\xi)\circ_A  a_+(\eta)   \rangle-\langle \theta,\Lambda_+(\xi)\circ_A  \Lambda_+(\eta) \rangle.
\end{eqnarray*}
On the other hand, by \eqref{eq:invariant 2}, we have
\begin{eqnarray*}\label{222}
\llbracket r,r\rrbracket(\xi,\theta,\eta)&=&\llbracket \Lambda,\Lambda\rrbracket(\xi,\theta,\eta)+\llbracket a,a\rrbracket(\xi,\theta,\eta)\\
\nonumber&=&\langle \xi,\Lambda_+(\theta)\circ_A \Lambda_+(\eta)  \rangle+\langle \theta,\Lambda_+(\xi)\circ_A \Lambda_+(\eta) \rangle-\langle \eta,\Lambda_+(\xi)\ast_A \Lambda_+(\theta) \rangle\\
\nonumber&&\langle \theta,a_+(\xi)\circ_A a_+(\eta)\rangle,
\end{eqnarray*}
Thus by \eqref{eq:invariant 2},  we have
\begin{eqnarray*}
&&\langle \theta, \llbracket r,r\rrbracket (\xi,\cdot,\eta)+r_+(\xi \cdot_r \eta)-r_+(\xi)\circ_A r_+(\eta)  \rangle\\\
&=&-\underline{\langle \eta, \Lambda_+(\xi)\ast_A a_+(\theta)  \rangle}+\langle \xi,a_+(\theta)\circ_A \Lambda_+(\eta) \rangle-\langle \theta, a_+(\xi)\circ_A \Lambda_+(\eta)   \rangle\underline{-\langle \theta, \Lambda_+(\xi)\circ_A  a_+(\eta)   \rangle}\\
&=& \langle \xi,a_+(\theta)\circ_A \Lambda_+(\eta) \rangle-\langle \theta, a_+(\xi)\circ_A \Lambda_+(\eta)   \rangle\\
&=& \langle \xi, \Lambda_+(\eta) \ast_A a_+(\theta)\rangle-\langle \xi, \Lambda_+(\eta) \circ_A a_+(\theta)\rangle-\langle \theta, a_+(\xi)\circ_A \Lambda_+(\eta)   \rangle\\
&=& -\langle \theta, \Lambda_+(\eta) \circ_A a_+(\xi)\rangle+\langle \theta, \Lambda_+(\eta) \ast_A a_+(\xi)\rangle-\langle \theta, a_+(\xi)\circ_A \Lambda_+(\eta)   \rangle\\
&=& 0,
\end{eqnarray*}
which implies that  \eqref{333} holds. Thus, if $r$ satisfies $\llbracket r,r\rrbracket=0$,
then $r_+$ is a Zinbiel algebra homomorphism. Similarly, we can also prove that  $r_{-}$ is  a Zinbiel algebra homomorphism.

Conversely, if $r_{+}$ is a Zinbiel algebra homomorphism, by \eqref{333}, we have $ \llbracket r,r\rrbracket =0$.
\end{proof}

Next we introduce the notion of relative Rota-Baxter operators on Zinbiel algebras. We need to define actions of Zinbiel algebras, which are similar to actions of Lie algebras.

\begin{defi}
Let $(A,\circ_{A})$ and $(B,\circ_{B})$ be two Zinbiel algebras. An {\bf action} of $(A,\circ_{A})$ on $(B,\circ_{B})$ consists a pair $(\rho,\mu)$, where $(B;\rho,\mu)$ is a representation of a Zinbiel algebra $(A,\circ_{A})$ and two linear maps $\rho,\mu:A \to \End(B)$ satisfy the following equations additionally:
\begin{eqnarray}
\label{action1} u\circ_{B} (\mu(x)v)&=&\mu(x)(u\circ_B v+v\circ_B u);\\
\label{action2} u\circ_{B} (\rho(x)v)&=&(\mu(x)u)\circ_B v+(\rho(x)u)\circ_B v;\\
\label{action3} \rho(x)(u\circ_B v)&=&(\mu(x)u)\circ_B v+(\rho(x)u)\circ_B v,
\end{eqnarray}
for all $x\in A,u,v\in B$.
\end{defi}

\begin{rmk}
Similar to representations of Zinbiel algebras, an action $(\rho,\mu)$ of $(A,\circ_{A})$ on $(B,\circ_{B})$ has an equivalent characterization in terms of the Zinbiel algebra structure on the direct sum $A\oplus B$ of vector spaces. Here the multiplication is given by
$$ (x+u)\circ_{(\rho,\mu)} (y+v)=x\circ_A y+\rho(x)v+\mu(y)u+u\circ_B v, \quad \forall x,y\in A,u,v\in B.$$
\end{rmk}

\begin{defi}
Let $(A,\circ_{A})$ and $(B,\circ_{B})$ be two Zinbiel algebras and $(\rho,\mu)$ be an action of $(A,\circ_{A})$ on $(B,\circ_{B})$. Then a linear map $T:B\to A$ is called a {\bf relative Rota-Baxter operator of weight $\lambda$} with respect to the action $(\rho,\mu)$ of $A$ on $B$, if $T$ satisfies the following equation:
\begin{equation}\label{rRBO}
(Tu)\circ_{A} (Tv)=T(\rho(Tu)v+\mu(Tv)u+\lambda u\circ_B v), \quad \forall u,v\in B.
\end{equation}
\end{defi}

\begin{rmk}
If the multiplication on $B$ is trivial, then we can define a relative Rota-Baxter operator of weight zero, which is also called an $\huaO$-operator. See \cite{BGN,Ku} for more details.
\end{rmk}

By $I=r_+ - r_-$, the Zinbiel algebra multiplication $\cdot_{r}$ on $A^*$ given by \eqref{Ar} reduces to
\begin{equation}\label{rRBO-Ar}
\xi \cdot_{r} \eta=-(L_{r_+ \xi}^*+R_{r_+ \xi}^*)\eta+R_{r_+ \eta}^* \xi-R_{I\eta}^* \xi, \quad \forall \xi,\eta\in A^*.
\end{equation}

Now we define a new multiplication $\cdot_+$ on $A^*$ as follows:
\begin{equation}
\xi \cdot_+ \eta=R_{I\eta}^* \xi, \quad \forall \xi,\eta\in A^*.
\end{equation}

\begin{pro}\label{dot+}
$(A^*,\cdot_+)$ is a Zinbiel algebra.
\end{pro}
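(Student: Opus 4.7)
The plan is to verify the Zinbiel axiom for $\cdot_+$ by pairing with a test vector $x \in A$ and reducing to the invariance identity \eqref{eq:invariant 2}. As a preliminary, I would derive the auxiliary relation $I \circ R_x^* = R_x \circ I$ from Proposition \ref{invariance2}, using the two equivalent forms $I L_x^* = -(L_x+R_x) I$ and $L_x I = -I(L_x^* + R_x^*)$ given there (equivalently, writing $R_x = (L_x + R_x) - L_x$ in the sub-adjacent commutative algebra $A^c$). This delivers the clean formula $I(\xi \cdot_+ \eta) = I\xi \circ_A I\eta$, and in particular
\[
\xi \cdot_+ (\eta \cdot_+ \theta) = R_{I\eta \circ_A I\theta}^* \xi.
\]

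Next, I would pair both sides of the Zinbiel axiom against $x \in A$. On the left, $-\langle \xi, x \circ_A (I\eta \circ_A I\theta) \rangle$ simplifies via the left-commutativity \eqref{left-com} to $\langle L_{I\eta}^* \xi, x \circ_A I\theta \rangle$. On the right, the two summands pair to $-\langle R_{I\eta}^* \xi, x \circ_A I\theta \rangle$ and $-\langle R_{I\xi}^* \eta, x \circ_A I\theta \rangle$. After regrouping and using that $L^*_{I\eta} + R^*_{I\eta}$ is dual to multiplication by $I\eta$ in $A^c$, the Zinbiel axiom reduces to the single scalar equation
\[
\langle \xi, I\eta \ast_A (x \circ_A I\theta) \rangle + \langle \eta, (x \circ_A I\theta) \circ_A I\xi \rangle = 0,
\]
which is exactly \eqref{eq:invariant 2} after swapping $\xi \leftrightarrow \eta$, substituting $x \circ_A I\theta$ for the free variable, and using $I = 2a_+$ together with the commutativity of $\ast_A$.

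The principal obstacle is the following: applying $I$ to both sides of the desired Zinbiel identity in $A^*$ immediately yields the Zinbiel identity in $A$, which holds, so the axiom is automatic modulo $\ker I$. The kernel is controlled precisely by the skew-symmetry of $I$, equivalently the $(L,L+R)$-invariance of $a$, which is the content of \eqref{eq:invariant 2}. Thus the genuine input is not the computation above (which is routine once one knows to pair against $x$), but rather the observation that the invariance of $a$ translates into exactly the right scalar identity to close the argument.
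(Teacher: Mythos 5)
Your proof is correct and follows essentially the same route as the paper's: both pair the Zinbiel associator of $\cdot_+$ against a test vector $x\in A$, use Proposition \ref{invariance2} (via $I\circ R^*_{I\theta}=R_{I\theta}\circ I$ and the invariance identity \eqref{eq:invariant 2}) together with the left-commutativity \eqref{left-com} to reduce everything to zero. The only cosmetic difference is that you isolate the intermediate formula $I(\xi\cdot_+\eta)=I\xi\circ_A I\eta$ as an explicit lemma, which the paper applies inline.
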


\begin{proof}
By \eqref{left-com} and Proposition \ref{invariance2}, for all $x\in A,\xi,\eta,\theta\in A^*,$  we have
\begin{eqnarray*}
&&\langle \xi \cdot_+(\eta \cdot_+ \theta)-(\xi \cdot_+ \eta)\cdot_+ \theta-(\eta \cdot_+ \xi)\cdot_+ \theta, x\rangle\\
&=& \langle R^*(I(R^*_{I\theta} \eta)) \xi-R^*_{I\theta}R^*_{I\eta} \xi-R^*_{I\theta}R^*_{I\xi} \eta,x\rangle\\
&=& \langle \xi, -x\circ_{A} (I\eta \circ_{A} I\theta )-(x\circ_{A} I \theta)\circ_A (I\eta)+(x\circ_{A} I \theta)\circ_A (I\eta)+(I\eta)\circ_A (x\circ_{A} I\theta)\rangle\\
&=& \langle \xi, -x\circ_{A} (I\eta \circ_{A} I\theta )+(I\eta)\circ_A (x\circ_{A} I\theta)\rangle\\
&=& 0,
\end{eqnarray*}
which implies that $\xi \cdot_+(\eta \cdot_+ \theta)=(\xi \cdot_+ \eta)\cdot_+ \theta+(\eta \cdot_+ \xi)\cdot_+ \theta$. Thus, $(A^*,\cdot_+)$ is a Zinbiel algebra.
\end{proof}

\begin{lem}
$(-L^*-R^*,R^*)$ is an action of the Zinbiel algebra $(A,\circ_A)$ on the Zinbiel algebra $(A^*,\cdot_+)$ given in Proposition \ref{dot+}.
\end{lem}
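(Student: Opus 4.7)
The plan is to verify the three action compatibility conditions \eqref{action1}--\eqref{action3} directly for $\rho=-L^\ast-R^\ast$, $\mu=R^\ast$ acting on $(A^\ast,\cdot_+)$; the underlying representation axioms have already been recorded in Proposition \ref{dual-rep}, so nothing further is required there.

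The first step is to extract the intertwining identities produced by the $(L,L+R)$-invariance of $a$. Proposition \ref{invariance2} gives $L_x\circ I=-I\circ(L_x^\ast+R_x^\ast)$ and, equivalently, $I\circ L_x^\ast=-(L_x+R_x)\circ I$. Combining these yields the companion relation $I\circ R_x^\ast=R_x\circ I$, i.e.\ $I(R_x^\ast\eta)=I\eta\circ_A x$ for all $x\in A,\eta\in A^\ast$. These three identities are the engines that move the operator $I$ past the representation operators $L_x^\ast$ and $R_x^\ast$, which is precisely what is needed to reduce each condition on $\cdot_+$ to an identity inside the Zinbiel algebra $(A,\circ_A)$.

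The second step is to test each of \eqref{action1}--\eqref{action3} against an arbitrary $y\in A$. Unwinding $\xi\cdot_+\eta=R^\ast_{I\eta}\xi$, each side becomes a pairing involving $\xi,\eta,x,y$ and products of the form $(\ldots)\circ_A(\ldots)$ with $I\xi$ or $I\eta$ in one slot. For \eqref{action1}, apply $I\circ R_x^\ast=R_x\circ I$ to rewrite $I(R_x^\ast\eta)=I\eta\circ_A x$, whereupon both sides reduce to a single use of the Zinbiel identity \eqref{Zin-identity} on the triple $(y,I\eta,x)$ together with $R^\ast_x$-adjointness. For \eqref{action2} and \eqref{action3}, the analogous plan works: use $L_x\circ I=-I\circ(L_x^\ast+R_x^\ast)$ to shift $I$ across $-L_x^\ast-R_x^\ast$, obtaining $(-L_x-R_x)(I\eta)$-type terms; then the Zinbiel identity \eqref{Zin-identity}, together with its left-commutativity consequence \eqref{left-com}, produces exactly the required cancellations. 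In the case of \eqref{action3}, one additionally uses that $-L^\ast-R^\ast$ is the representation map in order to combine the two right-hand terms.

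The main obstacle is purely combinatorial bookkeeping: each side of the three identities carries several terms differing only in whether $I$ lands to the left or the right of the $A$-action, and one has to place the Zinbiel identity into precisely the right slot so that signs cancel. This is the same pattern that appears in the proof of Proposition \ref{dot+}, so the technique is in hand; the new ingredient is just the systematic use of the intertwining relations from Proposition \ref{invariance2} to move $I$ across $L_x^\ast$ and $R_x^\ast$ before invoking \eqref{Zin-identity}.
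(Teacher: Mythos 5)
Your proposal is correct and follows essentially the same route as the paper: reduce to checking \eqref{action1}--\eqref{action3} (the representation part being the coregular representation from Proposition \ref{dual-rep}), pair each identity against $y\in A$, use the intertwining relations of Proposition \ref{invariance2} (including the derived companion relation $I\circ R_x^{\ast}=R_x\circ I$) to move $I$ across $L_x^{\ast}$ and $R_x^{\ast}$, and finish with the left-commutativity \eqref{left-com}. The only cosmetic difference is that the paper writes out \eqref{action1} in full and declares the other two ``similar,'' and the final cancellation there is an instance of \eqref{left-com} rather than of \eqref{Zin-identity} directly, which you acknowledge elsewhere.
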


\begin{proof}
We need only to prove that  $(-L^*-R^*,R^*)$ satisfies the equations \eqref{action1}-\eqref{action3}. By \eqref{left-com} and Proposition \ref{invariance2}, for all $x,y \in A,\xi,\eta\in A^*,$  we have
\begin{eqnarray*}
&&\langle \xi \cdot_+ (R^*_x \eta)-R^*_x (\xi\cdot_+ \eta)-R^*_x (\eta\cdot_+ \xi),y \rangle\\
&=& \langle R^*(I(R^*_x \eta))\xi-R^*_x R^*_{I\eta} \xi-R^*_x R^*_{I\xi} \eta,y \rangle\\
&=& \langle \xi, -y\circ_{A} ((I\eta) \circ_A x)-(y \circ_A x)\circ (I\eta)+(y \circ_A x)\circ (I\eta)+(I\eta)\circ_A (y\circ_A x)\rangle\\
&=& \langle \xi, -y\circ_{A} ((I\eta) \circ_A x)+(I\eta)\circ_A (y\circ_A x)\rangle\\
&=& 0,
\end{eqnarray*}
which implies that \eqref{action1} is satisfied. Similarly, equations \eqref{action2} and  \eqref{action3} hold by direct calculation. Therefore,
$(-L^*-R^*,R^*)$ is an action of the Zinbiel algebra $(A,\circ_A)$ on  $(A^*,\cdot_+)$.
\end{proof}

Then we show that quasi-triangular Zinbiel bialgebras can give rise to relative Rota-Baxter operators of weight $-1$ with respect to the coregular representation $(-L^*-R^*,R^*)$.

\begin{thm}\label{quasi-rRBO}
Let $(A,A_r^*)$ be a quasi-triangular Zinbiel bialgebra induced by $r\in A\otimes A$. Then $r_+:(A^*,\cdot_+) \to (A,\circ_A)$ is a relative Rota-Baxter operators of weight $-1$ with
respect to the action $(-L^*-R^*,R^*)$.
\end{thm}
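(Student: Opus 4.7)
My plan is to derive the relative Rota--Baxter identity directly from the explicit formula for the product $\cdot_r$ on $A^{\ast}$, combined with the fact (already established in Theorem \ref{heart}) that $r_+:(A^{\ast},\cdot_r)\to(A,\circ_A)$ is a Zinbiel algebra homomorphism.

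First I would rewrite equation \eqref{rRBO-Ar} so that the action $(-L^{\ast}-R^{\ast},R^{\ast})$ and the auxiliary product $\cdot_+$ both appear on the right--hand side. Concretely, setting $\rho=-L^{\ast}-R^{\ast}$ and $\mu=R^{\ast}$, and recognizing that the term $R^{\ast}_{I\eta}\xi$ is exactly $\xi\cdot_+\eta$ by the definition of $\cdot_+$, the formula \eqref{rRBO-Ar} becomes
\begin{equation*}
\xi\cdot_r\eta \;=\; \rho(r_+\xi)\eta + \mu(r_+\eta)\xi - \xi\cdot_+\eta,\qquad\forall\,\xi,\eta\in A^{\ast}.
\end{equation*}

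Next I would apply $r_+$ to both sides. By hypothesis $(A,A^{\ast}_r)$ is quasi-triangular, so in particular $\llbracket r,r\rrbracket=0$ and the skew-symmetric part $a$ of $r$ is $(L,L+R)$-invariant; Theorem \ref{heart} then gives $r_+(\xi\cdot_r\eta)=r_+(\xi)\circ_A r_+(\eta)$. Substituting yields
\begin{equation*}
(r_+\xi)\circ_A(r_+\eta) \;=\; r_+\bigl(\rho(r_+\xi)\eta + \mu(r_+\eta)\xi + (-1)\,\xi\cdot_+\eta\bigr),
\end{equation*}
which is precisely equation \eqref{rRBO} with $T=r_+$, $\lambda=-1$, and the action $(\rho,\mu)=(-L^{\ast}-R^{\ast},R^{\ast})$ of $(A,\circ_A)$ on $(A^{\ast},\cdot_+)$ established in the preceding lemma.

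There is essentially no hard step: once the preparatory lemmas (Proposition \ref{dot+} and the fact that $(-L^{\ast}-R^{\ast},R^{\ast})$ is an action on $(A^{\ast},\cdot_+)$) are in place, the theorem is just an algebraic rearrangement of \eqref{rRBO-Ar} combined with Theorem \ref{heart}. The only point that deserves care is verifying that the sign conventions match: one must check that the $-R^{\ast}_{I\eta}\xi$ contribution in \eqref{rRBO-Ar} really is the coefficient $\lambda=-1$ times the product $\cdot_+$, rather than $+1$ times it, which comes down to the definition $\xi\cdot_+\eta=R^{\ast}_{I\eta}\xi$. Once this is observed, the conclusion follows in one line.
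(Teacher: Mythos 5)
Your proposal is correct and follows exactly the paper's own argument: apply Theorem \ref{heart} to get $r_+(\xi\cdot_r\eta)=(r_+\xi)\circ_A(r_+\eta)$, then substitute the formula \eqref{rRBO-Ar} and identify $R^{\ast}_{I\eta}\xi$ with $\xi\cdot_+\eta$ to read off the relative Rota--Baxter identity of weight $-1$. The sign check you flag is the right thing to verify, and it works out as you say.
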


\begin{proof}
By Theorem \ref{heart} and \eqref{rRBO-Ar}, for all $\xi,\eta\in A^*,$ we have
\begin{eqnarray*}
(r_+\xi)\circ_A (r_+\eta)&=& r_+(\xi\cdot_{r} \eta)\\
&=& r_+(-(L_{r_+ \xi}^*+R_{r_+ \xi}^*)\eta+R_{r_+ \eta}^* \xi-R_{I\eta}^* \xi)\\
&=& r_+(-(L_{r_+ \xi}^*+R_{r_+ \xi}^*)\eta+R_{r_+ \eta}^* \xi-\xi\cdot_+ \eta),
\end{eqnarray*}
which implies that $r_+:(A^*,\cdot_+) \to (A,\circ_A)$ is a relative Rota-Baxter operators of weight $-1$ with respect to the action $(-L^*-R^*,R^*)$.
\end{proof}

\begin{rmk}
In fact, if we define a new multiplication on $A^*$ as follows:
$$ \xi \cdot_- \eta=L_{I\xi}^*\eta + R_{I\xi}^*\eta, \quad \forall \xi,\eta\in A^*,$$
then $(A^*,\cdot_-)$ is also a Zinbiel algebra. We can also prove that $r_-:(A^*,\cdot_-) \to (A,\circ_A)$ is a relative Rota-Baxter operators of weight $-1$ with respect to the action $(-L^*-R^*,R^*)$.
\end{rmk}

Taking a symmetric $r\in A\otimes A$ into Theorem \ref{quasi-rRBO}, we can obtain that a triangular Zinbiel bialgebra can give rise to a relative Rota-Baxter operators of weight $0$ (also called $\huaO$-operator), with respect to the coregular representation $(-L^*-R^*,R^*)$.

\begin{cor}
Let $(A,A_r^*)$ be a triangular Zinbiel bialgebra induced by a symmetric $r\in A\otimes A$. Then $r_+,r_-:A^* \to A$ are relative Rota-Baxter operators of weight $0$ with
respect to the coregular representation $(-L^*-R^*,R^*)$.
\end{cor}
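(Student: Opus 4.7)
The plan is to deduce this as a direct specialization of Theorem \ref{quasi-rRBO}, exploiting the fact that the symmetry of $r$ collapses all the data that distinguishes weight $-1$ from weight $0$.

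First I would observe that if $r\in A\otimes A$ is symmetric, then its skew-symmetric part $a$ is zero, so the $(L,L+R)$-invariance condition is trivially satisfied, and $(A,A_r^\ast)$ is automatically a triangular Zinbiel bialgebra as soon as $\llbracket r,r\rrbracket=0$. Next, from the definition $I=r_+-r_-$ and the symmetry of $r$, one reads off $I=0$, hence in particular $r_+=r_-$. It therefore suffices to establish the claim for $r_+$; the statement for $r_-$ follows from this identification.

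Second, I would feed $I=0$ into the auxiliary multiplication $\cdot_+$ on $A^\ast$ introduced just before Proposition \ref{dot+}: since $\xi\cdot_+\eta=R^\ast_{I\eta}\xi$, the multiplication $\cdot_+$ is identically zero. Consequently the Zinbiel algebra $(A^\ast,\cdot_+)$ is the trivial (zero-product) algebra, and the action $(-L^\ast-R^\ast,R^\ast)$ of $(A,\circ_A)$ on $(A^\ast,\cdot_+)$ reduces to the coregular representation on $A^\ast$ with no multiplicative structure on the target.

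Finally, I would invoke Theorem \ref{quasi-rRBO}, which asserts that $r_+\colon (A^\ast,\cdot_+)\to (A,\circ_A)$ is a relative Rota-Baxter operator of weight $-1$, i.e.\
\begin{equation*}
(r_+\xi)\circ_A(r_+\eta)=r_+\bigl(-(L^\ast_{r_+\xi}+R^\ast_{r_+\xi})\eta+R^\ast_{r_+\eta}\xi-\xi\cdot_+\eta\bigr).
\end{equation*}
Because $\xi\cdot_+\eta=0$, the weight term vanishes identically, so the same identity reads
\begin{equation*}
(r_+\xi)\circ_A(r_+\eta)=r_+\bigl(-(L^\ast_{r_+\xi}+R^\ast_{r_+\xi})\eta+R^\ast_{r_+\eta}\xi\bigr),
\end{equation*}
which is exactly the defining equation \eqref{rRBO} of a relative Rota-Baxter operator of weight $0$ with respect to the coregular representation $(-L^\ast-R^\ast,R^\ast)$. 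Since $r_-=r_+$, the same statement holds verbatim for $r_-$, and there is no real obstacle — the only point one must be careful about is checking that setting $\lambda=0$ is legitimate, which it is precisely because $\cdot_+$ (equivalently, the weight contribution in Theorem \ref{quasi-rRBO}) is the zero multiplication when $r$ is symmetric.
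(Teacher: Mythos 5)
Your proposal is correct and is essentially the paper's own argument: the paper likewise obtains this corollary by specializing Theorem \ref{quasi-rRBO} to symmetric $r$, using that $I=r_+-r_-=0$ forces $\cdot_+$ to vanish so the weight $-1$ identity degenerates to the weight $0$ one. Your added observation that $r_+=r_-$ cleanly disposes of the second operator.
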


\subsection{Factorizable Zinbiel bialgebras}

\begin{defi}
A quasi-triangular Zinbiel bialgebra $(A,A_{r}^{\ast})$ is called {\bf factorizable} if the skew-symmetric part $a$ of $r$ is non-degenerate, which means that the linear map $I:A^{\ast} \longrightarrow A$ defined in \eqref{I} is a linear isomorphism of vector spaces.
\end{defi}

Consider the map
$$
A^*\stackrel{r_+\oplus r_-}{\longrightarrow}A\oplus A\stackrel{(x,y)\longmapsto x-y}{\longrightarrow}A.
$$
The following result justifies the terminology of a factorizable Zinbiel bialgebra.
\begin{pro}
  Let  $(A,A_{r}^{\ast})$ be a factorizable Zinbiel bialgebra. Then $\Img(r_+\oplus r_-)$ is a Zinbiel subalgebra of the direct sum Zinbiel algebra $A \oplus A$, which is isomorphic to the Zinbiel algebra $(A^*,\cdot_r).$ Moreover, any $x\in A$ has a unique decomposition
  \begin{equation}
    x=x_+-x_-,
  \end{equation}
  where $(x_+,x_-)\in \Img(r_+\oplus r_-)$.
\end{pro}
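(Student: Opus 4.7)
The plan is to read off everything from Theorem \ref{heart} together with the factorizability hypothesis. Theorem \ref{heart} tells us that, under the $(L,L+R)$-invariance of the skew-symmetric part of $r$ and $\llbracket r,r\rrbracket=0$ (both of which hold by the definition of a quasi-triangular Zinbiel bialgebra), the maps $r_+,r_-:(A^*,\cdot_r)\to (A,\circ_A)$ are Zinbiel algebra homomorphisms. Hence the diagonal-type map $r_+\oplus r_-:(A^*,\cdot_r)\to A\oplus A$, $\xi\mapsto (r_+(\xi),r_-(\xi))$, is a homomorphism into the direct sum Zinbiel algebra, and so its image $\Img(r_+\oplus r_-)$ is automatically a Zinbiel subalgebra of $A\oplus A$.

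Next I would use the factorizability hypothesis, namely that $I=r_+-r_-:A^*\to A$ is a linear isomorphism. This immediately yields that $r_+\oplus r_-$ is injective: if $(r_+(\xi),r_-(\xi))=(0,0)$ then $I(\xi)=r_+(\xi)-r_-(\xi)=0$, forcing $\xi=0$. Combined with the previous step, this shows that $r_+\oplus r_-$ is a Zinbiel algebra isomorphism from $(A^*,\cdot_r)$ onto $\Img(r_+\oplus r_-)$, which is the asserted isomorphism of Zinbiel algebras.

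For the factorization statement, given $x\in A$, I set $\xi:=I^{-1}(x)\in A^*$ and define $x_+:=r_+(\xi)$, $x_-:=r_-(\xi)$. Then $(x_+,x_-)\in \Img(r_+\oplus r_-)$ by construction, and
\[
x_+ - x_- = r_+(\xi)-r_-(\xi)=I(\xi)=x.
\]
For uniqueness, suppose $x=y_+-y_-$ with $(y_+,y_-)=(r_+(\eta),r_-(\eta))$ for some $\eta\in A^*$. Then $I(\eta)=r_+(\eta)-r_-(\eta)=x=I(\xi)$, so the invertibility of $I$ gives $\eta=\xi$, and therefore $(y_+,y_-)=(x_+,x_-)$.

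There is really no significant obstacle here: the work has been done in Theorem \ref{heart} and in the definition of factorizability; the proposition is assembling these facts. The only mild care needed is to notice that one must use the full isomorphism $I$ to get both the injectivity of $r_+\oplus r_-$ and the existence/uniqueness of the decomposition, rather than merely its surjectivity or injectivity separately.
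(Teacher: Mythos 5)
Your proposal is correct and follows essentially the same route as the paper: both deduce that $\Img(r_+\oplus r_-)$ is a subalgebra from Theorem \ref{heart}, obtain the isomorphism with $(A^*,\cdot_r)$ from the injectivity forced by the invertibility of $I=r_+-r_-$, and produce the decomposition via $x_\pm=r_\pm(I^{-1}x)$ with uniqueness again from the invertibility of $I$. Your write-up is, if anything, slightly more explicit than the paper's about why $r_+\oplus r_-$ is injective.
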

\begin{proof}
By Theorem \ref{heart}, both $r_+$ and $r_-$ are Zinbiel algebra homomorphisms. Therefore, $\Img(r_+\oplus r_-)$ is a Zinbiel subalgebra of the direct sum Zinbiel algebra $A \oplus A$. Since $I=r_+  -r_-:A^*\to A$ is nondegenerate, it follows that the Zinbiel algebra $\Img(r_+\oplus r_-)$ is   isomorphic to the Zinbiel algebra $(A^*,\cdot_r).$

  Since $I:A^*\to A$ is nondegenerate, we have
  $$
 r_+ I^{-1}(x)-r_-I^{-1}(x)= (r_+  -r_-)I^{-1}(x)=x,
  $$
  which implies that $x=x_+-x_-,$ where $x_+=r_+ I^{-1}(x)$ and $x_-=r_-I^{-1}(x)$. The uniqueness also follows from the fact that $I:A^*\to A$ is nondegenerate.
\end{proof}

\begin{rmk}
Actually, $-r_-I^{-1}:A \to A$ is an idempotent Rota-Baxter operator of weight $-1$, which can be seen from the proof of the following Theorem \ref{Factorizable Zinbiel algebra}. The factorization given above coincides with the factorization given by the idempotent Rota-Baxter
operator $-r_-I^{-1}$ of weight $-1$.  See \cite{Guo} for more details for the latter.
\end{rmk}

\subsection{Zinbiel double}

Let $(A,A^*)$ be a Zinbiel bialgebra. Define a multiplication $\cdot_{\frkd}$ on $\frkd=A\oplus A^*$ by
\begin{equation}
(x,\xi)\cdot_{\frkd}(y,\eta)=\Big(  x\circ_A y-(\huaL^*_{\xi}+\huaR^*_{\xi})y+\huaR^*_{\eta}(x),\xi\cdot_{A^*} \eta-(L_x^*+R_x^*)\eta+R_y^*(\xi) \Big), \quad \forall x,y\in A,\xi,\eta\in A^*.
\end{equation}
Actually, $(\frkd,\cdot_{\frkd})$ is a special matched pair $(A,A^*;-L^*-R^*,R^*,-\huaL^*-\huaR^*,\huaR^*)$, or simply denoted by $\frkd=A \bowtie_{-L^*-R^*,R^*}^{-\huaL^*-\huaR^*,\huaR^*} A^*$. By Proposition \ref{mp-equ}, $(\frkd,\cdot_{\frkd})$ is a Zinbiel algebra, which is called the {\bf Zinbiel double} of the  Zinbiel bialgebra $(A,A^*).$

\begin{thm}\label{Z-double-fac}
Let $(A,A^*)$ be a Zinbiel bialgebra. Suppose that $\{ e_1,e_2,\dots,e_n\}$ is a basis of $A$ and $\{ e_1^*,e_2^*,\dots,e_n^*\}$ is a basis of $A^*$. Then $r=\sum_{i} e_i\otimes e_i^* \in A\otimes A^*  \subset \frkd \otimes \frkd$ induces a Zinbiel algebra structure on $\frkd^*=A^*\oplus A$ such that $(\frkd,\frkd_r^*)$ is a quasi-triangular Zinbiel bialgebra, where the Zinbiel algebra structure $(\frkd_r^*,\cdot_{\frkd_r^*})$ is given by
\begin{equation}\label{frkd-r}
(\xi,x)\cdot_{\frkd_r^*} (\eta,y)=(\xi \cdot_{A^*} \eta, x \circ_A y),\quad \forall x,y\in A,\xi,\eta\in A^*.
\end{equation}
Moreover, $(\frkd,\frkd_r^*)$ is also a factorizable Zinbiel bialgebra.
\end{thm}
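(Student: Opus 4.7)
The plan is to verify each ingredient of the quasi-triangular and factorizable conditions for the specific element $r=\sum_i e_i\otimes e_i^*\in\frkd\otimes\frkd$, and to identify the induced dual Zinbiel structure with \eqref{frkd-r}. First, using the natural identification $\frkd^*\cong A^*\oplus A$, I would compute $r_\pm$ explicitly on the basis: for $\phi=\xi+y\in\frkd^*$ with $\xi\in A^*$ and $y\in A$, one obtains $r_+(\phi)=\sum_i\xi(e_i)\,e_i^*=\xi$ and $r_-(\phi)=\sum_i e_i^*(y)\,e_i=y$, viewed in $A^*\subset\frkd$ and $A\subset\frkd$ respectively. Hence $I=r_+-r_-$ sends $\xi+y\mapsto\xi-y$ and is a linear isomorphism $\frkd^*\to\frkd$. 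This takes care of the non-degeneracy needed for factorizability, once the quasi-triangular structure is established.

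Next I would identify $I$ with $\omega^{\sharp}$, where $\omega$ is the natural invariant form on $\frkd$ given by \eqref{natural-bilinear-form}. Solving $\omega(\omega^{\sharp}(\xi+y),\,\cdot\,)=(\xi+y)(\cdot)$ gives $\omega^{\sharp}(\xi+y)=-y+\xi$, matching $I$. Since the equivalence results of Section \ref{Zinbiel-bialgebras} guarantee that $(\frkd,A,A^*)$ is a Manin triple, $\omega$ satisfies the invariance condition \eqref{quadratic-condition1}; translating that identity through $\omega^{\sharp}=I$ yields $L_w\circ I=-I\circ(L_w^*+R_w^*)$ for every $w\in\frkd$, which by Proposition \ref{invariance2} is exactly the $(L,L+R)$-invariance of the skew-symmetric part $a$ of $r$. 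For the induced multiplication on $\frkd^*$, I would substitute $r_+(\xi+y)=\xi$ and $r_-(\eta+z)=z$ into \eqref{Ar} and pair both sides against an arbitrary $u+\alpha\in\frkd$. Expanding $L_\xi,R_\xi,R_z$ on $\frkd$ via the matched-pair formula \eqref{mp-zinbiel} produces several cross-terms involving $L^*,R^*,\huaL^*,\huaR^*$; these should cancel in pairs, isolating $(\xi\cdot_{A^*}\eta)(u)+\alpha(y\circ_A z)$, which is \eqref{frkd-r}.

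Rather than attacking $\llbracket r,r\rrbracket$ directly in $\frkd^{\otimes 3}$, I would invoke Theorem \ref{heart}. By the previous step, the multiplication \eqref{frkd-r} on $\frkd^*$ is the direct product of $(A^*,\cdot_{A^*})$ and $(A,\circ_A)$, so it is automatically a Zinbiel algebra. Moreover, $r_+$ becomes the projection $A^*\oplus A\to A^*$ composed with the inclusion $A^*\hookrightarrow\frkd$, which is manifestly a Zinbiel algebra homomorphism; similarly for $r_-$. Combined with the invariance already established, Theorem \ref{heart} forces $\llbracket r,r\rrbracket=0$. Putting all steps together, Theorem \ref{equ-coboundary-Zin-bialg} yields a coboundary Zinbiel bialgebra $(\frkd,\frkd_r^*)$ of quasi-triangular type, and non-degeneracy of $I$ upgrades it to factorizable. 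The main obstacle will be the matched-pair bookkeeping verifying \eqref{frkd-r}: multiple cross-terms with four different dual-operator conventions must be tracked carefully, with opposite-sign pairs cancelling to leave the clean direct-product formula.
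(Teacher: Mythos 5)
Your proposal is correct, but it reaches the two key facts by a genuinely different route than the paper. For the $(\tilde L,\tilde L+\tilde R)$-invariance of the skew-symmetric part $a$, the paper computes $a^{\sharp}(\xi,x)=\frac{1}{2}(-x,\xi)$ and verifies the criterion of Lemma \ref{invariance1} by expanding $(x,\xi)\ast_{\frkd}a^{\sharp}(\eta,y)$ and $\tilde L^{\ast}_{(x,\xi)}(\eta,y)$ directly; you instead observe that $I=r_+-r_-$ coincides with $\omega^{\sharp}$ for the canonical form \eqref{natural-bilinear-form} and pull the invariance of $\omega$ (available because $(\frkd,A,A^*)$ is a Manin triple) through Proposition \ref{invariance2}. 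For $\llbracket r,r\rrbracket=0$, the paper expands the bracket in $\frkd^{\otimes 3}$ on the basis $e_i\otimes e_i^*$ and exhibits three families of cancelling terms, whereas you first establish \eqref{frkd-r} from \eqref{Ar} and then invoke Theorem \ref{heart}: the product \eqref{frkd-r} is the direct product of $(A^*,\cdot_{A^*})$ and $(A,\circ_A)$, and $r_+,r_-$ are the coordinate projections, hence Zinbiel homomorphisms, so identity \eqref{333} forces the bracket to vanish. Your route is more conceptual and recycles the structural results of Sections \ref{Zinbiel-bialgebras}--\ref{sec:factorizable}, at the cost of having to carry out the matched-pair verification of \eqref{frkd-r} (which the paper asserts without printed proof); I checked that the cross-terms do cancel as you predict, the key point being $\langle\xi\ast_{A^*}\theta,y\rangle-\langle\theta\ast_{A^*}\xi,y\rangle=0$ by commutativity of $\ast_{A^*}$. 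The paper's computation is longer but self-contained and does not presuppose the explicit form of $\cdot_{\frkd_r^*}$; both arguments deliver the same conclusion, and the factorizability step (non-degeneracy of $I(\xi,x)=(-x,\xi)$) is identical in the two treatments.
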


\begin{proof}
We first prove that the skew-symmetric part $a=\frac{1}{2} \sum_{i} (e_i\otimes e_i^{\ast}-e_i^{\ast}\otimes e_i) $ of $r$ is $(\tilde{L},\tilde{L}+\tilde{R})$-invariant, where $\tilde{L},\tilde{R}$ are the left and right multiplication operators of the Zinbiel algebra $(\frkd,\cdot_\frkd)$ respectively.  For all  $(\xi, x)\in \frkd^{\ast}$, we have $a^{\sharp}(\xi, x)=\frac{1}{2} (-x,\xi)\in \frkd.$ Furthermore, by a direct calculation,  we have
\begin{eqnarray*}
(x,\xi)\ast_{\frkd} a^{\sharp}(\eta,y)
 &=& \frac{1}{2} \Big( -x\ast_A y-\huaL_{\eta}^*x+\huaL_{\xi}^*y,\xi\ast_{A^*} \eta-L_x^*\eta+L_y^* \xi \Big),\\
\tilde{L}^{\ast}_{(x,\xi)} (\eta,y)&=&\Big(-\xi \ast_{A^*} \eta+L_x^*\eta-L_y^* \xi,-x\ast_A y-\huaL_{\eta}^*x+\huaL_{\xi}^*y \Big).
\end{eqnarray*}
Thus we have
$$a^{\sharp} \Big(\tilde{L}^{\ast}_{(x,\xi)}(\eta,y)\Big)+(x,\xi)\ast_{\frkd} a^{\sharp}(\eta,y)=0.$$
By Lemma \ref{invariance1},  the skew-symmetric part $a$ of $r$ is  $(\tilde{L},\tilde{L}+\tilde{R})$-invariant. We also have
\begin{eqnarray*}
\llbracket r,r\rrbracket &=& \sum_{i,j}(-e_i\circ_A e_j\otimes e_j^* \otimes e_i^*-e_j^*\otimes e_i\circ_A e_j\otimes e_i^*+e_i \ast_{\frkd} e_j^*\otimes e_j \otimes e_i^*+e_i \otimes e_i^* \ast_{\frkd} e_j \otimes e_j^*\\
&&-e_i \otimes e_j\otimes e_i^* \ast_{\frkd} e_j^*)\\
&=& \sum_{i,j}(-e_i\circ_A e_j\otimes e_j^* \otimes e_i^*-e_j^*\otimes e_i\circ_A e_j\otimes e_i^*-\huaL^*_{e_j^*} e_i\otimes e_j\otimes e_i^*-L^*_{e_i} e_j^*\otimes e_j\otimes e_i^*\\
&&-e_i\otimes L^*_{e_j} e_i^*\otimes e_j^*-e_i\otimes \huaL^*_{e_i^*} e_j\otimes e_j^*-e_i \otimes e_j\otimes e_i^* \ast_{\frkd} e_j^*).
\end{eqnarray*}
Note that
\begin{eqnarray*}
&&\sum_{i,j}(e_i\circ_A e_j\otimes e_j^* \otimes e_i^*+e_i\otimes L^*_{e_j} e_i^*\otimes e_j^*)=0;\\
&&\sum_{i,j}(e_j^*\otimes e_i\circ_A e_j\otimes e_i^*+L^*_{e_i} e_j^*\otimes e_j\otimes e_i^*)=0;\\
&&\sum_{i,j}(\huaL^*_{e_j^*} e_i\otimes e_j\otimes e_i^*-e_i\otimes \huaL^*_{e_i^*} e_j\otimes e_j^*-e_i \otimes e_j\otimes e_i^* \ast_{\frkd} e_j^*)=0.\\
\end{eqnarray*}
Thus $\llbracket r,r\rrbracket =0.$ Hence $(\frkd,\frkd^{\ast}_r)$ is a quasi-triangular Zinbiel bialgebra.

Moreover, note that $r_{+},r_{-}:\frkd^{\ast} \longrightarrow
\frkd$ are given   respectively   by
$$ r_{+}(\xi,x)=(0,\xi),\quad r_{-}(\xi,x)=(x,0),\quad \forall x\in A, \xi\in A^{\ast}.$$
This implies that $I(\xi,x)=(-x,\xi)$, which means that the linear map $I:\frkd^{\ast} \longrightarrow \frkd$ is a linear isomorphism of vector spaces. Therefore, $(\frkd,\frkd^{\ast}_r)$ is a factorizable Zinbiel bialgebra.
\end{proof}

\emptycomment{
Actually, for all $(z,\theta)\in \frkd$, we have
\begin{eqnarray*}
&&\langle (\xi,x)\cdot_{\frkd_r^*} (\eta,y),(z,\theta)  \rangle\\
&=& \langle \alpha(z,\theta),(\xi,x)\otimes (\eta,y) \rangle\\
&=& \langle (e_i,0)\otimes (z,\theta)\ast_{\frkd} (0,e_i^*)-(z,\theta)\cdot_{\frkd}(e_i,0)\otimes(0,e_i^*),(\xi,x)\otimes (\eta,y) \rangle\\
&=& \langle \xi,e_i \rangle \Big( \langle e_i^* \cdot_{A^*}\eta,z \rangle+\langle \theta\ast_{A^*} e_i^*,y \rangle+\langle e_i^*,z\circ_{A} y \rangle \Big)\\
&&-\Big( \langle\xi,z\circ_{A} e_i\rangle+\langle \theta\ast_{A^*} \xi,e_i \rangle-\langle\theta,x\circ_{A} e_i\rangle \Big)\langle e_i^*, y\rangle\\
&=& \langle \xi \cdot_{A^*} \eta,z \rangle+\langle \theta,x\circ_{A} y\rangle\\
&=& \langle (\xi \cdot_{A^*} \eta, x \circ_A y) ,(z,\theta)\rangle,
\end{eqnarray*}
which implies that $(\xi,x)\cdot_{\frkd_r^*} (\eta,y)=(\xi \cdot_{A^*} \eta, x \circ_A y)$ is the Zinbiel algebra structure on $\frkd_r^*$.
}

\section{Rota-Baxter characterization of factorizable Zinbiel bialgebras}\label{sec:quadratic-RB}

In this section, first we recall the one-to-one correspondence between quadratic Zinbiel algebras and commutative algebras with Connes cocycles. Then we add Rota-Baxter operators on these algebraic structures, and show that there is still a one-to-one correspondence between quadratic Rota-Baxter Zinbiel algebras and Rota-Baxter commutative associative algebras with Connes cocycles. Finally we show that there is a one-to-one correspondence between factorizable Zinbiel bialgebras and  quadratic Rota-Baxter Zinbiel algebras, as well as Rota-Baxter commutative associative algebras with Connes cocycles.

\subsection{Quadractic Rota-Baxter Zinbiel algebras}

The notion of quadractic Zinbiel algebras is given by definition \ref{quadratic-Zinbiel-alg}.

Recall that a skew-symmetric bilinear form $\omega\in\wedge^2 A^*$ is called a {\bf Connes cocycle} on a commutative associative algebra $(A,\ast_A)$ if
$$
\omega(x\ast_A y,z)+\omega(y\ast_A z,x)+\omega(z\ast_A x,y)=0,\quad  \forall x,y,z \in A.
$$

There is a one-to-one correspondence between quadratic Zinbiel algebras and commutative associative  algebras with nondegenerate Connes cocycles.

\begin{thm}\label{lem:Zin-Connes}{\rm(\cite{LB})}
Let $(A, \circ_A,\omega)$ be a quadratic Zinbiel algebra. Then $(A,\ast_A,\omega)$ is a commutative associative algebras with a nondegenerate Connes cocycle $\omega$, where the commutative associative multiplication $\ast_A$ is given by \eqref{Zin-com}.

Conversely, let $(A,\ast_A,\omega)$ be a commutative associative algebras with a nondegenerate Connes cocycle $\omega$. Then there exists a compatible Zinbiel algebra structure $\circ_A$ on $A$ given by
    \begin{equation}\label{eq:Zin-Connes}
        \omega(x\circ_A y,z)=\omega(y,x \ast_A z),\quad\forall x,y,z\in A,
    \end{equation}
whose sub-adjacent commutative associative algebra is exactly $(A,\ast_A)$. Furthermore, $(A,\circ_A,\omega)$ is a quadratic Zinbiel algebra.
\end{thm}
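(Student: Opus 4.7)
The plan is to handle the two directions separately, using the skew-symmetry and nondegeneracy of $\omega$ as the linking device.

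For the forward direction, suppose $(A,\circ_A,\omega)$ is quadratic. That $(A,\ast_A)$ is commutative associative is already recorded after \eqref{Zin-com}, so only the Connes cocycle identity remains. I would expand
\[
\omega(x\ast_A y,z)=\omega(x\circ_A y,z)+\omega(y\circ_A x,z)
\]
and apply \eqref{quadratic-condition1} to each term, rewriting them as $\omega(y,x\circ_A z+z\circ_A x)=\omega(y,x\ast_A z)$ and $\omega(x,y\circ_A z+z\circ_A y)=\omega(x,y\ast_A z)$. Using skew-symmetry of $\omega$ and commutativity of $\ast_A$, this becomes $-\omega(z\ast_A x,y)-\omega(y\ast_A z,x)$, which is exactly the desired cocycle condition $\omega(x\ast_A y,z)+\omega(y\ast_A z,x)+\omega(z\ast_A x,y)=0$. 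Nondegeneracy of $\omega$ is transported from the quadratic datum without change.

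For the converse direction, assume $(A,\ast_A,\omega)$ is a commutative associative algebra with a nondegenerate Connes cocycle. Nondegeneracy of $\omega$ ensures that the prescription $\omega(x\circ_A y,z)=\omega(y,x\ast_A z)$ uniquely defines a bilinear operation $\circ_A$. The steps I would then carry out, in order, are:
\begin{enumerate}[(i)]
\item \textbf{Sub-adjacent compatibility.} Show $x\circ_A y+y\circ_A x=x\ast_A y$. Pair both sides with an arbitrary $z$ against $\omega$; the left pairing gives $\omega(y,x\ast_A z)+\omega(x,y\ast_A z)$, which after using skew-symmetry and commutativity becomes $-\omega(z\ast_A x,y)-\omega(y\ast_A z,x)$, and the Connes cocycle identity rewrites this as $\omega(x\ast_A y,z)$. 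Nondegeneracy then finishes.
\item \textbf{Zinbiel identity.} Verify $x\circ_A(y\circ_A z)=(x\circ_A y+y\circ_A x)\circ_A z=(x\ast_A y)\circ_A z$ by pairing both sides with $w$: the left yields $\omega(y\circ_A z,x\ast_A w)=\omega(z,y\ast_A(x\ast_A w))$, while the right yields $\omega(z,(x\ast_A y)\ast_A w)$; commutativity and associativity of $\ast_A$ match them.
\item \textbf{Invariance of $\omega$.} The identity $\omega(x\circ_A y,z)=\omega(y,x\ast_A z)=\omega(y,x\circ_A z+z\circ_A x)$ follows immediately from (i) and the definition of $\circ_A$, which is exactly \eqref{quadratic-condition1}.
\end{enumerate}

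Since $\omega$ is unchanged and remains nondegenerate, $(A,\circ_A,\omega)$ is a quadratic Zinbiel algebra, and the two constructions are inverse to each other by design. The only mild subtlety I anticipate is keeping the skew-symmetry/commutativity bookkeeping straight when reducing the Connes cocycle condition to the invariance of $\omega$ in step (i); everything else is direct substitution.
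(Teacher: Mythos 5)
Your proposal is correct. The paper itself gives no proof of this theorem (it is quoted from \cite{LB}), and your argument is the standard direct verification one would expect there: both directions reduce, via skew-symmetry of $\omega$ and commutativity of $\ast_A$, to the equivalence of the invariance condition \eqref{quadratic-condition1} with the Connes cocycle identity, and the nondegeneracy of $\omega$ is used exactly where needed (to define $\circ_A$ and to convert the pairing identities in steps (i) and (ii) into equalities in $A$).
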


Next we recall Rota-Baxter operators on Zinbiel algebras and commutative associative algebras.
 A linear map $P:A \to A$ is called a {\bf Rota-Baxter operator of weight $\lambda$} on a Zinbiel algebra  $(A,\circ_A)$ if
$$ P(x)\circ_A P(y)=P(P(x)\circ_A y+x \circ_A P(y)+\lambda x \circ_A y),\quad  \forall x,y\in A. $$
Note that a Rota-Baxter operator of weight $\lambda$ is actually a relative Rota-Baxter operator of the same weight with respect to the regular representation. A {\bf Rota-Baxter Zinbiel algebra} $(A,\circ_A,P)$ of weight $\lambda$ is a Zinbiel algebra equipped with a Rota-Baxter operator of weight $\lambda$.

Let $(A,\circ_A,P)$ be a Rota-Baxter Zinbiel algebra of weight $\lambda$. Then there is a new Zinbiel multiplication $\cdot_{P}$ on $A$ defined by
$$ x \cdot_{P} y= P(x)\circ_A y+x \circ_A P(y)+\lambda x\circ_A y. $$
The Zinbiel algebra $(A,\cdot_{P})$ is called the {\bf descendent Zinbiel algebra} and denoted by $A_{P}$. Furthermore, $P$ is a Zinbiel algebra homomorphism from the Zinbiel algebra $A_{P}$ to $(A,\circ_A)$.

Recall that a {\bf Rota-Baxter operator of weight $\lambda$} on a commutative associative algebra $(A,\ast_A)$ is a linear map $P:A \to A$ satisfying
\begin{equation} \label{eq:Com-RB}
    P(x)\ast_A P(y)=P(P(x)\ast_A y+x \ast_A P(y)+\lambda x \ast_A y),\quad  \forall x,y\in A.
\end{equation}

We add Rota-Baxter operators on quadratic Zinbiel algebras and commutative associative algebras with nondegenerate Connes cocycles, and introduce the notions of quadratic Rota-Baxter Zinbiel algebras and Rota-Baxter commutative associative algebras with nondegenerate Connes cocycles.
\begin{defi}\label{quadratic RB Zinbiel algebra}
    Let $(A,\circ_A,P)$ be a Rota-Baxter Zinbiel algebra of weight $\lambda$ and $(A,\circ_A,\omega)$  a quadratic Zinbiel algebra. Then the quadruple $(A,\circ_A,P,\omega)$ is called a {\bf quadratic Rota-Baxter Zinbiel algebra of weight $\lambda$} if the following compatibility condition holds:
    \begin{eqnarray}\label{compatibility condition}
        \omega(Px,y)+   \omega(x,Py)+\lambda \omega(x,y)=0, \quad \forall x,y \in A.
    \end{eqnarray}
\end{defi}
\begin{defi}
    Let $(A,\ast_A,P)$ be a Rota-Baxter commutative associative algebra of weight $\lambda$ and $(A,\ast_A,\omega)$ be a commutative associative algebras with a nondegenerate Connes cocycle. Then $(A,\ast_A,P,\omega)$ is  called a {\bf Rota-Baxter commutative associative algebras with a nondegenerate Connes cocycle of weight $\lambda$} if $P$ and $\omega$ satisfy the compatibility condition \eqref{compatibility condition}.
\end{defi}

We now show that the relation between quadratic Zinbiel algebras and commutative associative algebras with nondegenerate Connes cocycles given in Theorem \ref{lem:Zin-Connes} also holds for quadratic Rota-Baxter Zinbiel algebras and Rota-Baxter commutative associative algebras with nondegenerate Connes cocycles of the same weight.

\begin{thm}\label{thm:Zin-Connes-RB}
Let $(A,\circ_A,P,\omega)$ be a quadratic Rota-Baxter Zinbiel algebra of weight $\lambda$. Then  $(A,\ast_A,P,\omega)$ is a Rota-Baxter commutative associative algebras with a nondegenerate Connes cocycle of of weight $\lambda$, where $\ast_A$ is given by \eqref{Zin-com}.

Conversely, let $(A,\ast_A,P,\omega)$ be a Rota-Baxter commutative associative algebras with a nondegenerate Connes cocycle of of weight $\lambda$. Then $(A,\circ_A,P,\omega)$ is a quadratic Rota-Baxter Zinbiel algebra of weight $\lambda$, where the Zinbiel algebra multiplication $\circ_A$ is given by \eqref{eq:Zin-Connes}.
\end{thm}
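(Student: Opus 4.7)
The plan is to leverage Theorem \ref{lem:Zin-Connes}, which already establishes the correspondence between quadratic Zinbiel algebras and commutative associative algebras with nondegenerate Connes cocycles. Since the compatibility condition \eqref{compatibility condition} is phrased purely in terms of $P$ and $\omega$ (it does not mention any multiplication), it transfers verbatim between the two sides. Hence the theorem reduces to showing that, under the correspondence of Theorem \ref{lem:Zin-Connes}, the linear map $P$ is a Rota-Baxter operator of weight $\lambda$ on $(A,\circ_A)$ if and only if it is a Rota-Baxter operator of weight $\lambda$ on $(A,\ast_A)$.

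For the forward direction, assume $P$ is Rota-Baxter for $\circ_A$. Symmetrizing the defining identity over the two factors and using $\ast_A = \circ_A + \circ_A^{\mathrm{op}}$ gives immediately
\begin{align*}
P(x)\ast_A P(y) &= P(x)\circ_A P(y) + P(y)\circ_A P(x) \\
&= P(P(x)\circ_A y + x\circ_A P(y) + \lambda x\circ_A y) + P(P(y)\circ_A x + y\circ_A P(x) + \lambda y\circ_A x) \\
&= P(P(x)\ast_A y + x\ast_A P(y) + \lambda x\ast_A y),
\end{align*}
so $P$ is Rota-Baxter for $\ast_A$.

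For the reverse direction, assume $P$ is Rota-Baxter for $\ast_A$. Since $\omega$ is nondegenerate, it suffices to prove that
\[
\omega(P(x)\circ_A P(y) - P(P(x)\circ_A y + x\circ_A P(y) + \lambda x\circ_A y),\; z) = 0
\]
for all $x,y,z \in A$. I would repeatedly apply \eqref{eq:Zin-Connes} to move each $\circ_A$ out of the first slot of $\omega$, and apply the compatibility condition \eqref{compatibility condition}, rewritten as $\omega(Pa,b) = -\omega(a,Pb) - \lambda\omega(a,b)$, to strip every occurrence of $P$ off the outer entry. Grouping the resulting terms according to whether the first argument of $\omega$ is $y$ or $P(y)$ collapses the expression to
\[
\omega(y,\; P(x)\ast_A P(z) + \lambda P(x)\ast_A z + \lambda x\ast_A P(z) + \lambda^2 x\ast_A z) + \omega(P(y),\; P(x)\ast_A z + x\ast_A P(z) + \lambda x\ast_A z).
\]
Setting $w = P(x)\ast_A z + x\ast_A P(z) + \lambda x\ast_A z$ and invoking the Rota-Baxter identity for $\ast_A$ converts the first argument into $P(w) + \lambda w$; one further application of \eqref{compatibility condition} rewrites the first term as $-\omega(P(y),w)$, which cancels the second term exactly.

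The main obstacle is the bookkeeping in the reverse direction: one must carefully propagate the compatibility condition through the seven terms obtained after expanding $P(P(x)\circ_A y + x\circ_A P(y) + \lambda x\circ_A y)$, keeping track of signs and the weight-$\lambda$ corrections, and then recognize the precise combination that is annihilated by the Rota-Baxter identity for $\ast_A$. Once this calculation is executed, nondegeneracy of $\omega$ and Theorem \ref{lem:Zin-Connes} finish the proof.
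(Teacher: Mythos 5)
Your proposal is correct and follows essentially the same route as the paper: the forward direction is the symmetrization observation, and the reverse direction pairs the Rota--Baxter defect for $\circ_A$ against $z$ under $\omega$, uses \eqref{eq:Zin-Connes} and the compatibility condition \eqref{compatibility condition} to convert everything to $\ast_A$, and concludes by the Rota--Baxter identity for $\ast_A$ together with nondegeneracy of $\omega$. Your grouping into $\omega(y,\cdot)$ and $\omega(P(y),\cdot)$ blocks and the identification of $P(w)+\lambda w$ with $w=P(x)\ast_A z+x\ast_A P(z)+\lambda x\ast_A z$ is just a slightly tidier bookkeeping of the same cancellation the paper carries out term by term.
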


\begin{proof}
It is straightforward to deduce that if $P:A \rightarrow A$ is a  Rota-Baxter operator of weight $\lambda$ on a Zinbiel algebra $(A,\circ_A)$, then $P$  is a  Rota-Baxter operator of weight $\lambda$ on the sub-adjacent commutative associative algebra $(A^c,\ast_A)$. Since  $(A,\circ_A,\omega)$ is a quadratic Zinbiel algebra, by Theorem \ref{lem:Zin-Connes},   $(A,\ast_A,\omega)$ is a commutative associative algebra with a nondegenerate Connes cocycle $\omega$. Therefore, if $(A,\circ_A,P,\omega)$ is a quadratic Rota-Baxter Zinbiel algebra of weight $\lambda$, then $(A,\ast_A,P,\omega)$ is a Rota-Baxter commutative associative algebra with a nondegenerate Connes cocycle of weight $\lambda$.

Conversely, let $(A,\ast_A,P,\omega)$ be a Rota-Baxter commutative associative algebra with a nondegenerate Connes cocycle of weight $\lambda$. First by Theorem \ref{lem:Zin-Connes}, $(A,\circ_A,\omega)$ is a quadratic Zinbiel algebra.  In the following, we show that $P$ is a Rota-Baxter operator of weight $\lambda$ on the Zinbiel algebra $(A,\circ_A)$. By \eqref{eq:Zin-Connes} and \eqref{compatibility condition}, for all $x,y,z\in A,$ we have
    \begin{eqnarray*}
    &&\omega(P(x)\circ_A P(y)-P(P(x)\circ_A y+x \circ_A P(y)+\lambda x \circ_A y),z)\\
    &=& \omega(P(x)\circ_A P(y),z)-\omega(P(P(x)\circ_A y),z)-\omega(B(x\circ_A B(y)),z)-\lambda \omega(B(x \circ_A  y),z)\\
    &=& \omega(P(y),P(x)\ast_A z)+\omega(P(x)\circ_A y,P(z))+\lambda\omega(P(x)\circ_A y,z)+\omega(x\circ_A P(y),P(z))\\
    &&+\lambda\omega(x\circ_A P(y),z)+\lambda\omega(x\circ_A y,P(z))+\lambda^2 \omega(x\circ_A y,z)\\
    &=& -\omega(y,P(P(x)\ast_A z))-\lambda\omega(y,P(x)\ast_A z)+\omega(y,P(x)\ast_A P(z))
    +\lambda\omega(y,P(x)\ast_A z)\\
    &&+\omega(P(y),x\ast_A P(z))+\lambda\omega(P(y),x\ast_A z)+\lambda\omega(y,x\ast_A P(z))+\lambda^2 \omega(y,x\ast_A z)\\
    &=& -\omega(y,P(P(x)\ast_A z))+\omega(y,P(x)\ast_A P(z))-\omega(y,P(x\ast_A P(z)))-\lambda\omega(y,x\ast_A P(z))\\
    && -\lambda\omega(y,P(x\ast_A z))-\lambda^2 \omega(y,x\ast_A z)+\lambda\omega(y,x\ast_A P(z))+\lambda^2 \omega(y,x\ast_A z)\\
    &=& -\omega(y,P(P(x)\ast_A z))+\omega(y,P(x)\ast_A P(z))-\omega(y,P(x\ast_A P(z)))-\lambda\omega(y,P(x\ast_A z))\\
    &=&0.
    \end{eqnarray*}
The last equality follows from the fact that $P$ is a Rota-Baxter operator of weight $\lambda$ on the commutative associative algebra algebra $(A,\ast_A)$. Furthermore, by the nondegeneracy of $\omega$, we have
    $$ P(x)\circ_A P(y)-P(P(x)\circ_A y+x \circ_A P(y)+\lambda x \circ_A y)=0,\quad \forall x,y\in A,  $$
which implies that $P$ is a Rota-Baxter operator of weight $\lambda$ on the Zinbiel algebra $(A,\circ_A)$. Therefore, $(A,\circ_A,P,\omega)$ is a quadratic Rota-Baxter Zinbiel algebra of weight $\lambda$.
\end{proof}

Let $(A,\ast_A)$ be a commutative associative algebra. Recall that $(A,\ast_A,\frkB)$ is a {\bf quadratic commutative associative algebra} if $\frkB$ is a nondegenerate symmetric bilinear form on $A$ such that $\frkB(x\ast_A y,z)=\frkB(x,y \ast_A z)$ for all $x,y,z\in A$.

\begin{ex}{\rm
Let $(A,\ast_A,\frkB,\omega)$ be a quadratic commutative associative algebra with a nondegenerate Connes cocycle $\omega$. Define $D:A\longrightarrow A$ by
$$ \omega(x,y)= \frkB(Dx,y),\quad \forall x,y\in A. $$
It is straightforward to verify that $D$ is a skew-symmetric invertible derivation on the quadratic commutative associative algebra $(A,\ast_A,\frkB)$. Denote the inverse of $D$ by $P$. Then $P$ is a Rota-Baxter operator of weight $0$ on $A$. Moreover, $(A,\ast_A,P,\omega)$ is a Rota-Baxter commutative associative algebra with a nondegenerate Connes cocycle of weight $0$.

Since $D$ is a skew-symmetric invertible derivation on $A$, there exists a Manin decomposition $A=A_+ \oplus A_-$, where $A_+$ and $A_-$ denote the sum of the weight spaces of positive and negative eigenvalues of $D$ {\rm(\cite{AGMM})}. For any $ \lambda \in \mathbb{K}, $ define linear maps $P_+,P_-:A \longrightarrow A$ by
$$ P_+ (x+\xi)=-\lambda x, \quad P_- (x+\xi)=-\lambda \xi, \quad \forall x\in A_+,\xi\in A_-.  $$
Then $P_+$ and $P_-$ are Rota-Baxter operators of weight $\lambda$ on $A$. It is straightforward to check that $(A,\ast_A,P_+,\omega)$ and $(A,\ast_A,P_-,\omega)$ are Rota-Baxter commutative associative algebras with a nondegenerate Connes cocycle of weight $\lambda$.
}
\end{ex}
\subsection{Factorizable Zinbiel bialgebras and quadractic Rota-Baxter Zinbiel algebras}

The following theorem shows that a factorizable Zinbiel bialgebra gives a quadratic Rota-Baxter Zinbiel algebra.
\begin{thm}\label{Factorizable Zinbiel algebra}
Let $(A,A_{r}^{\ast})$ be a factorizable Zinbiel bialgebra with $I=r_{+}-r_{-}$. Then $(A,P,\omega_{I})$ is a quadratic Rota-Baxter Zinbiel algebra of weight $\lambda$, where the linear map $P:A \to A$ and $\omega_{I}\in \wedge^{2} A^{\ast}$ are defined respectively by
\begin{eqnarray}
\label{P}P&=&\lambda r_{-}\circ I^{-1},\\
\label{SI}\omega_I(x,y)&=&\langle I^{-1}x,y \rangle, \quad \forall x,y \in A.
\end{eqnarray}
\end{thm}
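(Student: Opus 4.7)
\medskip

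\noindent\textbf{Proof plan.} The statement bundles three claims: (a) $(A,\circ_A,\omega_I)$ is a quadratic Zinbiel algebra, (b) $P=\lambda r_-\circ I^{-1}$ is a Rota-Baxter operator of weight $\lambda$ on $(A,\circ_A)$, and (c) the compatibility $\omega_I(Px,y)+\omega_I(x,Py)+\lambda\omega_I(x,y)=0$ holds. I would treat these in turn, the most substantive being (b).

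For (a), non-degeneracy of $\omega_I$ is immediate from invertibility of $I$. Skew-symmetry follows from $I^{\ast}=-I$: since $(I^{-1})^{\ast}=(I^{\ast})^{-1}=-I^{-1}$, one computes $\omega_I(y,x)=\langle I^{-1}y,x\rangle=\langle y,(I^{-1})^{\ast}x\rangle=-\langle I^{-1}x,y\rangle=-\omega_I(x,y)$. Invariance is the crucial part; here I would apply Proposition \ref{invariance2}, which gives $L_x\circ I=-I\circ(L_x^{\ast}+R_x^{\ast})$, and rearrange to $I^{-1}\circ L_x=-(L_x^{\ast}+R_x^{\ast})\circ I^{-1}$. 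Then
\begin{eqnarray*}
\omega_I(x\circ_A y,z)&=&\langle I^{-1}(L_x y),z\rangle\;=\;-\langle (L_x^{\ast}+R_x^{\ast})I^{-1}y,z\rangle\\
&=&\langle I^{-1}y,x\circ_A z+z\circ_A x\rangle\;=\;\omega_I(y,x\circ_A z+z\circ_A x),
\end{eqnarray*}
which is precisely \eqref{quadratic-condition1}.

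For (b), I would set $\xi=I^{-1}x$ and $\eta=I^{-1}y$, so that $x=(r_+-r_-)\xi$, $y=(r_+-r_-)\eta$, $Px=\lambda r_-\xi$, and $Py=\lambda r_-\eta$. The plan is to expand
\[
Px\circ_A y+x\circ_A Py+\lambda\, x\circ_A y
\]
in terms of the four products $r_{\pm}\xi\circ_A r_{\pm}\eta$. The telescoping I expect is that all mixed terms cancel and only
\[
\lambda\bigl(r_+\xi\circ_A r_+\eta-r_-\xi\circ_A r_-\eta\bigr)
\]
survives. By Theorem \ref{heart}, $r_\pm\colon(A^{\ast},\cdot_r)\to(A,\circ_A)$ are Zinbiel algebra homomorphisms, so this equals $\lambda\bigl(r_+(\xi\cdot_r\eta)-r_-(\xi\cdot_r\eta)\bigr)=\lambda I(\xi\cdot_r\eta)$. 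Applying $P=\lambda r_-I^{-1}$ then yields $\lambda^2 r_-(\xi\cdot_r\eta)=\lambda^2\, r_-\xi\circ_A r_-\eta=Px\circ_A Py$, which is the Rota-Baxter identity of weight $\lambda$. I expect the main obstacle to be the bookkeeping of this cancellation; a careful term-by-term expansion with the substitution $I=r_+-r_-$ on both factors should make it transparent.

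For (c), with the same substitution and using the dualities $r_+^{\ast}=r_-$, $r_-^{\ast}=r_+$, $I^{\ast}=-I$, one computes $\omega_I(Px,y)=\lambda\langle I^{-1}r_-\xi,I\eta\rangle=-\lambda\langle r_-\xi,\eta\rangle=-\lambda r(\eta,\xi)$, $\omega_I(x,Py)=\lambda\langle\xi,r_-\eta\rangle=\lambda r(\xi,\eta)$, and $\lambda\omega_I(x,y)=\lambda\langle\xi,I\eta\rangle=\lambda\bigl(r(\eta,\xi)-r(\xi,\eta)\bigr)$. The three contributions sum to zero, completing the proof. Steps (a) and (c) are essentially bookkeeping with dualities, so the real work is concentrated in verifying the telescoping identity in (b).
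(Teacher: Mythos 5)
Your proposal is correct and follows essentially the same route as the paper: skew-symmetry from $I^{\ast}=-I$, invariance via Proposition \ref{invariance2}, the Rota--Baxter identity from the telescoping $Px\circ_A y+x\circ_A Py+\lambda x\circ_A y=\lambda\bigl(r_+\xi\circ_A r_+\eta-r_-\xi\circ_A r_-\eta\bigr)=\lambda I(\xi\cdot_r\eta)$ combined with Theorem \ref{heart}, and the compatibility from $r_-^{\ast}=r_+$ and $I=r_+-r_-$. The only cosmetic difference is that the paper obtains the telescoping by substituting $r_+=I+r_-$ rather than expanding both factors, which is the same cancellation.
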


\begin{proof}
Since $r_{+},r_{-}:(A^{\ast},\cdot_{r}) \longrightarrow (A,\circ_A)$ are both Zinbiel algebra homomorphisms, for all $x,y\in A$, we have
\begin{eqnarray}\label{RB operator in fac}
I(I^{-1}x \cdot_{r} I^{-1}y )&=&(r_{+}-r_{-})(I^{-1}x \cdot_{r} I^{-1}y )\\
\nonumber&=&((I+r_{-})I^{-1}x) \circ_A ((I+r_{-})I^{-1}y)- (r_{-}I^{-1}x)\circ_A (r_{-}I^{-1}y)\\
\nonumber&=& (r_{-}I^{-1}x) \circ_A y+x \circ_A (r_{-}I^{-1}y)+x \circ_A y.
\end{eqnarray}
Therefore, we have
\begin{eqnarray*}
P(P(x) \circ_A y+x\circ_A P(y) +\lambda x\circ_A y)&=& \lambda^{2} r_{-}I^{-1}\Big((r_{-}I^{-1}x) \circ_A y+x \circ_A (r_{-}I^{-1}y)+x \circ_A y \Big)\\
&=& \lambda^{2}r_{-}(I^{-1}x \cdot_{r} I^{-1}y )\\
&=& \lambda^{2}(r_{-}I^{-1}x \circ_A r_{-}I^{-1}y )\\
&=& P(x)\circ_A P(y),
\end{eqnarray*}
which implies that $P$ is a Rota-Baxter operator of weight $\lambda$ on $A$.

Next we show that $(A,\circ_A,P,\omega_{I})$ is a quadratic Rota-Baxter Zinbiel algebra. Since $I^{\ast}=-I$, we have
$$ \omega_{I}(x,y)=\langle I^{-1}x,y \rangle=-\langle x,I^{-1}y \rangle=-\omega_{I}(y,x),$$
 which means that $ \omega_{I}$ is skew-symmetric.

Since the skew-symmetric part $a$ of $r$ is $(L,L+R)$-invariant, by Proposition \ref{invariance2},
we have $ I^{-1} \circ L_x =-(L_x^*+R_x^*)\circ I^{-1}$. Thus
\begin{eqnarray*}
\omega_{I}(x\circ_A y,z)-\omega_{I}(y,x\ast_A z)
&=&\langle I^{-1}(x\circ_A y),z  \rangle-\langle I^{-1}(y),x\ast_A z \rangle\\
&=&\langle I^{-1}\circ L_x(y)+(L_x^*+R_x^*) \circ I^{-1}(y),z \rangle\\
&=&0,
\end{eqnarray*}
which implies that \eqref{quadratic-condition1} holds.

Moreover, by using $r^{\ast}_{-}=r_{+}$ and $I=r_{+}-r_{-}$, we have
\begin{eqnarray*}
\omega_{I}(x,Py)+\omega_{I}(Px,y)+\lambda \omega_{I}(x,y)&=&\lambda\Big(\langle I^{-1}(x), r_{-}I^{-1}(y) \rangle+\langle  I^{-1} r_{-} I^{-1}(x),y \rangle+\langle I^{-1}x,y \rangle \Big)\\
&=& \lambda\langle(-I^{-1} r_{+} I^{-1}+I^{-1} r_{-} I^{-1}+ I^{-1})(x),y \rangle\\
&=& 0,
\end{eqnarray*}
which implies that \eqref{compatibility condition} holds.

Therefore, $(A,\circ_A,P,\omega_{I})$ is a quadratic Rota-Baxter Zinbiel algebra of weight $\lambda$.
\end{proof}

It is straightforward to check that if $P:A \to A$ is a Rota-Baxter operator of weight $\lambda$ on a Zinbiel algebra $(A,\circ_A)$, then
\begin{eqnarray}
\widetilde{P}:=-\lambda\Id-P
\end{eqnarray}
is also a Rota-Baxter operator of weight $\lambda$.
\begin{cor}
Let $(A,A_{r}^{\ast})$ be a factorizable Zinbiel bialgebra with $I=r_{+}-r_{-}$. Then $(A,\circ_A,\widetilde{P},\omega_{I})$ is also a quadratic Rota-Baxter Zinbiel algebra of weight $\lambda$, where $\widetilde{P}=-\lambda\Id-P=-\lambda r_{+}\circ I^{-1}$ and $\omega_{I}\in \wedge^{2} A^{\ast}$ is defined by
\eqref{SI}.
\end{cor}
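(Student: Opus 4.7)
The plan is to reduce the corollary to Theorem~\ref{Factorizable Zinbiel algebra} together with the elementary involution $P \mapsto -\lambda\,\Id - P$ on the space of Rota-Baxter operators of weight $\lambda$. Essentially no new structural work is required: the quadratic Zinbiel structure on $A$ with skew-symmetric form $\omega_I$ is the same one already produced in Theorem~\ref{Factorizable Zinbiel algebra}, so only the identification of $\widetilde P$ and the compatibility~\eqref{compatibility condition} between $\widetilde P$ and $\omega_I$ need to be checked.

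First I would verify the two claimed formulas for $\widetilde P$. By definition $\widetilde P = -\lambda\,\Id - P$, and plugging in $P = \lambda\, r_-\circ I^{-1}$ together with $I = r_+ - r_-$ gives
\[
\widetilde P = -\lambda\,\Id - \lambda\, r_-\circ I^{-1} = -\lambda\,(I + r_-)\circ I^{-1} = -\lambda\, r_+\circ I^{-1},
\]
as required. Next I would invoke the remark preceding the corollary: since $P$ is a Rota-Baxter operator of weight $\lambda$ on $(A,\circ_A)$ (established in Theorem~\ref{Factorizable Zinbiel algebra}), its ``dual'' $\widetilde P = -\lambda\,\Id - P$ is automatically also a Rota-Baxter operator of weight $\lambda$. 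The quadratic condition $\omega_I(x\circ_A y, z) = \omega_I(y, x \ast_A z)$ and the skew-symmetry of $\omega_I$ are already part of Theorem~\ref{Factorizable Zinbiel algebra}, so $(A,\circ_A,\omega_I)$ is a quadratic Zinbiel algebra without any further argument.

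The only genuinely new thing to check is the compatibility \eqref{compatibility condition} for the pair $(\widetilde P,\omega_I)$. I would compute directly, using that $(P,\omega_I)$ already satisfies \eqref{compatibility condition} by Theorem~\ref{Factorizable Zinbiel algebra}:
\begin{eqnarray*}
\omega_I(\widetilde P x, y) + \omega_I(x, \widetilde P y) + \lambda\,\omega_I(x,y)
&=& \omega_I(-\lambda x - Px, y) + \omega_I(x, -\lambda y - Py) + \lambda\,\omega_I(x,y)\\
&=& -\lambda\,\omega_I(x,y) - \omega_I(Px,y) - \lambda\,\omega_I(x,y) - \omega_I(x, Py) + \lambda\,\omega_I(x,y)\\
&=& -\lambda\,\omega_I(x,y) - \bigl(\omega_I(Px,y) + \omega_I(x, Py)\bigr)\\
&=& -\lambda\,\omega_I(x,y) + \lambda\,\omega_I(x,y) \;=\; 0,
\end{eqnarray*}
where the penultimate equality uses $\omega_I(Px,y)+\omega_I(x,Py) = -\lambda\,\omega_I(x,y)$.

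There is no real obstacle; the corollary is a short consequence of Theorem~\ref{Factorizable Zinbiel algebra} combined with the involution $P\mapsto -\lambda\,\Id - P$. The only mildly delicate point is keeping track of signs in the identification $-\lambda\,\Id - P = -\lambda\, r_+\circ I^{-1}$, which is why I would begin the proof with that computation before reducing the compatibility check to the analogous one for $P$.
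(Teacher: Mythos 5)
Your proposal is correct and follows essentially the same route as the paper: both reduce everything to Theorem~\ref{Factorizable Zinbiel algebra} plus the observation that $\widetilde{P}=-\lambda\Id-P$ is again a Rota-Baxter operator of weight $\lambda$, and then verify the compatibility condition \eqref{compatibility condition} for $\widetilde{P}$ by the same direct expansion using the already-established identity $\omega_I(Px,y)+\omega_I(x,Py)+\lambda\omega_I(x,y)=0$. Your extra check that $-\lambda\Id-P=-\lambda\, r_+\circ I^{-1}$ is a harmless (and welcome) addition that the paper states without proof.
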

\begin{proof}
Since $(A,A_{r}^{\ast})$ is a factorizable Zinbiel bialgebra,  by Theorem \ref{Factorizable Zinbiel algebra},  $P$ satisfies the compatibility  condition \eqref{compatibility condition}.  Thus we have
\begin{eqnarray*}
&&\omega_{I}(x,\widetilde{P}y)+\omega_{I}(\widetilde{P}x,y)+\lambda \omega_{I}(x,y)\\
&=& -\omega_{I}(x,Py)-\lambda\omega_{I}(x,y)-\omega_{I}(Px,y)-\lambda \omega_{I}(x,y)+\lambda \omega_{I}(x,y)\\
&=& -\omega_{I}(x,Py)-\omega_{I}(Px,y)-\lambda\omega_{I}(x,y)\\
&=& 0.
\end{eqnarray*}
This implies that $(A,\widetilde{P},\omega_{I})$ is  a quadratic Rota-Baxter Zinbiel algebra of weight $\lambda$.
\end{proof}

\begin{cor}\label{Zinbiel bialgebra isomorphism}
Let $(A,A_{r}^{\ast})$ be a factorizable Zinbiel bialgebra with $I=r_{+}-r_{-}$ and $P=\lambda r_{-}\circ I^{-1}$  the induced Rota-Baxter operator of weight $\lambda$. Then $((A_{P},\cdot_{P}),(A^{\ast},\cdot_{I}))$ is a Zinbiel bialgebra, where
$$ \xi \cdot_{I} \eta:= -\lambda I^{-1}\Big((\frac{1}{\lambda} I \xi)\circ_A(\frac{1}{\lambda} I \eta)\Big),\quad \forall \xi,\eta\in A^{\ast},~\lambda\neq 0.  $$
Moreover, $\frac{1}{\lambda} I:A^{\ast} \longrightarrow A$ gives a Zinbiel bialgebra isomorphism from $(A_{r}^{\ast},A)$ to $((A_{P},\cdot_{P}),(A^{\ast},\cdot_{I}))$.
\end{cor}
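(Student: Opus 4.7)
The plan is to deduce the corollary directly from Proposition \ref{pro:induced isomorphism} applied to the dual Zinbiel bialgebra $(A_r^{\ast}, A, \beta, \alpha)$, which is itself a Zinbiel bialgebra by Example \ref{dual Zinbiel alg}, together with the linear isomorphism $\varphi := \tfrac{1}{\lambda} I : A^{\ast} \to A$. The proposition then automatically transports the bialgebra structure to one on $A$ with dual on $A^{\ast}$, producing a Zinbiel bialgebra isomorphism. All that remains is to identify the two induced products with $\cdot_P$ and $\cdot_I$ respectively.

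For the product on $A$: the transport formula of Proposition \ref{pro:induced isomorphism} gives
\begin{equation*}
x \cdot y \;=\; \varphi\bigl(\varphi^{-1}(x)\cdot_r \varphi^{-1}(y)\bigr) \;=\; \tfrac{1}{\lambda}\, I\bigl((\lambda I^{-1}x)\cdot_r (\lambda I^{-1}y)\bigr) \;=\; \lambda\, I\bigl(I^{-1}x \cdot_r I^{-1}y\bigr).
\end{equation*}
Equation \eqref{RB operator in fac}, established within the proof of Theorem \ref{Factorizable Zinbiel algebra}, reads $I(I^{-1}x\cdot_r I^{-1}y) = (r_- I^{-1}x)\circ_A y + x\circ_A (r_- I^{-1}y) + x\circ_A y$. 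Multiplying by $\lambda$ and using $P = \lambda r_- \circ I^{-1}$ yields $P(x)\circ_A y + x\circ_A P(y) + \lambda\, x\circ_A y$, which is exactly $x\cdot_P y$.

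For the product on $A^{\ast}$: since $I^{\ast} = -I$, we have $\varphi^{\ast} = -\tfrac{1}{\lambda} I$ and $(\varphi^{\ast})^{-1} = -\lambda I^{-1}$. The second transport formula then gives
\begin{equation*}
\xi\cdot\eta \;=\; (\varphi^{\ast})^{-1}\bigl(\varphi^{\ast}(\xi)\circ_A \varphi^{\ast}(\eta)\bigr) \;=\; -\lambda\, I^{-1}\bigl((-\tfrac{1}{\lambda} I\xi)\circ_A (-\tfrac{1}{\lambda} I\eta)\bigr) \;=\; -\lambda\, I^{-1}\bigl((\tfrac{1}{\lambda} I\xi)\circ_A (\tfrac{1}{\lambda} I\eta)\bigr),
\end{equation*}
which is precisely $\xi \cdot_I \eta$. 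Combining both identifications, Proposition \ref{pro:induced isomorphism} guarantees that $((A_P,\cdot_P),(A^{\ast},\cdot_I))$ is a Zinbiel bialgebra and that $\tfrac{1}{\lambda} I : A_r^{\ast} \to A_P$ is a Zinbiel bialgebra isomorphism.

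The only real subtlety is bookkeeping: one must keep straight which of $A$ and $A^{\ast}$ plays the role of ``the algebra'' versus ``its dual'' in Proposition \ref{pro:induced isomorphism} (hence the initial passage to the dual bialgebra), and one must correctly track the sign coming from $I^{\ast} = -I$ when computing $\varphi^{\ast}$. Once these are set up, the verification reduces to the two substitutions above, and equation \eqref{RB operator in fac} already supplies the key identity needed to convert $\cdot_r$ into $\cdot_P$.
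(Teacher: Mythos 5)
Your proposal is correct and follows essentially the same route as the paper: both pass to the dual bialgebra $(A_r^{\ast},A)$ via Example \ref{dual Zinbiel alg}, apply Proposition \ref{pro:induced isomorphism} with $\varphi=\tfrac{1}{\lambda}I$, and use equation \eqref{RB operator in fac} to identify the transported product on $A$ with $\cdot_P$ and the sign $I^{\ast}=-I$ to identify the product on $A^{\ast}$ with $\cdot_I$. The only difference is presentational — the paper verifies the two algebra-isomorphism identities first and then cites the proposition, whereas you invoke the proposition first and then match the induced products — but the computations are identical.
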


\begin{proof}
By Example \ref{dual Zinbiel alg}, we know that if $(A,A_{r}^{\ast})$ is a Zinbiel bialgebra, then $(A_{r}^{\ast},A)$ is also a Zinbiel bialgebra.
First we show that $\frac{1}{\lambda} I:A_{r}^{\ast} \longrightarrow A_{P}$ is a Zinbiel algebra isomorphism. In fact, for any $\xi,\eta \in A^{\ast},$ taking $x=I\xi\in A$ and $y=I\eta \in A$, by \eqref{RB operator in fac}, we have
\begin{eqnarray*}
\frac{1}{\lambda}I (\xi\cdot_{r}\eta)=\frac{1}{\lambda^{2}}( PI\xi\circ_A I\eta+I\xi\circ_A PI\eta   +\lambda I\xi\circ_A I\eta)= (\frac{1}{\lambda}I\xi)\cdot_{P} (\frac{1}{\lambda}I\eta).
\end{eqnarray*}
So $\frac{1}{\lambda} I$ is a Zinbiel algebra isomorphism.

Since $(\frac{1}{\lambda}I)^{\ast}=-\frac{1}{\lambda}I$, we have
\begin{eqnarray*}
(\frac{1}{\lambda}I)^{\ast} (\xi \cdot_{I} \eta)=\Big((-\frac{1}{\lambda} I \xi)\circ_A (-\frac{1}{\lambda} I \xi)\Big)=(\frac{1}{\lambda}I)^{\ast}(\xi)\circ_A (\frac{1}{\lambda}I)^{\ast}(\eta),
\end{eqnarray*}
which means that $(\frac{1}{\lambda}I)^{\ast}=-\frac{1}{\lambda}I:(A^{\ast},\cdot_{I})\longrightarrow (A,\circ_A)$ is also a Zinbiel algebra isomorphism. Since $(A_{r}^{\ast},A)$ is a Zinbiel bialgebra, by Proposition \ref{pro:induced isomorphism}, the pair $((A_{P},\cdot_{P}),(A^{\ast},\cdot_{I}))$ is also a Zinbiel bialgebra. Obviously, $\frac{1}{\lambda} I$ is a Zinbiel bialgebra isomorphism.
\end{proof}

By Theorems \ref{thm:Zin-Connes-RB} and   \ref{Factorizable Zinbiel algebra}, we have
\begin{cor}
    Let $(A,A_{r}^{\ast})$ be a factorizable Zinbiel bialgebra with $I=r_{+}-r_{-}$. Then $(A,\ast_{A},P,\omega_I)$ is a Rota-Baxter commutative associative algebra with a nondegenerate Connes cocycle $\omega_I$ of weight $\lambda$, where the linear map $P:A\longrightarrow A$ and $\omega_{I}\in \wedge^{2} A^{\ast}$ are defined by \eqref{P} and \eqref{SI}, respectively.
\end{cor}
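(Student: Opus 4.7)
The plan is to chain the two earlier theorems together, with essentially no extra calculation needed. First I would invoke Theorem \ref{Factorizable Zinbiel algebra} on the given factorizable Zinbiel bialgebra $(A,A_r^*)$: this produces the Rota-Baxter operator $P=\lambda r_{-}\circ I^{-1}$ of weight $\lambda$ on the Zinbiel algebra $(A,\circ_A)$, together with the nondegenerate skew-symmetric form $\omega_I(x,y)=\langle I^{-1}x,y\rangle$ which is invariant in the sense of \eqref{quadratic-condition1} and which satisfies the compatibility condition \eqref{compatibility condition}; in other words $(A,\circ_A,P,\omega_I)$ is a quadratic Rota-Baxter Zinbiel algebra of weight $\lambda$ in the sense of Definition \ref{quadratic RB Zinbiel algebra}.

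Next I would apply the "only if" direction of Theorem \ref{thm:Zin-Connes-RB} to this quadratic Rota-Baxter Zinbiel algebra. That theorem transports the entire package to the sub-adjacent commutative associative algebra $(A,\ast_A)$ defined by \eqref{Zin-com}: it asserts that $(A,\ast_A,P,\omega_I)$ is automatically a Rota-Baxter commutative associative algebra with a nondegenerate Connes cocycle of weight $\lambda$. Concretely, the Rota-Baxter identity for $P$ on $(A,\ast_A)$ follows from the identity on $(A,\circ_A)$ by symmetrizing in the two arguments, the Connes cocycle property of $\omega_I$ on $(A,\ast_A)$ follows from the skew-symmetry of $\omega_I$ together with \eqref{quadratic-condition1}, nondegeneracy is preserved since the underlying form is unchanged, and the compatibility condition \eqref{compatibility condition} relating $P$ and $\omega_I$ is the same condition in both contexts.

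Since the composite of these two implications yields exactly the conclusion of the corollary, there is no substantive obstacle — the statement is a formal concatenation. The only thing worth verifying explicitly, should one wish to make the proof self-contained rather than a one-line citation, is that the $\ast_A$ appearing in the target of Theorem \ref{thm:Zin-Connes-RB} is literally the sub-adjacent commutative associative multiplication of the Zinbiel algebra $(A,\circ_A)$ produced by Theorem \ref{Factorizable Zinbiel algebra}, which is immediate from the definition \eqref{Zin-com}. Thus the proof reduces to the single sentence: apply Theorem \ref{Factorizable Zinbiel algebra} and then Theorem \ref{thm:Zin-Connes-RB}.
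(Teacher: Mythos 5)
Your proposal is exactly the paper's argument: the corollary is stated there with the preamble ``By Theorems \ref{thm:Zin-Connes-RB} and \ref{Factorizable Zinbiel algebra}'' and no further proof, i.e.\ it is precisely the concatenation of Theorem \ref{Factorizable Zinbiel algebra} (producing the quadratic Rota-Baxter Zinbiel algebra $(A,\circ_A,P,\omega_I)$) with the forward direction of Theorem \ref{thm:Zin-Connes-RB}. Your additional remark that the $\ast_A$ in question is the sub-adjacent multiplication \eqref{Zin-com} is a correct and harmless elaboration.
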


\begin{ex}\label{Zinbiel double}{\rm
Let $(\frkd,\frkd_r^{\ast})$ be a factorizable Zinbiel bialgebra given in Theorem \ref{Z-double-fac}.
By Theorem \ref{Factorizable Zinbiel algebra},  $(\frkd,\cdot_{\frkd},P,\omega_I)$ is a quadratic Rota-Baxter Zinbiel algebra of weight $\lambda$,  where
\begin{eqnarray*}
P(x,\xi)&=&\lambda r_{-}\circ I^{-1} (x,\xi)=-\lambda(x,0),\\
\omega_I(x+\xi,y+\eta)&=& \langle I^{-1}(x+\xi),y+\eta \rangle=\langle -x+\xi,y+\eta \rangle=\xi(y)-\eta(x),
\end{eqnarray*}
for all $x,y\in A, \xi,\eta\in A^*$. Note that
$\omega_I$ is exactly the bilinear form given by {\rm(\ref{natural-bilinear-form})}.
Furthermore, $(\frkd,[\cdot,\cdot]_{\frkd},P,\omega_I)$ is a Rota-Baxter commutative associative algebra with a nondegenerate Connes cocycle $\omega_I$ of weight $\lambda$, where the commutative associative multiplication $\ast_{\frkd}$ is given by
$$ (x+\xi)\ast_{\frkd} (y+\eta)=x \ast_A y-\frkL^*_\xi y-\frkL^*_\eta x+
\xi \ast_{A^*} \eta-L^{\ast}_{x}\eta-L^{\ast}_{y} \xi, \quad \forall x,y\in A,\xi,\eta\in A^*. $$
}
\end{ex}

At the end of this section, we show that a quadratic Rota-Baxter Zinbiel algebra of nonzero weight can  give rise to a factorizable Zinbiel bialgebra.

\begin{thm}\label{thm:QRB-facpre}
    Let $(A,P,\omega)$ be a quadratic Rota-Baxter Zinbiel algebra of weight $\lambda$ $(\lambda\neq 0),$ and
    $ \huaI_{\omega}: A^{\ast}\longrightarrow A $ the induced linear isomorphism given by $\langle\huaI_{\omega}^{-1}x,y \rangle :=\omega(x,y).$ Then $r \in A\otimes A $ defined by
  \begin{equation}\label{eq:equiv} r_{+}:=\frac{1}{\lambda} (P+\lambda
\Id)\circ \huaI_{\omega}:A^{\ast} \longrightarrow A, \quad
r_{+}(\xi)=r(\xi,\cdot), \quad \forall \xi\in A^{\ast}
\end{equation}
    satisfies $\llbracket r,r\rrbracket=0$ and thus gives rise to a factorizable Zinbiel bialgebra $(A,A_{r}^{\ast})$.
\end{thm}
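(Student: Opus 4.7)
The plan is to verify the three conditions that make $r$ give rise to a factorizable Zinbiel bialgebra: the linear map $I := r_+ - r_-$ is a linear isomorphism, the skew-symmetric part of $r$ is $(L,L+R)$-invariant, and $\llbracket r,r\rrbracket = 0$.

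First I would compute $r_-$ from the formula $r_+ = \frac{1}{\lambda}(P+\lambda\Id)\circ\huaI_\omega$. The skew-symmetry of $\omega$ gives $\huaI_\omega^* = -\huaI_\omega$, and the compatibility condition \eqref{compatibility condition} translates to $\huaI_\omega P^* = -(P+\lambda\Id)\huaI_\omega$. Since $r_-$ is the transpose of $r_+$ viewed as a map $A^* \to A$, a short manipulation then yields $r_- = \frac{1}{\lambda}P\circ\huaI_\omega$. Consequently $I = r_+ - r_- = \huaI_\omega$ is a linear isomorphism, which will guarantee factorizability as soon as the bialgebra is produced.

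Next I would check the $(L,L+R)$-invariance of the skew-symmetric part $a$ of $r$. Since $2a_+ = I = \huaI_\omega$, Proposition \ref{invariance2} reduces this to the identity $\huaI_\omega \circ L_x^* = -(L_x+R_x)\circ\huaI_\omega$ for all $x \in A$. Pairing both sides with an arbitrary vector and using $\langle\huaI_\omega^{-1}z,y\rangle = \omega(z,y)$, this becomes
\[\omega(z,x\circ_A y) = \omega(x\circ_A z + z\circ_A x, y), \quad\forall x,y,z\in A,\]
which follows immediately from \eqref{quadratic-condition1} together with the skew-symmetry of $\omega$.

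The main step is to show $\llbracket r,r\rrbracket = 0$. Rather than a brute-force tensor calculation, I would invoke Theorem \ref{heart}: once $(L,L+R)$-invariance of $a$ is established, $\llbracket r,r\rrbracket = 0$ is equivalent to requiring that $r_\pm$ be Zinbiel algebra homomorphisms from $(A^*,\cdot_r)$ to $(A,\circ_A)$. Because $P$ is a Rota-Baxter operator on $(A,\circ_A)$, its descendant Zinbiel algebra $A_P = (A,\cdot_P)$ satisfies that $P:A_P \to (A,\circ_A)$ is a Zinbiel algebra homomorphism. I would transport $\cdot_P$ to $A^*$ along $\frac{1}{\lambda}\huaI_\omega$, defining
\[\xi \cdot_r \eta := \frac{1}{\lambda}\huaI_\omega^{-1}\bigl(\huaI_\omega(\xi)\cdot_P\huaI_\omega(\eta)\bigr), \quad\forall \xi,\eta\in A^*.\]
Since $r_- = P\circ(\frac{1}{\lambda}\huaI_\omega)$, the homomorphism property of $P$ gives $r_-(\xi\cdot_r\eta) = r_-(\xi)\circ_A r_-(\eta)$ directly. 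Writing $r_+ = r_- + I$ and expanding both $r_+(\xi)\circ_A r_+(\eta)$ and $r_+(\xi\cdot_r\eta) = \frac{1}{\lambda}(P+\lambda\Id)(\huaI_\omega\xi \cdot_P \huaI_\omega\eta)$ using the explicit form of $\cdot_P$ then lead to the same expression, so $r_+$ is a homomorphism as well.

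The main obstacle will be to identify this transported $\cdot_r$ with the intrinsic multiplication given by formula \eqref{Ar}. To do so, I would evaluate both expressions against an arbitrary $z \in A$, replace pairings by $\omega$ via $\langle\xi,w\rangle = \omega(\huaI_\omega\xi,w)$, and repeatedly apply the invariance \eqref{quadratic-condition1} together with skew-symmetry of $\omega$ to match them term by term. Once this identification is in place, Theorem \ref{heart} yields $\llbracket r,r\rrbracket = 0$; combined with the invertibility of $I = \huaI_\omega$ this produces the desired factorizable Zinbiel bialgebra $(A,A_r^*)$.
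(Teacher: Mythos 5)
Your proposal is correct and follows essentially the same route as the paper's own proof: computing $r_-=\frac{1}{\lambda}P\circ \huaI_{\omega}$ so that $I=\huaI_{\omega}$, verifying $(L,L+R)$-invariance via Proposition \ref{invariance2}, showing $\frac{1}{\lambda}\huaI_{\omega}$ intertwines $\cdot_r$ with the descendent product $\cdot_P$ so that $r_\pm$ become Zinbiel homomorphisms, and then invoking Theorem \ref{heart}. The only (cosmetic) difference is the direction of the key identification — you define $\cdot_r$ by transporting $\cdot_P$ and then match it with \eqref{Ar}, while the paper starts from \eqref{Ar} and derives the transport identity — which amounts to the same computation.
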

\begin{proof}
    Since $\omega$ is skew-symmetric, we have $\huaI_{\omega}=-\huaI_{\omega}^{\ast}$. By the fact that $\omega(x,Py)+\omega(Px,y)+\lambda \omega(x,y)=0$  for all $x,y\in A$, we have
    $$ \langle \huaI_{\omega}^{-1}x,Py\rangle+\langle \huaI_{\omega}^{-1}\circ P(x),y \rangle+\lambda\langle \huaI_{\omega}^{-1}x,y \rangle=0, $$
    which implies that $P^{\ast}\circ \huaI_{\omega}^{-1}+\huaI_{\omega}^{-1}\circ P+\lambda \huaI_{\omega}^{-1}=0, $ and then
    $$ \huaI_{\omega}\circ P^{\ast}+P\circ \huaI_{\omega}+\lambda \huaI_{\omega}=0.$$
Thus we have
    $$ r_{-}:=r_{+}^{\ast}=\frac{1}{\lambda}(-\huaI_{\omega}\circ P^{\ast}-\lambda \huaI_{\omega})=\frac{1}{\lambda} P\circ \huaI_{\omega}, $$
    and $\huaI_{\omega}=r_{+}-r_{-}$. Define a multiplication $\cdot_r$ on $A^{\ast}$ by
    $$\xi \cdot_{r} \eta= -(L_{r_+(\xi)}^*+R_{r_+(\xi)}^*)\eta+R_{r_{-}(\eta)}^* \xi.$$
Now we show that the following equation holds:
    \begin{eqnarray}\label{Zinbiel alg iso}
        \frac{1}{\lambda} \huaI_{\omega}(\xi \cdot_{r} \eta)=(\frac{1}{\lambda}\huaI_{\omega}\xi)\cdot_P(\frac{1}{\lambda}\huaI_{\omega}\eta).
    \end{eqnarray}
By the fact that $\omega(x\circ_A y,z)-\omega(y,x \ast_A z)=0$ for all $x,y,z\in A$, we have
$$ \langle \huaI_{\omega}^{-1}\circ L_x(y),z \rangle+\langle (L_{x}^*+R_{x}^*)\circ \huaI_{\omega}^{-1}(y),z \rangle=0, $$
which implies that  $ L_x\circ \huaI_{\omega}=-\huaI_{\omega} \circ (L_{x}^*+R_{x}^*).$ Therefore, by Proposition
\ref{invariance2}, the skew-symmetric part $a$ of $r$ is $(L,L+R)$-invariant.

On the one hand, for all $\xi,\eta\in A^*$, we have
\begin{eqnarray*}
    \huaI_{\omega}(\xi\cdot_r \eta)&=&\huaI_{\omega}\Big(-(L_{r_+(\xi)}^*+R_{r_+(\xi)}^*)\eta+R_{r_{-}(\eta)}^* \xi \Big)\\
    &=& L_{r_{+}(\xi)} \circ \huaI_{\omega}(\eta)+\huaI_{\omega}\circ R_{r_{-}(\eta)}^{\ast}(\xi)\\
    &=& L_{r_{+}(\xi)} \circ \huaI_{\omega}(\eta)+ R_{r_{-}(\eta)}\circ \huaI_{\omega}(\xi)\\
    &=&r_+(\xi)\circ_A (r_{+}(\eta)-r_{-}(\eta))+(r_{+}(\xi)-r_{-}(\xi))\circ_A r_-(\eta) \\
    &=& r_{+}(\xi)\circ_A r_{+}(\eta)-r_{-}(\xi)\circ_A r_{-}(\eta).
\end{eqnarray*}
On the other hand, we have
\begin{eqnarray*}
    &&(\huaI_{\omega}\xi)\cdot_{P}(\huaI_{\omega}\eta)\\
    &=&(P\huaI_{\omega}\xi)\circ_A(\huaI_{\omega}\eta)+(\huaI_{\omega}\xi)\circ_A(P\huaI_{\omega}\eta)
    +\lambda(\huaI_{\omega}\xi)\circ_A(\huaI_{\omega}\eta)\\
    &=& \lambda (r_{-}\xi)\circ_A(r_{+}\eta-r_{-}\eta)+\lambda(r_{+}\xi-r_{-}\xi)\circ_A (r_{-}\eta)+\lambda(r_{+}\xi-r_{-}\xi)\circ_A(r_{+}\eta-r_{-}\eta)\\
    &=& \lambda r_{+}(\xi)\circ_A r_{+}(\eta)-\lambda r_{-}(\xi)\circ_A r_{-}(\eta),
\end{eqnarray*}
which implies that  \eqref{Zinbiel alg iso} holds. Thus $\cdot_{r}$ is a Zinbiel multiplication and $\frac{1}{\lambda} \huaI_{\omega}$ is a Zinbiel algebra isomorphism from $(A^{\ast},\cdot_r)$ to $(A,\cdot_P)$.

 Finally, by the fact that $P+\lambda\Id,P:(A,\cdot_P)\longrightarrow (A,\circ_A)$ are both Zinbiel algebra homomorphisms, we deduce that
$$   r_{+}:=\frac{1}{\lambda} (P+\lambda \Id)\circ \huaI_{\omega},  \quad r_{-}:=\frac{1}{\lambda} P\circ \huaI_{\omega}:(A^{\ast},\cdot_r) \longrightarrow (A,\circ_A)$$
    are both Zinbiel algebra homomorphisms. Therefore, by Theorem \ref{heart}, we have $ \llbracket r,r\rrbracket =0$ and $(A,A_{r}^{\ast})$ is a quasi-triangular Zinbiel bialgebra. Since $\huaI_{\omega} = r_{+}-r_{-}$  is an isomorphism, the Zinbiel bialgebra $(A,A_{r}^{\ast})$ is factorizable.
\end{proof}

\emptycomment{
\section{Matched pairs, bialgebras and Manin triples of Rota-Baxter Zinbiel algebras}\label{sec:Matched-pairs-RB}

In this section, we establish  the theories of matched pairs, bialgebras and Manin triples of Rota-Baxter Zinbiel algebras systematically. In particular, we show that a factorizable Zinbiel bialgebra gives rise to a Rota-Baxter Zinbiel bialgebra, and conversely, a Rota-Baxter Zinbiel bialgebra gives rise to a factorizable Zinbiel bialgebra structure on the double space.

\subsection{Representations and matched pairs of Rota-Baxter Zinbiel algebras}

\begin{defi}
A {\bf representation of a Rota-Baxter Zinbiel algebra} $(A,\circ_A,P)$ of weight $\lambda$ on a vector space $V$ with respect to a linear transformation $T\in \End(V)$ is a representation $(\rho,\mu)$ of the Zinbiel algebra $A$ on $V$, satisfying
\begin{eqnarray*}
\rho(Px)(Tu)&=&T(\rho(Px)u+\rho(x)(Tu)+\lambda\rho(x)u),\\
\mu(Px)(Tu)&=&T(\mu(Px)u+\mu(x)(Tu)+\lambda\mu(x)u), \quad \forall x\in A, u\in V.
\end{eqnarray*}
We will denote a representation of a Rota-Baxter Zinbiel algebra by $(V,T,\rho,\mu)$.
\end{defi}

\begin{ex}
    Let $(A,\circ_A,P)$ be a Rota-Baxter Zinbiel algebra of weight $\lambda$. Then $(A,P,L,R)$ is a  representation of the Rota-Baxter Zinbiel algebra, which is called the {\bf regular representation} of $(A,\circ_A,P)$.
\end{ex}

Let $(V,T,\rho,\mu)$ be a representation of a Rota-Baxter Zinbiel algebra $(A,\circ_A,P)$ of weight $\lambda$. Since
$(\rho,\mu)$ is a representation of the Zinbiel algebra $A$ on $V$, we have the semi-direct product Zinbiel algebra $A \ltimes V.$ Then define the map
$$ P\oplus T: A\ltimes V\rightarrow A\ltimes V, \quad x+u\mapsto Px+Tu. $$

\begin{pro}
With the above notations, $(A \ltimes V,P\oplus T)$ is a Rota-Baxter Zinbiel algebra of weight $\lambda$, called the semi-direct product of $(A,\circ_A,P)$ and the representation $(V,T,\rho,\mu)$.
\end{pro}
\begin{proof}
    It follows by a direct calculation.
    \end{proof}

\begin{defi}
Let $(V,T,\rho,\mu)$ and $(V',T',\rho',\mu')$ be two representations of a Rota-Baxter Zinbiel algebra $(A,\circ_A,P)$ of weight $\lambda$. A {\bf homomorphism} from $(V,T,\rho,\mu)$ to $(V',T',\rho',\mu')$ is a linear map $\phi:V\longrightarrow V'$ such that
\begin{eqnarray*}
\phi\circ \rho(x)&=&\rho'(x)\circ \phi,\quad \forall x\in A,\\
\phi\circ \mu(x)&=&\mu'(x)\circ \phi,\quad \forall x\in A,\\
\phi\circ T&=&T' \circ \phi.
\end{eqnarray*}
\end{defi}

Rota-Baxter Zinbiel algebras of weight $\lambda$ also have coregular representations.
\begin{thm}\label{coregular representation}
Let $(A,\circ_A,P)$ be a Rota-Baxter Zinbiel algebra of weight $\lambda$. Then $$(A^{\ast},-\lambda\Id-P^{\ast},-L^{\ast}-R^{\ast},R^{\ast})$$
is a representation, which is called the {\bf coregular representation} of $(A,\circ_A,P)$.

Moreover, if $(A,P,\omega)$ is a quadratic Rota-Baxter Zinbiel algebra of weight $\lambda$, then the linear map $\omega^{\sharp}:A \longrightarrow A^{\ast} $ defined by $\langle \omega^{\sharp}(x),y \rangle=\omega(x,y) $ is an isomorphism   from the regular representation $(A,P,L,R)$ to the coregular representation $(A^{\ast},-\lambda\Id-P^{\ast},-L^{\ast}-R^{\ast},R^{\ast})$.
\end{thm}

\begin{proof}
For all $\xi \in A^{\ast}$ and $x,y \in A$, since $P$ is a Rota-Baxter operator of weight $\lambda$ on the Zinbiel algebra $(A,\circ_A)$, we have
\begin{eqnarray*}
&&\langle -(L_{Px}^{\ast}+R_{Px}^{\ast}) (-\lambda\Id-P^{\ast})\xi-(-\lambda\Id-P^{\ast})
\Big(-(L_{Px}^{\ast}+R_{Px}^{\ast})\xi-(L_{x}^{\ast}+R_{x}^{\ast})(-\lambda\Id-P^{\ast})\xi\\
&&-\lambda(L_{x}^{\ast}+R_{x}^{\ast}) \xi   \Big),y \rangle\\
&=&\langle (-\lambda\Id-P^{\ast})\xi,(Px) \ast_A y \rangle+\lambda\langle \xi,(Px) \ast_A y\rangle+\langle \xi,(Px) \ast_A (Py) \rangle+\lambda \langle(-\lambda\Id-P^{\ast})\xi,x \ast_A y \rangle\\
&&+\langle(-\lambda\Id-P^{\ast})\xi,x \ast_A P(y) \rangle+\lambda^2\langle\xi, x\ast_A y\rangle+\lambda\langle \xi,x\ast_A (Py)\rangle\\
&=& \langle  \xi,-P(Px \ast_A y)+\lambda (Px) \ast_A y- \lambda P(x \ast_A y)-P(x \ast_A Py) \rangle\\
&=&0,
\end{eqnarray*}
and
\begin{eqnarray*}
&& \langle R_{Px}^{\ast}(-\lambda \Id-P^*)\xi-(-\lambda\Id-P^*) \Big(R_{Px}^*\xi+R_{x}(-\lambda \Id-P^*)\xi+\lambda R_x^*\xi \Big),y\rangle\\
&=& \langle(\lambda\Id+P^*)\xi,y\circ_A (Px)\rangle-\lambda\langle \xi,y\circ_A (Px)\rangle-\lambda\langle \xi,(Py)\circ_A (Px) \rangle-\lambda\langle (-\lambda\Id-P^*)\xi,y\circ_A x \rangle\\
&&-\langle (-\lambda\Id-P^*) \xi,(Py)\circ_A  x\rangle-\lambda^2\langle\xi,y\circ_A x \rangle-\lambda\langle\xi,(Py)\circ_A x\rangle\\
&=& \langle \xi,P(y\circ_A Px)-(Py)\circ_A (Px)+\lambda P(y\circ_A x)+P(Py\circ_A x) \rangle\\
&=& 0.
\end{eqnarray*}
Therefore, $(A^{\ast},-\lambda\Id-P^{\ast},-L^{\ast}-R^{\ast},R^{\ast})$ is a representation.

Let $(A,\circ_A,P,\omega)$ be a quadratic Rota-Baxter Zinbiel algebra of weight $\lambda$. By \eqref{quadratic-condition1} and the fact that $\omega$ is skew-symmetric, we have
\begin{eqnarray*}
   \omega^{\sharp}\circ L_x&=&-(L_x^{\ast}+R_x^{\ast}) \circ \omega^{\sharp},\\
   \omega^{\sharp}\circ R_x&=&R_x^{\ast} \circ \omega^{\sharp}.
\end{eqnarray*}
By \eqref{compatibility condition}, we have
$$ \omega^{\sharp}\circ P=(-\lambda\Id-P^{\ast}) \circ \omega^{\sharp}. $$
Note that $ \omega^{\sharp}$ is a linear isomorphism. Therefore, $ \omega^{\sharp}:A\longrightarrow A^*$ is an isomorphism from the regular representation $(A,P,L,R)$ to the coregular representation $(A^{\ast},-\lambda\Id-P^{\ast},-L^{\ast}-R^{\ast},R^{\ast}).$
\end{proof}

In the sequel, we introduce the notion of a matched pair of Rota-Baxter Zinbiel algebras.

\begin{defi}
A {\bf matched pair of Rota-Baxter Zinbiel algebras} of weight $\lambda$ consists of a pair of Rota-Baxter Zinbiel algebras $((A,P),(B,Q))$ of weight $\lambda$, a representation $(\rho,\mu)$ of the Rota-Baxter Zinbiel algebra $(A,P)$ on $(B,Q)$ and a representation $(\rho',\mu')$ of the Rota-Baxter Zinbiel algebra $(B,Q)$ on $(A,P)$, such that $(A,B;(\rho,\mu),(\rho',\mu'))$ is a matched pair of Zinbiel algebras.
\end{defi}
We denote a matched pair of Rota-Baxter Zinbiel algebras  by $((A,P),(B,Q);(\rho,\mu),(\rho',\mu'))$, or simply by $((A,P),(B,Q)). $

\begin{pro}\label{pro:equivalence MPRB}
Let $(A,P)$ and $(B,Q)$ be Rota-Baxter Zinbiel algebras of weight $\lambda$, $(\rho,\mu)$ a representation of the Zinbiel algebra $A$ on $B$ and $(\rho',\mu')$ a
representation of the Zinbiel algebra $B$ on $A$. Then $((A,P),(B,Q);(\rho,\mu),(\rho',\mu'))$ is a matched pair of Rota-Baxter Zinbiel algebras if and only if $(A,B;(\rho,\mu),(\rho',\mu'))$ is a matched pair of Zinbiel algebras and the following equalities hold:
\begin{eqnarray}
\label{1}\rho(Px)(Qu)&=&Q(\rho(Px)u+\rho(x)(Qu)+\lambda\rho(x)u);\\
\label{2}\mu(Px)(Qu)&=&Q(\mu(Px)u+\mu(x)(Qu)+\lambda\mu(x)u);\\
\label{3}\rho'(Cu)(Px)&=&P(\rho'(Qu)x+\rho'(u)(Px)+\lambda\rho'(u)x);\\
\label{4}\mu'(Cu)(Px)&=&P(\mu'(Qu)x+\mu'(u)(Px)+\lambda\mu'(u)x),
\end{eqnarray}
for all $x\in A$ and $u \in B$.
\end{pro}
\begin{proof}
    It follows by a direct calculation.
\end{proof}

The following conclusion shows that
there exists a Rota-Baxter Zinbiel algebra structure coming from a
matched pair of Rota-Baxter Zinbiel algebras.

\begin{pro}\label{descendent}
Let $(A,P)$ and $(B,Q)$ be Rota-Baxter Zinbiel algebras of
weight $\lambda$, $\rho,\mu:A \longrightarrow \End(B)$ and
$\rho',\mu':B \longrightarrow \End(A)$ be linear maps. Then
$((A,P),(B,Q);(\rho,\mu),(\rho',\mu'))$ is a matched pair of
Rota-Baxter Zinbiel algebras of weight $\lambda$ if and only if
there is a Rota-Baxter Zinbiel algebra structure of weight $\lambda$ on
$A\oplus B$, where the Zinbiel algebra structure is defined by
 \eqref{mp-zinbiel}  and the Rota-Baxter operator is defined by
$$ P\oplus Q:A\oplus B  \longrightarrow A\oplus B, \quad x+u \mapsto Px+Qu. $$
\end{pro}

\begin{proof}
Let $((A,P),(B,Q);(\rho,\mu),(\rho',\mu'))$ be a matched pair of Rota-Baxter Zinbiel algebras of weight $\lambda$. Then $(A,B;(\rho,\mu),(\rho',\mu'))$ is a matched pair of Zinbiel algebras. By \eqref{1}-\eqref{4}, for all $x\in
A, u\in B$,  we have
\begin{eqnarray*}
    P(x) \circ_{\bowtie} Q(u)&=&(P\oplus Q)(P(x)\circ_{\bowtie} u+x\circ_{\bowtie} Q(u)+\lambda x\circ_{\bowtie} u),  \\
    Q(u)   \circ_{\bowtie} P(x)&=&(P\oplus Q)(Q(u)\circ_{\bowtie} x+  u\circ_{\bowtie} P(x)  +\lambda u \circ_{\bowtie} x),
\end{eqnarray*}
where $\circ_{\bowtie}$ is the Zinbiel multiplication on the double Zinbiel algebra $A\bowtie B$. Therefore, $ P\oplus Q$ is a Rota-Baxter operator of weight $\lambda$ on the Zinbiel algebra $A\bowtie B$.

The converse part can be proved similarly and we omit it.
\end{proof}

Let $((A,P),(B,Q);(\rho,\mu),(\rho',\mu'))$ be a matched pair of Rota-Baxter Zinbiel algebras of weight $\lambda$. Then there are three descendent Zinbiel algebras $A_P,B_Q$ and $(A\bowtie B)_{P\oplus Q} $ coming from the three Rota-Baxter operators $P:A\longrightarrow A, Q:B\longrightarrow B$ and $P\oplus Q:A \oplus B \longrightarrow A \oplus B $ of weight $\lambda$, respectively.
\begin{pro}\label{descendent mp}
Let $((A,P),(B,Q);(\rho,\mu),(\rho',\mu'))$ be a matched pair of Rota-Baxter Zinbiel algebras of weight $\lambda$. Then $(A_P,B_Q;(\rho_{(P,Q)},\mu_{(P,Q)}),(\rho'_{(P,Q)},\mu'_{(P,Q)}))$ is a matched pair of Zinbiel algebras, where $(\rho_{(P,Q)},\mu_{(P,Q)})$ and $(\rho'_{(P,Q)},\mu'_{(P,Q)})$ are given by
\begin{eqnarray}
\rho_{(P,Q)}(x)u &=&\rho(Px)u+\rho(x)(Qu)+\lambda \rho(x)u,\\
\mu_{(P,Q)}(x)u &=&\mu(x)(Qu)+\mu(Px)u+\lambda \mu(x)u,\\
\rho'_{(P,Q)}(u)x &=&\rho'(Qu)x+\rho'(u)(Px)+\lambda \rho'(u)x,\\
\mu'_{(P,Q)}(u)x &=&\mu'(Qu)x+\mu'(u)(Px)+\lambda \mu'(u)x,
\end{eqnarray}
for all $x\in A, u\in B$. Moreover, we have
$$A_{P}\bowtie B_{Q}=(A\bowtie B)_{P\oplus Q},$$
as Zinbiel algebras. The matched pair $(A_P,B_Q;(\rho_{(P,Q)},\mu_{(P,Q)}),(\rho'_{(P,Q)},\mu'_{(P,Q)}))$ is called the {\bf descendent matched pair of Zinbiel algebras}.
\end{pro}
\begin{proof}
By Proposition \ref{descendent}, there is a descendent Zinbiel algebra on $A \oplus B$, denoted by $(A \bowtie B)_{P\oplus Q}$. It is obvious that $A_P$ and $B_Q$ are its Zinbiel subalgebras. Furthermore, for all $x\in A$ and $u\in B$, we have
\begin{eqnarray*}
x\cdot_{(A \bowtie B)_{P\oplus Q}} u &=& (Px)\circ_{\bowtie} u+x \circ_{\bowtie} (Qu)+\lambda x \circ_{\bowtie} u\\
&=& \rho(Px)u+\mu'(u)(Px)+\rho(x)(Qu)+\mu'(Qu)x+\lambda\rho(x)u+\lambda\mu'(u)x\\
&=& \rho_{(P,Q)}(x)u+\mu'_{(P,Q)}(u)x,\\
u \cdot_{(A \bowtie B)_{P\oplus Q}} x &=& (Qu)\circ_{\bowtie} x+u \circ_{\bowtie} (Px)+\lambda u \circ_{\bowtie} x\\
&=& \rho'(Qu)x+\mu(x)(Qu)+\rho'(u)(Px)+\mu(Px)u+\lambda\rho'(u)x+\lambda\mu(x)u\\
&=& \rho'_{(P,Q)}(u)x+\mu_{(P,Q)}(x)u.
\end{eqnarray*}
Then by Proposition \ref{mp-equ}, $(A_P,B_Q;(\rho_{(P,Q)},\mu_{(P,Q)}),(\rho'_{(P,Q)},\mu'_{(P,Q)}))$ forms a matched pair. Moreover, the induced Zinbiel algebra on $A_P \bowtie B_Q$ coincides with $(A \bowtie B)_{P\oplus Q}.$
\end{proof}

\begin{thm}
Let $((A,P),(B,Q);(\rho,\mu),(\rho',\mu'))$ be a matched pair of Rota-Baxter Zinbiel algebras of weight $\lambda$. Then $((A_P,P),(B_Q,Q);(\rho_{(P,Q)},\mu_{(P,Q)}),(\rho'_{(P,Q)},\mu'_{(P,Q)}))$ is a matched pair of Rota-Baxter Zinbiel algebras of weight $\lambda$, which is called the {\bf descendent matched pair of Rota-Baxter Zinbiel algebras of weight $\lambda$}.
\end{thm}
\begin{proof}
By Proposition \ref{descendent mp}, $(A_P,B_Q;(\rho_{(P,Q)},\mu_{(P,Q)}),(\rho'_{(P,Q)},\mu'_{(P,Q)}))$ is a matched pair of Zinbiel algebras. It is straightforward to check that the Rota-Baxter operator $P\oplus Q$ of weight $\lambda$ on the Zinbiel algebra $A \bowtie B$ is also the
Rota-Baxter operator on the descendent Zinbiel algebra $A_{P}\bowtie B_{Q}$ of  weight $\lambda$. By Proposition \ref{descendent}, $((A_P,P),(B_Q,Q);(\rho_{(P,Q)},\mu_{(P,Q)}),(\rho'_{(P,Q)},\mu'_{(P,Q)}))$ is a matched pair of Rota-Baxter Zinbiel algebras of weight $\lambda$.
\end{proof}

\subsection{Rota-Baxter Zinbiel bialgebras and Manin triples of Rota-Baxter Zinbiel algebras}

First we introduce the notion of a Rota-Baxter Zinbiel bialgebra.

\begin{defi}
A {\bf Rota-Baxter operator of weight $\lambda$ on a Zinbiel bialgebra $(A,A^{\ast})$  } is a linear map $P: A \longrightarrow A$ such that
\begin{itemize}
\item[{\rm(i)}] $P$ is a Rota-Baxter operator of weight $\lambda$ on $A$;
\item[{\rm(ii)}] $\widetilde{P^{\ast}}:= -\lambda\Id -P^{\ast}$ is a Rota-Baxter operator of weight $\lambda$ on $A^{\ast}$.
\end{itemize}
A Zinbiel bialgebra with a Rota-Baxter operator of weight $\lambda$ is called a {\bf Rota-Baxter Zinbiel bialgebra of weight $\lambda$}.

We denote a Rota-Baxter Zinbiel bialgebra of weight $\lambda$ by $(A,A^{\ast},P)$.
\end{defi}

\begin{pro}
If $P$ is a Rota-Baxter operator of weight $\lambda$ on the Zinbiel bialgebra $(A,A^{\ast})$. Then $\widetilde{P^{\ast}}$ is a Rota-Baxter operator of weight $\lambda$ on the Zinbiel bialgebra $(A^{\ast},A)$.
\end{pro}
\begin{proof}
    It follows by a direct calculation.
    \end{proof}

It is straightforward to check that $\widetilde{P^{\ast}}:= -\lambda\Id -P^{\ast}:A^{\ast} \longrightarrow A^{\ast}$ is a Rota-Baxter operator of weight $\lambda$ on $A^{\ast}$ if and only if
$P^{\ast}:A^{\ast} \longrightarrow A^{\ast}$ is a Rota-Baxter operator of weight $\lambda$.
The descendent Zinbiel multiplication of $\widetilde{P^{\ast}}$ and $P^{\ast}$ on $A^{\ast}$ are related by
$$ \xi \cdot_{\widetilde{P^{\ast}}} \eta =- (P^{\ast}\xi)\cdot_{A^{\ast}} \eta -\xi \cdot_{A^{\ast}} (P^{\ast}\eta)-\lambda \xi \cdot_{A^{\ast}} \eta = - \xi \cdot_{P^{\ast}} \eta, \quad \forall \xi,\eta\in A^*. $$
The reason why we adopt $\widetilde{P^{\ast}}$ instead of $P^{\ast}$  becomes clear from the following theorem.
\begin{thm}\label{RB Zinbiel bialg}
    Let $A$ and $A^*$ be two Zinbiel algebras and $P:A\rightarrow A$ a linear map. Then
 $(A,A^{\ast},P)$ is a Rota-Baxter Zinbiel bialgebra of weight $\lambda$ if and only if $((A,P),(A^{\ast},\widetilde{P^{\ast}});$ $(-L^*-R^*,R^*),(-\huaL^*-\huaR^*,\huaR^*))$ is a matched pair of Rota-Baxter Zinbiel algebras of the same weight.
\end{thm}

\begin{proof}
By Theorem \ref{coregular representation}, $(A^{\ast},\widetilde{P^{\ast}},-\huaL^*-\huaR^*,\huaR^*)$ is a representation of the Rota-Baxter Zinbiel algebra $(A,P)$, and $(A,P,-\huaL^*-\huaR^*,\huaR^*)$ is a representation of the Rota-Baxter Zinbiel algebra $(A^{\ast},\widetilde{P^{\ast}})$. Note that  $(A,A^{\ast})$ is a Zinbiel bialgebra if and only if $(A,A^{\ast},-L^*-R^*,R^*,-\huaL^*-\huaR^*,\huaR^*)$ is a matched pair of Zinbiel algebras. Therefore, $(A,A^{\ast},P)$ is a Rota-Baxter Zinbiel bialgebra of weight $\lambda$ if and only if $((A,P),(A^{\ast},\widetilde{P^{\ast}});$ $(-L^*-R^*,R^*),(-\huaL^*-\huaR^*,\huaR^*))$ is a matched pair of Rota-Baxter Zinbiel algebras of the same weight.
\end{proof}

\begin{pro}\label{double}
Let $(A,A^{\ast},P)$ be a Rota-Baxter Zinbiel bialgebra of weight $\lambda$. Then $(\frkd,\frkd_r^{\ast},\huaP)$ is a Rota-Baxter Zinbiel bialgebra of weight $\lambda$, where $(\frkd,\frkd_r^{\ast})$ is the Zinbiel bialgebra given by Theorem \ref{Z-double-fac}, and $\huaP:\frkd \longrightarrow \frkd $ is the linear map defined by
\begin{eqnarray}\label{huaP}
\huaP(x+\xi)=Px-\lambda\xi-P^{\ast}\xi,\quad \forall x\in A,\xi\in A^{\ast}.
\end{eqnarray}
\end{pro}

\begin{proof}
By Theorem \ref{RB Zinbiel bialg}, $((A,P),(A^{\ast},\widetilde{P^{\ast}});(-L^*-R^*,R^*),(-\huaL^*-\huaR^*,\huaR^*))$ is a matched pair of Rota-Baxter Zinbiel algebras. By Proposition \ref{descendent}, the linear map $\huaP$ is a Rota-Baxter operator of weight $\lambda$ on the double Zinbiel algebra $\frkd =A\bowtie A^{\ast}$.

Since $\widetilde{P^{\ast}}=-\lambda\Id -P^{\ast}$ is a Rota-Baxter operator of weight $\lambda$ on $A^{\ast}$, it is also a Rota-Baxter operator on the Zinbiel algebra $A^{\ast}$. By \eqref{frkd-r}, it is straightforward to see that $-\lambda\Id-\huaP^{\ast}:\frkd_r^{\ast} \longrightarrow \frkd_r^{\ast} $, via $ \xi+x \mapsto -\lambda\xi-P^{\ast}\xi+Px $, is a Rota-Baxter operator of weight $\lambda$ on the dual Zinbiel algebra $\frkd^{\ast}_r$.
Consequently, $(\frkd,\frkd_r^{\ast},\huaP)$ is a Rota-Baxter
Zinbiel bialgebra of weight $\lambda$.
\end{proof}

\begin{ex}{\rm
Let $(A,A^{\ast})$ be a Zinbiel bialgebra. Then $(A,A^{\ast},P)$ is a Rota-Baxter Zinbiel bialgebra of  weight $\lambda$, where the linear map $P:A \longrightarrow A$ is defined by $P(x)=-\lambda x$.
}
\end{ex}

\begin{ex}{\rm
Consider the Zinbiel bialgebra $(\frkd,\frkd_r^{\ast})$ given in Theorem \ref{Z-double-fac}, where
$\frkd =A\bowtie A^{\ast}$. Then the linear map
$$ P:A\bowtie A^{\ast} \longmapsto A\bowtie A^{\ast}, \quad x+\xi \mapsto -\lambda x,$$
is a Rota-Baxter operator of weight $\lambda$ on the Zinbiel algebra $\frkd$. Furthermore, we have $$ (-\lambda\Id-P^{\ast})(x+\xi)=-\lambda x,$$
which implies that $-\lambda\Id-P^{\ast}$ is also a Rota-Baxter operator of weight $\lambda$ on the dual Zinbiel algebra $\frkd_r^{\ast}$. Therefore, $(\frkd,\frkd_r^{\ast},P)$ is a Rota-Baxter Zinbiel bialgebra of  weight $\lambda$.
}
\end{ex}

Now we introduce the notion of Manin triples of Rota-Baxter Zinbiel algebras of weight $\lambda$ using quadratic Rota-Baxter Zinbiel algebras of weight $\lambda$ given in Definition \ref{quadratic RB Zinbiel algebra}.

\begin{defi}
A {\bf Manin triple of Rota-Baxter Zinbiel algebras of weight $\lambda$} consists of a triple $((\huaA,\huaP,\omega),(A,P),(B,Q))$, where $(\huaA,\huaP,\omega)$ is an even dimensional quadratic Rota-Baxter Zinbiel algebra of weight $\lambda$, $(A,P)$ and $(B,Q)$ are Rota-Baxter Zinbiel algebras of weight $\lambda$ such that
\begin{itemize}
\item[{\rm(i)}] $(A,P)$ and $(B,Q)$ are Rota-Baxter Zinbiel subalgebras, i.e. $A$ and $B$ are Zinbiel subalgebras of $\huaA$ and $\huaP|_{A}=P,\huaP|_{B}=Q$;
\item[{\rm(ii)}] $\huaA=A \oplus B$ as vector spaces;
\item[{\rm(iii)}] both $A$ and $B$ are isotropic with respect to the nondegenerate invariant skew-symmetric bilinear form $\omega$.
\end{itemize}
\end{defi}

Similar to the classical case, we have the following result.

\begin{thm}\label{thm:bi-manin}
There is a one-to-one correspondence between Manin triples of Rota-Baxter Zinbiel algebras of weight $\lambda$ and Rota-Baxter Zinbiel bialgebras of the same weight.
\end{thm}

\begin{proof}
Let $(A,A^{\ast},P)$ be a Rota-Baxter Zinbiel bialgebra. Then $(A,P)$ and $(A^{\ast},\widetilde{P^{\ast}})$ are Rota-Baxter Zinbiel algebras of weight $\lambda$. By Proposition \ref{double}, $(A\bowtie A^{\ast},\huaP)$ is a Rota-Baxter Zinbiel algebra of weight $\lambda$, where $\huaP$ is given by \eqref{huaP}. Moreover, since $\omega$ is given by \eqref{natural-bilinear-form}, we have
\begin{eqnarray*}
&&\omega(x+\xi,\huaP(y+\eta))+\omega(\huaP(x+\xi),y+\eta)+\lambda \omega(x+\xi,y+\eta)\\
&=& \langle \xi,Py \rangle+\langle (\lambda\Id+P^{\ast})\eta,x \rangle-\langle (\lambda\Id+P^{\ast})\xi,y \rangle-\langle \eta,Px \rangle+\lambda \langle \xi,y \rangle-\lambda \langle \eta,x \rangle\\
&=& 0,
\end{eqnarray*}
which implies that $(A \bowtie A^{\ast},\huaP,\omega)$ is a quadratic Rota-Baxter Zinbiel algebra of weight $\lambda$.
Consequently, $((A\bowtie A^{\ast},\huaP,\omega),(A,P),(A^{\ast},-\lambda\Id-P^{\ast}))$ is a Manin triple of Rota-Baxter Zinbiel algebras of weight $\lambda$.

Conversely, let $((\huaA=A\bowtie B,\huaP,\omega),(A,P),(B,Q))$  be a Manin triple of Rota-Baxter Zinbiel algebras of weight $\lambda$. Then similar to the classical argument, first we can identify $B$ with $A^{\ast}$ by using the nondegenerate invariant skew-symmetric bilinear form $\omega$, we can obtain a Zinbiel bialgebra $(A,A^{\ast})$. Then we can identify $Q$ with
$-\lambda\Id-P^{\ast}$ by \eqref{compatibility condition}. Consequently, both $(A,P)$ and $(A^{\ast},-\lambda\Id-P^{\ast})$ are Rota-Baxter Zinbiel algebras of weight $\lambda$, i.e.
$(A,A^{\ast},P)$ is a Rota-Baxter Zinbiel bialgebra of weight $\lambda$.
\end{proof}

\subsection{Rota-Baxter Zinbiel bialgebras and factorizable Zinbiel bialgebras}

Factorizable Zinbiel bialgebras can give rise to  Rota-Baxter Zinbiel bialgebras.
\begin{thm}\label{thm:fac-RBbi}
Let $(A,A_{r}^{\ast})$ be a factorizable Zinbiel bialgebra with $I=r_{+}-r_{-}$. Then $(A,A_{r}^{\ast},P)$ is a Rota-Baxter Zinbiel bialgebra of weight $\lambda$, where $P$ is given by \eqref{P}.
\end{thm}
\begin{proof}
It is obvious that $\widetilde{P^{\ast}}=-\lambda\Id-P^{\ast}=\lambda I^{-1}\circ r_{-}.$ Moreover, by the facts that $r_{-}:A_{r}^{\ast}\longrightarrow A$ is a Zinbiel algebra homomorphism and
$\lambda I^{-1}:(A,\cdot_P)\longrightarrow(A_{r}^{\ast},\cdot_r)$ is a Zinbiel algebra isomorphism, for all $\xi,\eta\in A^*$, we have
\begin{eqnarray*}
&&\widetilde{P^{\ast}}\Big(\widetilde{P^{\ast}}\xi \cdot_{r} \eta+\xi \cdot_{r} \widetilde{P^{\ast}}\eta +\lambda \xi \cdot_{r} \eta \Big)\\
&=& \lambda I^{-1} r_{-}\Big((\lambda I^{-1} r_{-}\xi)\cdot_{r} \eta+\xi \cdot_{r}(\lambda I^{-1} r_{-}\eta)+\lambda \xi \cdot_{r} \eta\Big)\\
&=& \lambda I^{-1} \Big( (\lambda r_{-} I^{-1} r_{-}\xi)\circ_A r_{-}\eta + r_{-}\xi  \circ_A                      (\lambda r_{-} I^{-1} r_{-}\eta) +  \lambda r_{-}\xi  \circ_A r_{-}\eta   \Big)\\
&=& \lambda I^{-1} ( r_{-}\xi  \cdot_{P} r_{-}\eta)\\
&=& (\lambda I^{-1}r_{-}\xi) \cdot_{r} (\lambda I^{-1}r_{-}\eta)\\
&=&  (\widetilde{P^{\ast}}\xi) \cdot_{r} (\widetilde{P^{\ast}}\eta),
\end{eqnarray*}
which implies that $\widetilde{P^{\ast}}$ is a Rota-Baxter
operator of weight $\lambda$ on $A_{r}^{\ast}$. Therefore,
$(A,A_{r}^{\ast},P)$ is a Rota-Baxter   Zinbiel
bialgebra of weight $\lambda$.
\end{proof}

\begin{cor}
Let $(A,A^*_r)$ be a factorizable Zinbiel bialgebra with $I=r_+-r_-$. Then there is the following commutative diagram of Zinbiel algebra homomorphisms:
\[
        \vcenter{\xymatrix{
            \cdots A^*_{\widetilde{P^*}^{k}}\ar[d]^{\frac{1}{\lambda} I}_{\cong} \ar[r]^(0.55){-\lambda \Id-P^*}\ar[dr]^{r_-} &\cdots \ar[r]^{}
            &A^*_{\widetilde{P^*}} \ar[d]^{\frac{1}{\lambda} I}_{\cong} \ar[r]^(0.55){-\lambda \Id-P^*} \ar[dr]^{r_-}&A_r^* \ar[d]^{\frac{1}{\lambda}I}_{\cong} \ar[r]^(0.55){-\lambda \Id-P^*} \ar[dr]^{r_-}&A_I^* \ar[d]^{\frac{1}{\lambda}I}_{\cong}\\ \cdots A_{P^{k+1}}\ar[r]^{P}&\cdots \ar[r]^{}&
            A_{P^2}\ar[r]^{P} &A_P \ar[r]^{P} &A,
        }}
    \]
where  $P=\lambda r_-\circ I^{-1}$ and $\widetilde{P^*}=\lambda I^{-1}\circ r_-$  and $A_{P^k}$ is the descendent Zinbiel algebra of the Rota-Baxter operator $P$ on $A_{P^{k-1}}$.
\end{cor}
\begin{proof}
By induction on $k$ $(k\geq 0)$, we show that $\frac{1}{\lambda} I: A^*_{\widetilde{P^*}^{k}}\to A_{P^{k+1}}$ is a Zinbiel algebra isomorphism.  For $k=0$,  it is obvious that $\frac{1}{\lambda} I: A^*_r\to A_P$ is a Zinbiel algebra isomorphism. Assuming the claim holds for $k-1$,
   then we have
\begin{eqnarray*}
\frac{1}{\lambda}I (\xi\cdot_{\widetilde{P^*}^{k}} \eta)&=&\frac{1}{\lambda}I\Big( (\lambda I^{-1} r_- \xi)\cdot_{\widetilde{P^*}^{k-1}} \eta+ \xi \cdot_{\widetilde{P^*}^{k-1}}  (\lambda I^{-1} r_-\eta)+\lambda \xi\cdot_{\widetilde{P^*}^{k-1}} \eta  \Big)\\
&=&  (r_-\xi) \cdot_{P^{k}} (\frac{1}{\lambda}I \eta)+(\frac{1}{\lambda} I \xi)\cdot_{P^{k}} (r_-\eta) +\lambda (\frac{1}{\lambda} I \xi)\cdot_{P^{k}} (\frac{1}{\lambda} I \eta)\\
&=&(\frac{1}{\lambda} I \xi)\cdot_{P^{k+1}} (\frac{1}{\lambda} I \eta),
\end{eqnarray*}
which implies that the claim holds for $k$.  The rest is direct.
\end{proof}

  On the other hand,  there is the following
construction of factorizable Zinbiel algebras from Rota-Baxter
Zinbiel bialgebras, supplying Theorem~\ref{thm:fac-RBbi} from the
converse side in certain sense.

\begin{cor}\label{cor:bi-fac}
Let $(A,A^{\ast},P)$ be a Rota-Baxter Zinbiel bialgebra of
weight $\lambda$.   Then
there is a factorizable Zinbiel bialgebra $(\frkd,\frkd^*_\frkr)$,
where $\frkd=A \bowtie A^{\ast}$ and $\frkr\in\otimes ^2\frkd$ is determined by
\begin{equation}\label{eq:bi-fac}
  \frkr_+(\xi,x)=\frac{1}{\lambda} (\huaP+\lambda \Id)(-x,\xi),\quad \forall x\in A,\xi\in A^*,
\end{equation}
 where
$\huaP$ is given by \eqref{huaP}.
\end{cor}

\begin{proof}
By Theorem \ref{thm:bi-manin}, we obtain a quadratic Rota-Baxter
Zinbiel algebra $(\frkd=A\bowtie A^{\ast},\huaP,\omega)$, where
$\huaP$ is given by \eqref{huaP} and $\omega$ is given by
\eqref{natural-bilinear-form}. Then by Theorem \ref{thm:QRB-facpre}, we obtain a
factorizable Zinbiel bialgebra $(\frkd,\frkd^*_\frkr)$, where
$\frkr\in\otimes ^2\frkd$ is determined by
$$
\frkr_+=\frac{1}{\lambda} (\huaP+\lambda \Id)\circ \huaI_{\omega}:\frkd^*\to\frkd.
$$
It is obvious that $\huaI_{\omega}(\xi,x)=(-x,\xi).$
\end{proof}

Let $(A,A^*_r)$ be a factorizable Zinbiel
bialgebra. By Theorem \ref{thm:fac-RBbi},  $(A,A^*_r,P)$ is a
Rota-Baxter Zinbiel bialgebra, where $P=\lambda r_-\circ I^{-1}$. By Corollary \ref{cor:bi-fac}, we obtain a
factorizable Zinbiel bialgebra $(\frkd,\frkd^*_\frkr)$, where $\frkd=A\bowtie A^{\ast}_r$ and
$\frkr\in\otimes ^2\frkd$ is determined by \eqref{eq:bi-fac}.
Straightforward computations show that
$$
\frkr_+(\xi,x)=(r_+\circ I^{-1}(x),I^{-1}\circ r_+(\xi)),\quad \forall x\in A,\xi\in A^*.
$$

On the other hand, any Zinbiel bialgebra gives rise to a factorizable Zinbiel bialgebra according to Theorem \ref{Z-double-fac}. In particular, the factorizable Zinbiel bialgebra $(A,A^*_r)$ gives the factorizable Zinbiel bialgebra $(\frkd,\frkd^*_{\bar{r}})$, where  $\bar{r}=\sum_{i} e_i\otimes e_i^{\ast}$. Obviously $\frkr$ and $\bar{r}$ are different, the former is determined by the original $r$, while the latter is not related to the original $r$. Thus, a factorizable Zinbiel bialgebra $(A,A^*_r)$ gives rise to two different factorizable Zinbiel bialgebras via the above two different approaches.
}

\end{document}